\documentclass[12pt, reqno]{amsproc}
\usepackage[utf8]{inputenc}
\usepackage[T2A]{fontenc}
\usepackage[english]{babel}
\usepackage{amsmath}
\usepackage{amsfonts}
\usepackage{amssymb}
\usepackage{amsthm}
\usepackage{enumitem}

\usepackage[svgcolors, dvipsnames]{xcolor}
\usepackage{hyperref}
\hypersetup{
    hypertexnames=false,
    colorlinks,
    linkcolor={red!50!black},
    citecolor={blue!50!black},
    urlcolor={blue!80!black}
}

\usepackage{graphicx}
\usepackage[all]{xy}
\usepackage[margin = 2.5cm]{geometry}


%
%
%

%
%
%





\newcommand\mycolor[1]{}

\setlist[enumerate]{itemsep=0.3ex, topsep=0.3ex, label={\rm(\arabic*)}}
\setlist[itemize]{itemsep=0.3ex, topsep=0.3ex, leftmargin=4ex}

\newtheorem{theorem}[subsection]{Theorem}

\newtheorem{corollary}[subsection]{Corollary}

\newtheorem{definition}[subsection]{Definition}

\newtheorem{subtheorem}[subsubsection]{Theorem}
\newtheorem{sublemma}[subsubsection]{Lemma}

\newtheorem{subcorollary}[subsubsection]{Corollary}

\newtheorem{subremark}[subsubsection]{Remark}
\newtheorem{subexample}[subsubsection]{Example}

\newtheorem{subproblem}[subsection]{Problem}

\makeatletter
\@addtoreset{subsection}{section}
\@addtoreset{equation}{section}
\@addtoreset{figure}{section}
\@addtoreset{table}{section}
\makeatother


\makeatletter
\newcommand\testshape{family=\f@family; series=\f@series; shape=\f@shape.}
\def\myemphInternal#1{\if n\f@shape%
\begingroup\itshape #1\endgroup\/%
\else\begingroup\sf\itshape\small #1\endgroup%
\fi}
\def\myemph{\futurelet\testchar\MaybeOptArgmyemph}
\def\MaybeOptArgmyemph{\ifx[\testchar \let\next\OptArgmyemph
                 \else \let\next\NoOptArgmyemph \fi \next}
\def\OptArgmyemph[#1]#2{\index{#1}\myemphInternal{#2}}
\def\NoOptArgmyemph#1{\myemphInternal{#1}}
\makeatother

\newcommand\term[2][\empty]{\myemph[#1]{#2}}

\newcommand\monoArrow{\lhook\joinrel\rightarrow}

\newcommand\epiArrow{\rightarrow\!\!\!\!\!\to}



\newcommand\Aman{A}

\newcommand\Mman{M}
\newcommand\Nman{N}

\newcommand\Uman{U}
\newcommand\Vman{V}
\newcommand\Wman{W}
\newcommand\Xman{X}

\newcommand\Agrp{A}

\newcommand\Cgrp{C}
\newcommand\Dgrp{D}

\newcommand\Hgrp{H}

\newcommand\Kgrp{K}

 %


\newcommand\bD{\mathbb{D}}
\newcommand\bN{\mathbb{N}}

\newcommand\bR{\mathbb{R}}
\newcommand\bZ{\mathbb{Z}}

\newcommand\id{\mathrm{id}}          
\newcommand\Int{\mathrm{Int}}        
\newcommand\Cl[1]{\overline{#1}}     
\newcommand\eps{\varepsilon}                   

\newcommand\restr[2]{#1\vert_{#2}}
\newcommand\GL{\mathrm{GL}}

\newcommand\Aut{\mathrm{Aut}}       
\newcommand\Diff{\mathcal{D}}       
\newcommand\Homeo{\mathcal{H}}      
 

\newcommand\Cr[1]{\mathcal{C}^{#1}}
\newcommand\Cinfty{\mathcal{C}^{\infty}}




\newcommand\fixsymbol{\mathrm{fix}}
\newcommand\invsymbol{}
\newcommand\nbsymbol{\mathrm{nb}}
\newcommand\folsymbol{*}


\newcommand\DiffInv[3][\empty]{\Diff_{\invsymbol}(#2,#3\ifx\empty #1\relax\else,#1\fi)}

\newcommand\DiffFix[3][\empty]{\Diff_{\fixsymbol}(#2,#3\ifx\empty #1\relax\else,#1\fi)}

\newcommand\DiffNb[3][\empty]{\Diff_{\nbsymbol}(#2,#3\ifx\empty #1\relax\else,#1\fi)}

\newcommand\DiffHFix[3][\empty]{\Diff^{0}_{\fixsymbol}(#2,#3\ifx\empty #1\relax\else,#1\fi)}

\newcommand\DiffHNb[3][\empty]{\Diff^{0}_{\nbsymbol}(#2,#3\ifx\empty #1\relax\else,#1\fi)}

\newcommand\DiffPlusFix[3][\empty]{\Diff^{+}_{\fixsymbol}(#2,#3\ifx\empty #1\relax\else,#1\fi)}

\newcommand\FDiff[2][\empty]{\Diff^{\folsymbol}(#2\ifx\empty #1\relax\else,#1\fi)}
\newcommand\FDiffFix[2][\empty]{\Diff^{\folsymbol}_{\fixsymbol}(#2\ifx\empty #1\relax\else,#1\fi)}
\newcommand\FDiffA[2][\empty]{\Diff^{=}(#2\ifx\empty #1\relax\else,#1\fi)}

\newcommand\VBAut[2][\empty]{\GL(#2\ifx\empty #1\relax\else,#1\fi)}

\newcommand\DiffLP{\Diff}  
\newcommand\DiffLPInv[3][\empty]{\DiffLP_{inv}(#2,#3\ifx\empty#1\relax\else,#1\fi)}

\newcommand\DiffLPFix[3][\empty]{\DiffLP_{fix}(#2,#3\ifx\empty#1\relax\else,#1\fi)}

\newcommand\DiffLPNb[3][\empty]{\DiffLP_{nb}(#2,#3\ifx\empty#1\relax\else,#1\fi)}


\newcommand\func{f}
\newcommand\gfunc{g}
\newcommand\dif{h}
\newcommand\gdif{g}
\newcommand\kdif{k}
\newcommand\pdif{p}
\newcommand\qdif{q}


\newcommand\px{x}
\newcommand\py{y}



\newcommand\kk{k}
\newcommand\Ck{\Cr{\kk}}

\newcommand\CrAB[5]{\mathcal{C}^{#1}_{#2,#3}(#4,#5)}
\newcommand\DrAB[5]{\Diff^{#1}_{#2,#3}(#4,#5)}
\newcommand\DrA[3]{\Diff^{#1}_{#2}(#3)}

\newcommand\CkABMN{\CrAB{\kk}{\Ustruct}{\Vstruct}{\Mman}{\Nman}}
\newcommand\DiffkABMN{\DrAB{\kk}{\Ustruct}{\Vstruct}{\Mman}{\Nman}}
\newcommand\DiffkAM{\DrA{\kk}{\Ustruct}{\Mman}}

\newcommand\attt[1]{\bar{#1}}

\newcommand\Uset{U}
\newcommand\Vset{V}
\newcommand\Wset{W}
\newcommand\Yset{Y}

\newcommand\Umap{\varphi}
\newcommand\Vmap{\psi}
\newcommand\Wmap{\sigma}
\newcommand\Ymap{\xi}

\newcommand\Uchr{\mathsf{u}}
\newcommand\Vchr{\mathsf{v}}
\newcommand\Wchr{\mathsf{w}}
\newcommand\Ychr{\mathsf{y}}

\newcommand\tUmap{\attt{\Umap}}
\newcommand\tVmap{\attt{\Vmap}}

\newcommand\tUchr{\attt{\Uchr}}

\newcommand\ui{i}
\newcommand\vi{j}

\newcommand\Uchrii[1]{\Uchr_{#1}}
\newcommand\Vchrii[1]{\Vchr_{#1}}
\newcommand\Wchrii[1]{\Wchr_{#1}}

\newcommand\Usetii[1]{\Uset_{#1}}
\newcommand\Vsetii[1]{\Vset_{#1}}
\newcommand\Wsetii[1]{\Wset_{#1}}

\newcommand\Umapii[1]{\Umap_{#1}}
\newcommand\Vmapii[1]{\Vmap_{#1}}
\newcommand\Wmapii[1]{\Wmap_{#1}}

\newcommand\Vchri{\Vchrii{\vi}}

\newcommand\Useti{\Usetii{\ui}}
\newcommand\Vseti{\Vsetii{\vi}}

\newcommand\Umapi{\Umapii{\ui}}
\newcommand\Vmapi{\Vmapii{\vi}}

\newcommand\UInd{\Lambda}

\newcommand\UAtlas{\mathsf{A}}
\newcommand\VAtlas{\mathsf{B}}

\newcommand\Ustruct{\mathfrak{A}}
\newcommand\Vstruct{\mathfrak{B}}

\newcommand\UVTrMap{\gdif}

\newcommand\aii[1]{a_{#1}}
\newcommand\bii[1]{b_{#1}}
\newcommand\cii[1]{c_{#1}}
\newcommand\dii[1]{d_{#1}}

\newcommand\CanonAtlas[1]{\mathsf{Id}_{#1}}
\newcommand\CanonStr[1]{\mathcal{I}_{#1}}

\newcommand\ua{0}
\newcommand\ub{1}
\newcommand\uc{2}
\newcommand\ud{3}

\newcommand\hcl[1]{\mathsf{hcl}(#1)} 

\newcommand\DLine{\mathbb{L}}    
\newcommand\topX{\eta} 

\newcommand\pta{0}               
\newcommand\ptb{\bar{\pta}}      

\newcommand\Na{\Uset}
\newcommand\Nb{\Vset}

\newcommand\DiffLx[2][\empty]{\Diff^{#2}_{#1}(\DLine)}

\newcommand\DiffLxOrPres[1][\empty]{\DiffLx[#1]{+}}

\newcommand\DiffLxOFix[1][\empty]{\DiffLx[#1]{\mathrm{fix}}}
\newcommand\DiffLxOEx[1][\empty]{\DiffLx[#1]{\mathrm{ex}}}

\newcommand\DiffLxOrPresxOFix[1][\empty]{\DiffLx[#1]{+,\mathrm{fix}}}
\newcommand\DiffLxOrPresxOEx[1][\empty]{\DiffLx[#1]{+,\mathrm{ex}}}
\newcommand\DiffLxOrRevxOFix[1][\empty]{\DiffLx[#1]{-,\mathrm{fix}}}
\newcommand\DiffLxOrRevxOEx[1][\empty]{\DiffLx[#1]{-,\mathrm{ex}}}

\newcommand\perm{\mathsf{p}}







\newcommand\idU{\id_{u}}
\newcommand\idV{\id_{v}}

\newcommand\Rzp{\bR\setminus0}


\newcommand\dbl[3]{#1\setminus #2 \,/\, #3}


\newcommand\ael{a}
\newcommand\bel{b}
\newcommand\cel{c}
\newcommand\del{d}
\newcommand\gel{g}
\newcommand\hel{h}
\newcommand\kel{k}
\newcommand\lel{l}
\newcommand\gCHD{\dbl{\Cgrp}{\Hgrp}{\Dgrp}}

\newcommand\dbli[2]{#1\setminus #2^{\pm1} \,/\, #1}

\newcommand\trmap[1]{\gdif_{#1}}
\newcommand\Utrmap{\trmap{\UAtlas}}
\newcommand\Vtrmap{\trmap{\VAtlas}}

\newcommand\CkStructs[1]{\mathcal{C}^{\kk}(#1)}
\newcommand\IsoClassesLFix{\CkStructs{\DLine}/\HomeoLxOFix}
\newcommand\IsoClassesL{\CkStructs{\DLine}/\HomeoL}

\newcommand\trStruct[1]{\mathfrak{S}(#1)}
\newcommand\MinAtlas[1]{\mathsf{S}(#1)}

\newcommand\adif{a}
\newcommand\bdif{b}

\newcommand\qfunc{q}

\newcommand\stlin[1]{w_{#1}}

\newcommand\aHRzp{\mathcal{W}^{\kk}(\bR,0)}

\newcommand\aDRnbp{\Diff^{\kk}_{\mathrm{nb}}(\bR,0)}
\newcommand\DRzp{\Diff^{\kk}_{\mathrm{mon}}(\bR\setminus0)}


\newcommand\invol{\zeta}
\newcommand\grpInvol{\langle\invol\rangle}
\newcommand\BranchPtDLine{\{\pta,\ptb\}}
\newcommand\rmap{\mathsf{r}_{\Na}}

\newcommand\bijDL[2][\empty]{\xi^{\mathrm{#2}}_{#1}}
\newcommand\bijDLxOFix[1][\empty]{\bijDL[#1]{\mathrm{fix}}}
\newcommand\bijDLxOEx[1][\empty]{\bijDL[#1]{\mathrm{ex}}}

\newcommand\bijDfix{\bijDLxOFix[\UAtlas,\VAtlas]}
\newcommand\bijDex{\bijDLxOEx[\UAtlas,\VAtlas]}

\newcommand\inv[1]{1/#1} 

\newcommand\HRz[1][\kk]{\mathcal{W}^{#1}(\bR,0)}
\newcommand\HRzOrPres{\HRz[\kk,+]}

\newcommand\DiffRz[1][\kk]{\Diff^{#1}(\bR,0)}
\newcommand\DiffRzOrPres{\DiffRz[\kk,+]}
\newcommand\DiffRzOrRev{\DiffRz[\kk,-]}

\newcommand\JDiffRz[1][\kk]{\mathcal{E}^{#1}(\bR,0)}
\newcommand\JDiffRzOrPres[1][\kk]{\JDiffRz[\kk,+]}
\newcommand\JDiffRzOrRev[1][\kk]{\JDiffRz[\kk,-]}

\newcommand\HomeoL{\Homeo(\DLine)}
\newcommand\HomeoLx[2][\empty]{\Homeo^{#2}_{#1}(\DLine)}
\newcommand\HomeoLxOrPres[1][\empty]{\HomeoLx[#1]{+}}

\newcommand\HomeoLxOFix[1][\empty]{\HomeoLx[#1]{\mathrm{fix}}}

\newcommand\HomeoLxOrPresxOFix[1][\empty]{\HomeoLx[#1]{+,\mathrm{fix}}}

\newcommand\HomeoRz[1][\empty]{\Homeo^{#1}(\bR,0)}
\newcommand\HomeoRzOrPres{\HomeoRz[+]}
\newcommand\HomeoRzOrRev{\HomeoRz[-]}

\newcommand\ax{a}
\newcommand\bx{b}

\title[Smooth structures on the line with two origins]{Classification of differentiable structures on the non-Hausdorff line with two origins}

\author{Mykola Lysynskyi}
\address{Department of Algebra and Topology, Institute of Mathematics of NAS of Ukraine, Teresh\-chenkivska str. 3, Kyiv, 01601, Ukraine}
\email{m.lysynskyi@imath.kiev.ua}

\author{Sergiy Maksymenko}
\address{Department of Algebra and Topology, Institute of Mathematics of NAS of Ukraine, Teresh\-chenkivska str. 3, Kyiv, 01601, Ukraine}
\email{maks@imath.kiev.ua}

\keywords{Diffeomorphism, smooth structure, $1$-manifold, non-Hausdorff space, line with two origins, double cosets}
\subjclass[2000]{%
    58A05, 
    57R30
}


\begin{document}

\begin{abstract}
We classify differentiable structures on a line $\mathbb{L}$ with two origins being a non-Hausdorff but $T_1$ one-dimensional manifold obtained by ``doubling'' $0$.
For $k\in\mathbb{N}\cup\{\infty\}$ let $H$ be the group of homeomorphisms $h$ of $\mathbb{R}$ such that $h(0)=0$ and the restriction of $h$ to $\mathbb{R}\setminus0$ is a $\mathcal{C}^{k}$-diffeomorphism.
Let also $D$ be the subgroup of $H$ consisting of $\mathcal{C}^{k}$-diffeomorphisms of $\mathbb{R}$ also fixing $0$.
It is shown that there is a natural bijection between $\mathcal{C}^{k}$-structures on $\mathbb{L}$ (up to a $\mathcal{C}^{k}$-diffeomorphism fixing both origins) and double $D$-coset classes $D \setminus H / D = \{ D h D \mid h \in H\}$.

Moreover, the set of all $\mathcal{C}^{k}$-structures on $\mathbb{L}$ (up to a $\mathcal{C}^{k}$-diffeomorphism which may also exchange origins) are in one-to-one correspondence with the set of double $(D,\pm)$-coset classes $D \setminus H^{\pm} / D = \{ D h D \cup D h^{-1} D \mid h \in H\}$.

In particular, in contrast with the real line, the line with two origins $\mathbb{L}$ admits uncountably many pair-wise non-diffeomorphic $\mathcal{C}^{k}$-structures for each $k=1,2,\ldots,\infty$.
\end{abstract}
    
\maketitle


\section{Introduction}\label{sect:introduction}
The present paper is devoted to some steps toward study and classification of smooth structures on non-Hausdorff manifolds of dimension $1$.

It is well known that each Hausdorff manifold $\Mman$ of dimension $\dim\Mman\leq 3$ has a unique smooth structure up to a diffeomorphism.
For $\dim\Mman=1$ this is classical result, and for $\dim\Mman=2,3$ see Munkres~\cite[Theorem~6.3]{Munkres:AnnMath:1968} and Whitehead~\cite[Corollary~1.18]{Whitehead:AnnMath:1961}.
Also, Stallings~\cite{Stallings:PCPS:1962} shown that $\bR^{n}$ has a unique $\Cinfty$ structure for $n\geq 5$, see also~\cite[Section~10]{Ferry:GeomTop}.
On the other hand, $\bR^{4}$ has uncountably many pair-wise non-diffeomorphic $\Cinfty$ structures, see Freedman~\cite{Freedman:JDG:1982}, Donaldson~\cite[Theorem~1]{Donaldson:JDG:1983}, Freed and Uhlenbeck~\cite{FreedUhlenbeck:Inst:1991}, Gompf~\cite{Gompf:JDG:1985} and Taubes~\cite{Taubes:JDG:1987}.
It is also worth noting a result by Milnor~\cite{Milnor:AM:1956} on existence of distinct smooth structures on $7$-sphere, and also papers by Kervaire \& Milnor~\cite{KervaireMilnor:AM:1963} on the group of homotopy spheres, and Kervaire~\cite{Kervaire:CMH:1960} on $10$-manifold which does not admit any differentible structures.

On the other hand, differentiable structures of non-Hausdorff manifolds seem to be not essentially and systematically studied in the literature, see e.g.\ the book by D.~Gauld~\cite{Gauld:NonMetrManif:2014} and references therein.
For instance, Haefliger and Reeb~\cite{HaefligerReeb:EM:1957} proven existence of smooth structures on \term{non-Hausdorff letter $Y$}.
Nyikos~\cite{Nyikos:AM:1992} studied smooth structures on the \term{long line} $X$, and shown that it admits non-diffeomorphic $\Cr{\omega}$ (real analytic) structures, however it is not clear whether there are non-diffeomorphic $\Cr{\infty}$-structures on $X$.
The long line and the \term{Pr\"{u}fer manifold} are also considered in~\cite[Appendix~A]{Spivak:DG:1:1979}.
Applications of non-Hausdorff manifold to physics are discussed in~\cite{Hajicek:CMP:1971,HellerPysiakSasin:JMP:2011}.
Also, in a recent papers~\cite{OConnel:TA:2023, OConnel:TA:2024} by D.~O'Connel the structure of vector bundles over non-Hausdorff manifolds are also investigated.

In the present paper we will completely classify $\Ck$-structures, $1\leq\kk\leq\infty$, on another well-known non-Hausdorff one-dimensional manifold, called \term{line with two origins} $\DLine$, and which can be described in (at least) the following three equivalent ways, see Figure~\ref{fig:dline}:
\begin{enumerate}[leftmargin=*, itemsep=1ex]
\item
Let $R = (\bR\times 0) \sqcup (\bR\times 1)$ be a disjoint union of two copies of $\bR$.
Then $\DLine$ is a quotient space of $R$ obtained by identifying points $(t,0)$ with $(t,1)$ for $t \ne 0$.

\item
Let $\tau$ be the standard topology on $\bR$.
Then $\DLine = \bR\sqcup\ptb$ is a disjoint union of $\bR$ with some point $\ptb$ endowed with the following topology:
\[
    \topX = \tau \cup \{  (\Wset\setminus\pta)\cup\{\ptb\} \mid \pta\in\Wset\in\tau  \},
\]
whose elements are elements of $\tau$ and also open neighborhoods of $\pta$ in which $\pta$ is replaced with $\ptb$.

\item
Let $\mathcal{F}$ be the partition of $\bR^2\setminus0$ into connected components of the intersections of $\bR^2\setminus0$ with vertical lines $\{x=c\}_{c\in\bR}$.
Then $\DLine$ is the space of leaves $(\bR^2\setminus0)/\mathcal{F}$ of that foliation (with the corresponding quotient topology), so that the leaves corresponding to non-distinguishable points $\pta$ and $\ptb$ are the open intervals $0\times(-\infty;0)$ and $0\times(0;+\infty)$.
\end{enumerate}

\begin{figure}[!htbp]
\includegraphics[height=4cm]{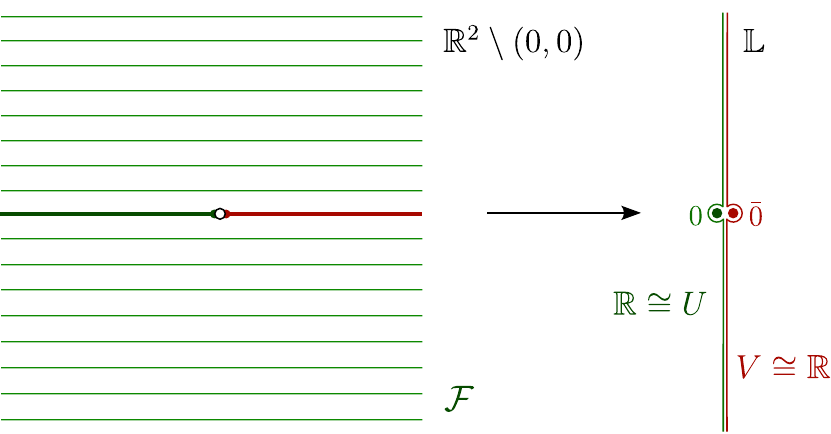}
\caption{Line with two origins $\DLine$}\label{fig:dline}
\end{figure}

It will be convenient to exploit the second model for $\DLine$.
As in Figure~\ref{fig:dline} denote
\begin{align}\label{equ:sets_U_V}
    &\Na := \bR = \DLine\setminus\ptb, &
    &\Nb := \DLine\setminus\pta = (\bR\setminus\pta)\cup\ptb.
\end{align}
\begin{sublemma}
$\DLine$ is locally Euclidean, one-dimensional, second countable, satisfies $T_1$ separation axiom, but does not satisfy $T_2$.
\end{sublemma}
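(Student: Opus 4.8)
The plan is to read off all five properties directly from the explicit description of the topology $\topX$, using the two ``charts'' $\Na$ and $\Nb$ introduced in~\eqref{equ:sets_U_V}. First I would check that $\Na$ and $\Nb$ are open and each homeomorphic to $\bR$. Openness of $\Na=\bR$ is immediate since $\tau\subseteq\topX$, while $\Nb=(\bR\setminus\pta)\cup\{\ptb\}$ is the member of $\topX$ produced from $\Wset=\bR\in\tau$ by the second clause in the definition of $\topX$; and $\Na\cup\Nb=\DLine$, so these two open sets cover $\DLine$. The subspace $\Na$ carries the standard topology by construction, so $\Na\cong\bR$. For $\Nb$ I would use the bijection $\rmap\colon\Nb\to\bR$ that is the identity on $\bR\setminus\pta$ and sends $\ptb$ to $\pta$, and verify it is a homeomorphism: intersecting a member of $\topX$ with $\Nb$ yields either an open subset of $\bR\setminus\pta$ (when the member lies in $\tau$), or a $\tau$-open neighbourhood of $\pta$ with $\pta$ deleted and $\ptb$ inserted; under $\rmap$ these are respectively the open subsets of $\bR\setminus\pta$ and the $\tau$-open neighbourhoods of $\pta$, and together they exhaust $\tau$. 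Hence $\Nb\cong\bR$ too.

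From this the metric-type properties follow formally. Since every point of $\DLine$ lies in one of the open sets $\Na,\Nb$, each homeomorphic to $\bR$, the space $\DLine$ is locally Euclidean of dimension $1$. And since $\DLine=\Na\cup\Nb$ is a finite union of open subspaces that are each second countable, the union of countable bases for $\Na$ and $\Nb$ is a countable base for $\DLine$, so $\DLine$ is second countable.

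For the $T_1$ axiom it suffices to show that every singleton is closed. Now $\DLine\setminus\{\ptb\}=\Na$ and $\DLine\setminus\{\pta\}=\Nb$ are open by the first paragraph, and for $t\in\bR\setminus\pta$ the set $\DLine\setminus\{t\}$ equals the union of the $\tau$-open set $\bR\setminus\{t\}$ with the $\topX$-open set $(\Wset\setminus\pta)\cup\{\ptb\}$, where $\Wset=(-\eps,\eps)$ for any $0<\eps<|t|$; hence it is open. To see that $T_2$ fails I would show that the two origins $\pta$ and $\ptb$ admit no disjoint neighbourhoods: any neighbourhood of $\pta$ contains an interval $(-\eps,\eps)$, and any neighbourhood of $\ptb$ contains a basic set $(\Wset'\setminus\pta)\cup\{\ptb\}$ with $\pta\in\Wset'\in\tau$, hence a punctured interval $(-\delta,\delta)\setminus\pta$; these overlap in $(-\min(\eps,\delta),\min(\eps,\delta))\setminus\pta\ne\varnothing$.

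I do not expect a genuine obstacle here; the proof is bookkeeping. The only point needing a little care is the verification that $\rmap$ is a homeomorphism, i.e. keeping track of exactly which subsets of $\Nb$ are open — in particular noticing that intersecting an arbitrary member of $\topX$ with $\Nb$ never produces a neighbourhood of $\ptb$ outside the list in the second clause of the definition of $\topX$, so that the topology of $\DLine$ away from the doubled point is genuinely just the standard topology of $\bR$.
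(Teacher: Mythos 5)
Your proof is correct and follows essentially the same route as the paper: the paper declares everything except the non-$T_2$ property trivial and proves non-Hausdorffness exactly as you do, by noting that $\Na$ and $\Nb$ witness $T_1$ while any neighbourhoods of $\pta$ and $\ptb$ must meet in a punctured interval. Your write-up simply fills in the routine verifications (openness of $\Na,\Nb$, the homeomorphism $\Nb\cong\bR$, second countability, closedness of singletons) that the paper leaves to the reader.
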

\begin{proof}
This lemma is trivial and we will only explain the non-$T_2$ property.
Note that $\Na$ is an open neighborhood of $\pta$ which does not contain $\ptb$, while $\Nb$ is an open neighborhood of $\pta$ which does not contain $\pta$.
On the other hand, by the definition of topology $\topX$, any pair of neighborhoods of $\pta$ and $\ptb$ intersects.
Thus, $\DLine$ is $T_1$ but not $T_2$.
\end{proof}

Our main result is an explicit description of
\begin{itemize}[itemsep=1ex, topsep=1ex]
\item the set of differentiable $\Ck$-structures on $\DLine$ up to a $\Ck$-diffeomorphism and
\item the sets of $\Ck$-diffeomorphisms between $\Ck$-structures, see Theorem~\ref{th:Ck-struct_on_L_simple}.
\end{itemize}
As a consequence we will show that, in contrast with the real line $\bR$, on $\DLine$ there exists uncountably many pair-wise non-diffeomorphic structures, see Theorem~\ref{th:ck_structs_examples}.
The exposition is intended to be elementary and self-contained and is aimed to be available for a large audience of readers interesting in these topics.

\subsection{Notation}\label{sect:notation}
For each $\kk\in\bN\cup\{\infty\}$ define the following groups:
\begin{itemize}[leftmargin=*, itemsep=1ex]
\item $\HomeoRz$ is the group of homeomorphisms of $\bR$ fixing $\pta$;
\item $\HomeoRzOrPres$ be its normal subgroup (of index $2$) consisting of orientation preserving homeomorphisms, and
$\HomeoRzOrRev$ be the subset consisting of homeomorphisms reversing orientation;
\item $\HRz$ is the group of homeomorphisms $\dif$ of $\bR$ such that $\dif(0)=0$ and the restriction $\restr{\dif}{\Rzp}\colon\Rzp\to\Rzp$ is a $\Ck$-diffeomorphism of $\Rzp$;
\item $\DiffRz$ is the subgroup of $\aHRzp$ consisting of $\Ck$-diffeomorphisms of $\bR$;
\item $\JDiffRz$ is the subgroup of $\DiffRz$ consisting of diffeomorphisms $\dif$ such that $\dif^{(i)}(0)=0$ for $i=2,3,\ldots,\kk$;
\item $\aDRnbp$ is the group of $\Ck$-diffeomorphisms fixed on some neighborhood of $0$;
\item also for $* \in \{\pm\}$ and $\mathcal{X} \in \{\mathcal{W}, \Diff,\mathcal{E}\} $ put $\mathcal{X}^{\kk,*}(\bR,0):=\HomeoRz[*]\cap\mathcal{X}^{\kk}(\bR,0)$.
\end{itemize}

Notice that for all $\kk'>\kk$ we have the following inclusions
\[
     \xymatrix@R=1.2em{
        \JDiffRz \ar@{^(->}[r] \ar@{^(->}[d]  & \DiffRz  \ar@{^(->}[r] \ar@{^(->}[d] & \HRz \ar@{^(->}[d] \\
        \JDiffRz[\kk'] \ar@{^(->}[r] & \DiffRz[\kk']  \ar@{^(->}[r]  & \HRz[\kk']
     }
\]
Also, if $\kk$ is finite, then every $\dif\in\JDiffRz$ is given by the formula $\dif(\px) = \tau\px + \px^{\kk}\gdif(\px)$ for some $\tau \ne 0$ and a continuous function $\gdif\colon\bR\to\bR$ being $\Ck$ on $\Rzp$.
On the other hand, if $\kk=\infty$, then $\dif(\px) = \tau\px + \gdif(\px)$ for some $\tau\ne0$ and a $\Cinfty$ function being \term{flat} at $0$, i.e.\ $\gdif^{(i)}(0)=0$ for all $i\in\bN$.
In particular, $\JDiffRz[1] = \DiffRz[1]$.

Finally note that $\aDRnbp \subset \JDiffRzOrPres$.

\begin{subexample}\label{exmp:h_in_Hrzp}\rm
For any real numbers $\alpha,\beta  \ne 0$ having the same sign, and $s,t >0$ the following homeomorphism
\[
    \dif\colon\bR\to\bR,
    \qquad
    \dif(\px) =
    \begin{cases}
        \alpha (-\px)^s, & \px<0, \\
        \beta \px^t, & \px \geq 0.
    \end{cases}
\]
belongs to $\aHRzp$.
Evidently, $\dif\in\DiffRz$ iff $\alpha=\beta$ and $t=s=1$, i.e.\ when $\dif$ is linear, and in that case it belongs even to $\JDiffRz[\infty]$.
\end{subexample}

Consider the following \term{left} action of the product $\DiffRz\times\DiffRz$ on $\aHRzp$ defined by
\[
    \mu\colon \DiffRz\times\DiffRz \times \aHRzp \to \aHRzp,
    \qquad
    \mu(a,b,\dif) = a \circ \dif \circ b^{-1}.
\]
The corresponding orbit space of $\aHRzp$ is called the \term{set of $\DiffRz$-double cosets} and denoted by
\[
    \dbl{\DiffRz}{\aHRzp}{\DiffRz}.
\]
One easily checks, see Lemma~\ref{lm:CHD_partition} below, that $\gdif,\dif\in\aHRzp$ belong to the same orbit of the above action iff $\DiffRz \, \cap \, \dif \DiffRz \gdif^{-1}  \ne \varnothing$.

Also, consider a partition of $\aHRzp$ into larger sets.
Say that $\gdif,\dif\in\aHRzp$ are $(\DiffRz,\pm)$-equivalent, if \term{either $\gdif$ or $\gdif^{-1}$ belong to the orbit of $\dif$ with respect to the above action $\mu$}.
One easily checks that this relation is an equivalence, see Section~\ref{sect:pm_double_cosets}.
The corresponding equivalence classes will be called \term{$(\DiffRz,\pm)$-double cosets}, and the set of all classes will be denoted by
\[
    \dbli{\DiffRz}{\aHRzp}.
\]
It can also be defined as the orbit set of a certain natural action of the \term{wreath product} $\DiffRz\wr\bZ_{2}$ exstending the action $\mu$, see Section~\ref{sect:pm_double_cosets}.

\subsection{Main results}
Let $\HomeoL$ be the group of all homeomorphisms of $\DLine$.
It is easy to show that every homemorphism $\dif$ of $\DLine$ either fixes or exchanges points $\pta$ and $\ptb$, see Lemma~\ref{lm:branch_pt_invariant} below.
In particular, the restriction of $\dif$ to the complement $\DLine\setminus\{\pta,\ptb\} = \Rzp$ is also invariant under $\dif$.
Moreover, $\restr{\dif}{\Rzp}\colon\Rzp\to\Rzp$ is either strictly increasing or strictlty decreasing homeomorphism.
Therefore we will also say that $\dif$ \term{preserves} orientation of $\DLine$ if $\restr{\dif}{\Rzp}$ is  increasing, and $\dif$ \term{reverses} orientation of $\DLine$ otherwise.

For $*\in\{+,-,\mathrm{fix},\mathrm{ex}\}$ denote by $\HomeoLx{*}$ the \term{subset} of $\HomeoL$ consisting of homeomorphisms respectively \term{preserving} or \term{reversing} orientation of $\DLine$, or \term{fixing} or \term{exchanging} $\pta$ and $\ptb$.
Also for $*\in\{+,-\}$ and $\bullet\in\{\mathrm{fix},\mathrm{ex}\}$ we denote $\HomeoLx{*,\bullet}=\HomeoLx{*}\cap\HomeoLx{\bullet}$.
Evidently, $\HomeoLxOFix$ and $\HomeoLxOrPres$ are normal subgroups of $\HomeoL$ of index $2$, while their intersection $\HomeoLxOrPresxOFix$ has index $4$.

Fix once and for all some $\kk\in\bN\cup\{\infty\}$.
Given two $\Ck$-structures $\Ustruct$ and $\Vstruct$ on $\DLine$ let:
\begin{itemize}[itemsep=1ex, leftmargin=*]
\item
$\DiffLx[\Ustruct,\Vstruct]{}$ be the set of all $\Ck$-diffeomorphisms of $\DLine$ from $\Ustruct$ to $\Vstruct$;

\item
$\DiffLx[\Ustruct,\Vstruct]{*} = \DiffLx[\Ustruct,\Vstruct]{} \cap \HomeoLx{*}$, where $*\in\{-,+,\mathrm{fix},\mathrm{ex}\}$;

\item
$\DiffLx[\Ustruct,\Vstruct]{*,\bullet} = \DiffLx[\Ustruct,\Vstruct]{*} \cap \DiffLx[\Ustruct,\Vstruct]{\bullet} =
\DiffLx[\Ustruct,\Vstruct]{} \cap \HomeoLx{*,\bullet}$, where $*\in\{-,+\}$, and $\bullet \in \{\mathrm{fix}, \mathrm{ex}\}$.
\end{itemize}
Notice that though we omitted $\kk$ from notations, it is implicitly determined by the notations for $\Ck$-structures $\Ustruct$ and $\Vstruct$.

If $\Ustruct=\Vstruct$, then we will abbreviate $\DiffLx[\Ustruct,\Vstruct]{*}$ and $\DiffLx[\Ustruct,\Vstruct]{*,\bullet}$ to $\DiffLx[\Ustruct]{*}$ and $\DiffLx[\Ustruct]{*,\bullet}$ respectively.
For instance, $\DiffLxOrPres[\Ustruct]$, resp.\ $\DiffLxOFix[\Ustruct]$, is the \term{group} of $\Ck$-diffeomorphisms of $\Ustruct$ preserving orientation, resp.\ fixing $\pta$ and $\ptb$, while $\DiffLxOrPresxOFix[\Ustruct]$ is their intersection.

Evidently, we have the following partition
\begin{align*}
    \DiffLx[\Ustruct,\Vstruct]{}          \ = \
    \DiffLxOrPresxOFix[\Ustruct,\Vstruct] \ \sqcup \
    \DiffLxOrRevxOFix[\Ustruct,\Vstruct]  \ \sqcup \
    \DiffLxOrPresxOEx[\Ustruct,\Vstruct]  \ \sqcup \
    \DiffLxOrRevxOEx[\Ustruct,\Vstruct],
\end{align*}
however depending on $\Ustruct$ and $\Vstruct$ some of $\DiffLx[\Ustruct,\Vstruct]{*,\bullet}$ may be empty.

The following theorem classifies $\Ck$-structures on $\DLine$.
In fact, we will prove more detailed and explicit statements, describing also sets of diffeomorphisms between $\Ck$-structures fixing or exchanging $\pta$ and $\ptb$, see Theorem~\ref{th:Ck-struct_on_L_detailed}.
\begin{subtheorem}\label{th:Ck-struct_on_L_simple}
Let $\kk\in\bN\cup\{\infty\}$.
Then
\begin{itemize}[leftmargin=*, itemsep=1ex]
\item
$\Ck$-structures on $\DLine$ \term{up to a $\Ck$-diffeomorphism} are in one-to-one correspondence with the set $\dbli{\DiffRz}{\aHRzp}$ of $(\DiffRz,\pm)$-double coset classes;
\item
while $\Ck$-structures on $\DLine$ \term{up to a $\Ck$-diffeomorphism fixing $\pta$ and $\ptb$} are in one-to-one correspondence with the set $\dbl{\DiffRz}{\aHRzp}{\DiffRz}$ of $\DiffRz$-double coset classes.
\end{itemize}
\end{subtheorem}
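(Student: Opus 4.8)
The plan is to set up an explicit dictionary between $\Ck$-structures on $\DLine$ and elements of $\aHRzp$, and then show that the equivalences on both sides match. First I would fix the reference structure: since $\Na=\DLine\setminus\ptb$ and $\Nb=\DLine\setminus\pta$ are each canonically identified with $\bR$ (via the second model for $\DLine$), any $\Ck$-structure $\Ustruct$ on $\DLine$ restricts to a $\Ck$-structure on each of $\Na\cong\bR$ and $\Nb\cong\bR$. Because every $\Ck$-structure on $\bR$ is $\Ck$-diffeomorphic to the standard one, after composing with suitable diffeomorphisms of $\DLine$ supported near one origin I can normalize $\Ustruct$ so that its restriction to $\Na$ is the \emph{standard} structure on $\bR$. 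Having done that, the restriction of $\Ustruct$ to $\Nb$ is some $\Ck$-structure on $\bR$, and the ``change of chart'' between the standard structure on $\Na\cap\Nb=\Rzp$ and the $\Ustruct$-structure on $\Nb\cap(\Na\cap\Nb)=\Rzp$ is recorded by a homeomorphism $\dif$ of $\bR$ which is a $\Ck$-diffeomorphism away from $0$ and fixes $0$, i.e. an element $\dif_{\Ustruct}\in\aHRzp$. This is the key construction: $\Ustruct\mapsto\dif_{\Ustruct}$.

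Next I would verify that this assignment is well-defined and surjective onto $\aHRzp$ modulo the $\DiffRz$-actions. Surjectivity is easy: given any $\dif\in\aHRzp$, glue the standard chart on $\Na$ to the chart $\dif$ on $\Nb$ to produce a $\Ck$-structure realizing $\dif$. The ambiguity in the normalization is exactly the freedom to postcompose the $\Na$-chart with an element $b\in\DiffRz$ (a $\Ck$-diffeomorphism of $\bR$ fixing $0$ — the only diffeomorphisms of $\bR$ preserving $\Rzp$ setwise and fixing the structure near $0$) and, independently, to postcompose the $\Nb$-chart with an element $a\in\DiffRz$; under these the transition homeomorphism $\dif$ changes to $a\circ\dif\circ b^{-1}$, which is precisely the orbit relation of the action $\mu$. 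Hence two structures $\Ustruct,\Vstruct$ give the same $\DiffRz$-double coset if and only if there is a $\Ck$-diffeomorphism $\DLine\to\DLine$ \emph{fixing} $\pta$ and $\ptb$ carrying $\Ustruct$ to $\Vstruct$; this is the second bullet of Theorem~\ref{th:Ck-struct_on_L_simple}. For the first bullet, one additionally allows diffeomorphisms of $\DLine$ exchanging $\pta\leftrightarrow\ptb$ (these exist, e.g. the map induced by $x\mapsto x$ with the two sheets swapped, using Lemma~\ref{lm:branch_pt_invariant}); swapping the roles of $\Na$ and $\Nb$ replaces $\dif$ by $\dif^{-1}$ up to the $\DiffRz$-action, which is exactly the passage to $(\DiffRz,\pm)$-double cosets.

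The main obstacle I anticipate is the gluing/normalization step: one must check carefully that the $\Ck$-structure obtained by gluing the standard chart on $\Na\cong\bR$ to an arbitrary chart $\dif\in\aHRzp$ on $\Nb\cong\bR$ is genuinely a $\Ck$-structure on $\DLine$ as a whole — the subtlety being that $\DLine$ is non-Hausdorff, so ``compatibility on overlaps'' must be verified at both origins simultaneously, and one must confirm that the atlas is maximal and that no hidden $\Ck$ obstruction arises from the doubled point. The point $0\in\bR$ is \emph{not} in the overlap $\Na\cap\Nb=\Rzp$, so a priori the behavior of $\dif$ at $0$ is unconstrained beyond continuity; this is exactly why $\aHRzp$ (rather than $\DiffRz$) is the right object, and why, in contrast to $\bR$, infinitely many inequivalent structures appear. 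Once the gluing is shown to produce a legitimate $\Ck$-structure and the two actions of $\DiffRz$ on the transition map are identified as above, both bijections follow. The remaining bookkeeping — that every $\Ck$-diffeomorphism of $\DLine$ restricts on $\Rzp$ to a $\Ck$-diffeomorphism and either fixes or exchanges the origins — is supplied by Lemma~\ref{lm:branch_pt_invariant} and is routine.
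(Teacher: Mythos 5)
Your proposal is correct and follows essentially the same route as the paper: reduce to a two-chart ("minimal") atlas using the uniqueness of $\Ck$-structures on $\bR$ (the paper's Corollary~\ref{cor:adding_one_chart} and Lemma~\ref{lm:minimal_atlas_exist}), record the $0$-extension of the transition map as an element of $\aHRzp$, and identify the residual chart ambiguity and the fix/exchange dichotomy for diffeomorphisms with the $\DiffRz$-double coset and $(\DiffRz,\pm)$-double coset relations (the paper's Theorem~\ref{th:Dfix_Dex} and Corollary~\ref{cor:DiffAB_expl}). Your normalization of the $\Na$-chart to the identity is exactly the paper's "special minimal atlas" device, so the two arguments coincide in substance.
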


The following lemma coincides with statements~\ref{enum:th:Dfix_Dex:self_bij} and~\ref{enum:th:Dfix_Dex:DAB_pm_fixex} of Theorem~\ref{th:Dfix_Dex} below.

\begin{sublemma}
For any $\Ck$-structure, $\DiffLxOrPresxOFix[\Ustruct]$ contains a normal subgroup isomorphic to the group $\aDRnbp$ of diffeomorphisms of $\bR$ fixed near $0$.
In particular, $\DiffLxOrPresxOFix[\Ustruct]$ is always uncountable.

More generally, let $\Ustruct$ and $\Vstruct$ be $\Ck$-structures on $\DLine$ and $*\in\{\mathrm{fix},\mathrm{ex}\}$.
If $\DiffLx[\Ustruct,\Vstruct]{*,\bullet}$ is non-empty, then it is uncountable as well.
\end{sublemma}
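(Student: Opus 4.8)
The plan is to reduce everything to the first assertion about $\DiffLxOrPresxOFix[\Ustruct]$ and then bootstrap to the general statement via the coset/double-coset correspondence. First I would exhibit, for a \emph{fixed} $\Ck$-structure $\Ustruct$ on $\DLine$, an explicit injective group homomorphism $\aDRnbp \hookrightarrow \DiffLxOrPresxOFix[\Ustruct]$ whose image is normal. The natural candidate is the following: a diffeomorphism $\dif\in\aDRnbp$ is the identity on a neighborhood $(-\eps,\eps)$ of $0$, hence in particular it preserves $\Rzp = \DLine\setminus\{\pta,\ptb\}$ and fixes a punctured neighborhood of each of $\pta,\ptb$ pointwise; I would extend it by the identity on the two origins to obtain a bijection $\bar\dif\colon\DLine\to\DLine$. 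The point is that $\bar\dif$ is continuous at both origins (it is locally the identity there) and is smooth with respect to $\Ustruct$ in a neighborhood of each origin (again locally identity), and is smooth away from the origins because $\dif\in\Ck(\bR)$; hence $\bar\dif\in\DiffLxOrPresxOFix[\Ustruct]$ \emph{regardless of which $\Ck$-structure $\Ustruct$ is}, since near the origins $\bar\dif$ is the identity and any two charts of $\Ustruct$ agree to make the identity smooth. The assignment $\dif\mapsto\bar\dif$ is clearly an injective homomorphism.

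Next I would check normality of the image $N_{\Ustruct}$ in $\DiffLxOrPresxOFix[\Ustruct]$. Given $\gdif\in\DiffLxOrPresxOFix[\Ustruct]$ and $\dif\in\aDRnbp$, the conjugate $\gdif\circ\bar\dif\circ\gdif^{-1}$ is the identity outside $\gdif(\supp\dif)$, which is a compact subset of $\Rzp$ not accumulating at $\pta$ or $\ptb$ (because $\gdif$ fixes the origins and is a homeomorphism, so it sends a set bounded away from $0$ to a set bounded away from $0$). Thus $\gdif\circ\bar\dif\circ\gdif^{-1}$ is again the identity near both origins; restricting to $\Rzp$ it is a $\Ck$-diffeomorphism of $\bR$ (composition of three such, using that $\gdif$ restricted to a neighborhood of $\supp\dif$ is a $\Ck$-diffeomorphism in the chart coordinates, which it is, being a $\Ck$-diffeomorphism of $\Ustruct$ away from the branch points where $\Ustruct$ restricts to an honest $\Ck$-structure on $\Rzp$). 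So the conjugate lies in $N_{\Ustruct}$, proving normality. Since $\aDRnbp\supset\{\,\dif\mid\dif(\px)=\px\text{ outside a compact set}\,\}$ contains, e.g., all time-one maps of compactly supported vector fields supported in $(1,2)$, it is uncountable, hence so is $\DiffLxOrPresxOFix[\Ustruct]$ and a fortiori $\DiffLx[\Ustruct]{}$.

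For the "more general" claim, suppose $\DiffLx[\Ustruct,\Vstruct]{*,\bullet}$ is nonempty, say it contains $\gdif$, with $*\in\{+,-\}$ and $\bullet\in\{\mathrm{fix},\mathrm{ex}\}$. Then right-translation $\dif\mapsto\gdif\circ\dif$ is a bijection from $\DiffLxOrPresxOFix[\Ustruct]$ onto $\DiffLx[\Ustruct,\Vstruct]{*',\bullet'}$ for the appropriate sign/branch-type $(*',\bullet')$ obtained by composing $\gdif$ with an orientation- and origin-preserving map; more directly, $\dif\mapsto\gdif\circ\dif\circ\gdif_0^{-1}$ gives a bijection onto the component we want once we also fix a reference element, but in fact the cleanest argument is: fix $\gdif\in\DiffLx[\Ustruct,\Vstruct]{*,\bullet}$; then $h\mapsto \gdif\circ h$ is injective from $\DiffLxOrPresxOFix[\Ustruct]$ into $\DiffLx[\Ustruct,\Vstruct]{}$ with image inside a single "$(*,\bullet)$-component", and since the source is uncountable, so is $\DiffLx[\Ustruct,\Vstruct]{*,\bullet}$. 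The main obstacle, and the only place requiring genuine care, is the verification that $\bar\dif$ (and its conjugates) are genuinely $\Ck$ as maps of the \emph{abstract} manifold $(\DLine,\Ustruct)$ at the two origins: one must argue that because $\bar\dif$ equals the identity on a neighborhood of each $\pta,\ptb$, smoothness there is automatic in \emph{any} chart of $\Ustruct$; this is where the "local identity" trick does all the work and makes the statement independent of $\Ustruct$. Everything else is a routine transport of the compact-support condition through homeomorphisms fixing the branch points.
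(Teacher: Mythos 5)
There is a genuine gap, and it sits exactly at the step you flagged as the crux: the claim that for an \emph{arbitrary} $\Ck$-structure $\Ustruct$ the identity-extension $\bar\dif$ of any $\dif\in\aDRnbp$ lies in $\DiffLxOrPresxOFix[\Ustruct]$. The ``local identity trick'' does handle smoothness at the two origins, but away from them you justify smoothness only by saying that $\dif$ is $\Ck$ as a map of $\bR$ --- that is, $\Ck$ for the \emph{standard} structure on $\Na=\bR$. The restriction $\restr{\Ustruct}{\Na}$ need not be the standard structure (it is only diffeomorphic to it, by Corollary~\ref{cor:Ck_struct_on_R}), and a standardly smooth map need not be smooth for it. Concretely, take a minimal atlas $\{(\Na,\Umap),(\Nb,\Vmap)\}$ in which $\Umap$ is a homeomorphism of $\bR$ that fails to be $\Cr{1}$ at some point $p\neq 0$ (e.g.\ piecewise linear with a corner at $p$, and $\Vmap=\Umap$ off the origins, so the transition map is the identity and the atlas is $\Ck$). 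If $\dif\in\aDRnbp$ moves $p$ nonlinearly, then $\Umap\circ\dif\circ\Umap^{-1}$ has unequal one-sided derivatives at $\Umap(p)$, so $\bar\dif$ is not even $\Cr{1}$ for $\Ustruct$; your map $\dif\mapsto\bar\dif$ does not land in $\DiffLxOrPresxOFix[\Ustruct]$. The same conflation reappears in your normality check (the conjugating $\gdif$ is only $\Ck$ for $\Ustruct$, not standardly, so $\restr{\gdif\circ\bar\dif\circ\gdif^{-1}}{\Na}$ need not come from $\aDRnbp$), and since the ``more generally'' translation argument --- which is otherwise fine and is exactly the paper's use of Lemma~\ref{lm:DiffM_DiffN} --- rests on uncountability of $\DiffLxOrPresxOFix[\Ustruct]$, the gap propagates to it.

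The repair is to transport $\aDRnbp$ through a chart rather than use the raw coordinate on $\Na$: fix a minimal atlas $\UAtlas=\{(\Na,\Umap),(\Nb,\Vmap)\}\in\Ustruct$ with $0$-extended transition map $\Utrmap\in\aHRzp$, and use the local-presentation isomorphism $\bijDfix\colon\DiffLxOFix[\Ustruct]\to\DiffRz\,\cap\,\Utrmap\,\DiffRz\,\Utrmap^{-1}$ from Theorem~\ref{th:Dfix_Dex}. Because $\aDRnbp$ is normal in the full group $\aHRzp$ (Lemma~\ref{lm:Dnb_is_normal}) --- note $\Utrmap$ itself need not lie in $\DiffRz$, so normality in $\DiffRz$ alone would not suffice --- one gets $\aDRnbp=\Utrmap\,\aDRnbp\,\Utrmap^{-1}\subset\DiffRz\,\cap\,\Utrmap\,\DiffRz\,\Utrmap^{-1}$, and the preimage under $\bijDfix$ is the desired (normal, uncountable) subgroup. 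Equivalently, your construction is correct verbatim only for structures with a special minimal atlas, where $\restr{\Ustruct}{\Na}$ is standard; for general $\Ustruct$ one must first conjugate by a chart, which is precisely the paper's argument.
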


Our last statement presents examples of $\Ck$-structures from which either of sets $\DiffLxOFix[\Ustruct,\Vstruct]$ and $\DiffLxOEx[\Ustruct,\Vstruct]$ is empty or non-empty.

A $\Ck$-atlas $\UAtlas$ on $\DLine$ will be called \term{minimal} if it consisting of only two charts $\Umap\colon\Na\to\bR$ and $\Vmap\colon\Nb\to\bR$ being homeomorphisms onto and such that $\Umap(\pta)=\Vmap(\ptb)=0$.
We will show in Lemma~\ref{lm:minimal_atlas_exist} that every $\Ck$-structure on $\DLine$ admits a minimal atlas.

In particular, let $\idU\colon\Na\to\bR$ be the identity map, and $\idV\colon\Vman\to\bR$ be the homeomorphism given by $\idV=\idU$ on $\Vset\setminus\{\pta\}$ and $\idV(\ptb)=0$.
Then for each $\gdif\in\aHRzp$ one can define the following minimal atlas $\MinAtlas{\gdif} = \{ (\Na, \idU), (\Nb, \gdif\circ\idV) \}$.
Such an atlas will be called \term{special minimal of $\gdif$}, see Section~\ref{sect:spec_minimal_atlas}.
Evidently, its transition map is
\[
\restr{\gdif\circ\idV\circ\idU^{-1}}{\Rzp} = \restr{\gdif}{\Rzp},
\]
whence $\MinAtlas{\gdif}$ is $\Ck$.

Further, for each $\ax>0$ define the following homeomorphism $\stlin{\ax} \in \aHRzp$ by
\begin{equation}\label{equ:stlin_b}
    \stlin{\ax}(\px) =
    \begin{cases}
        \px,   & \px \leq 0, \\
        \ax\px, & \px>0,
    \end{cases}
\end{equation}
see Figure~\ref{fig:wa}.

\begin{figure}[!htbp]
\includegraphics[height=3cm]{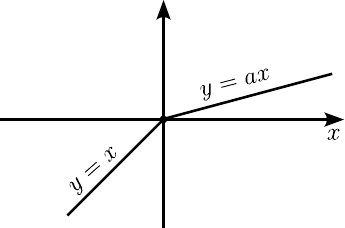}
\caption{Homeomorphism $\stlin{a}$}\label{fig:wa}
\end{figure}

It is a particular case of homeomorphism $\dif$ from Example~\ref{exmp:h_in_Hrzp} for $\alpha=1$, $\beta=\ax$, $s=t=1$.
Let $\trStruct{\ax}$ be the corresponding $\Ck$-structure defined by the minimal atlas $\MinAtlas{\stlin{\ax}}$.

\begin{subtheorem}\label{th:ck_structs_examples}
\begin{enumerate}[leftmargin=*, label={\rm(\arabic*)}, itemsep=1ex]
\item\label{enum:th:ck_structs_examples:a_1}
Suppose $\ax=1$, so $\stlin{\ax}=\id_{\bR}$.
Then there is an isomorphism
\[
    \eta\colon\DiffLx[\trStruct{1}]{} \cong \DiffRz \times \bZ_2,
\]
such that for $*\in\{\pm\}$
\begin{align}\label{equ:eta_a=1}
    &\eta\bigl( \DiffLx[\trStruct{1}]{*,\mathrm{fix}} \bigr) = \DiffRz[\kk,*]\times \{1\},&
    &\eta\bigl( \DiffLx[\trStruct{1}]{*,\mathrm{ex}} \bigr) = \DiffRz[\kk,*]\times \{-1\}.
\end{align}

\item\label{enum:th:ck_structs_examples:a_ne_1}
Let $\ax \ne 1$.
Then $\DiffLxOrPresxOEx[\trStruct{\ax}]{} = \DiffLxOrRevxOFix[\trStruct{\ax}]{} = \varnothing$, and $\DiffLx[\trStruct{\ax}]{}=\DiffLxOrPresxOFix[\trStruct{\ax}]{}\sqcup\DiffLxOrRevxOEx[\trStruct{\ax}]{}$, so every self-diffeomorphism of $\trStruct{\ax}$ either fixes $\pta$ and $\ptb$ and \term{preserves} orientation, or exchanges $\pta$ and $\ptb$ and \term{reverses} orientation.
Moreover, there exists an isomorphism
\[
    \eta\colon\DiffLxOrPresxOFix[\trStruct{\ax}]{} \cong \JDiffRzOrPres,
\]
and an element $\pdif \in \DiffLxOrRevxOEx[\trStruct{\ax}]{}$ of order $2$.
Hence, $\DiffLx[\trStruct{\ax}]{}$ splits into a semidirect product $\DiffLxOrPresxOFix[\trStruct{\ax}]{} \rtimes \langle\pdif\rangle$, and thus it is isomorphic to a certain%
\footnote{Notice that $\JDiffRzOrRev$ also contains the diffeomorphism $\qdif\colon\bR\to\bR$, $\qdif(\px)=-\px$, of order $2$, and therefore $\JDiffRz$ also splits into the semidirect product $\JDiffRzOrPres\rtimes\langle\qdif\rangle$.
However, \term{it is not clear} whether there is an isomorphism between $\DiffLx[\trStruct{\ax}]{}\cong \DiffLxOrPresxOFix[\trStruct{\ax}]{} \rtimes \langle\pdif\rangle$ and $\JDiffRz \cong \JDiffRzOrPres\rtimes\langle\qdif\rangle$ extending $\eta$.} semidirect product $\JDiffRzOrPres\rtimes\bZ_{2}$.

\item\label{enum:th:ck_structs_examples:a_ainv}
Also, for each $a \ne 1$ the structures $\trStruct{\ax}$ and $\trStruct{\inv{\ax}}$ are $\Ck$-diffeomorphic.
In fact, $\DiffLxOrPresxOFix[\trStruct{\ax},\trStruct{\inv{\ax}}] = \DiffLxOrRevxOEx[\trStruct{\ax},\trStruct{\inv{\ax}}] = \varnothing$ and
\[
    \DiffLx[\trStruct{\ax},\trStruct{\inv{\ax}}]{} =
        \DiffLxOrPresxOEx[\trStruct{\ax},\trStruct{\inv{\ax}}]
        \sqcup
        \DiffLxOrRevxOFix[\trStruct{\ax},\trStruct{\inv{\ax}}],
\]
so every $\Ck$-diffeomorphism from $\trStruct{\ax}$ to $\trStruct{\inv{\ax}}$ either fixes $\pta$ and $\ptb$ and \term{reverses} orientation, or exchanges $\pta$ and $\ptb$ and \term{preserves} orientation.
Moreover, for every $\dif\in\DiffLxOrRevxOFix[\trStruct{\ax},\trStruct{\inv{\ax}}]$ and $\gdif\in\DiffLxOrPresxOEx[\trStruct{\ax},\trStruct{\inv{\ax}}]$ we have the following bijections:
\[
    \DiffLxOrPresxOEx[\trStruct{\ax},\trStruct{\inv{\ax}}]
    \xrightarrow{~~\kdif\,\mapsto\,\gdif^{-1}\circ\kdif~~}
    \DiffLxOrPresxOFix[\trStruct{\ax}]
    \xrightarrow{~~\kdif\,\mapsto\,\dif\circ\kdif~~}
    \DiffLxOrRevxOFix[\trStruct{\ax},\trStruct{\inv{\ax}}].
\]

\item\label{enum:th:ck_structs_examples:a_ne_b_1b}
Finally, if $\ax \ne \bx,\inv{\bx}$, then $\DiffLx[\trStruct{\ax},\trStruct{\bx}]{}=\varnothing$, i.e.\ these $\Ck$-structures are not diffeomorphic.
In particular, $\DLine$ has uncountably many pair-wise non-diffeomorphic $\Ck$-structures of each $k\in\bN\cup\infty$.
\end{enumerate}
\end{subtheorem}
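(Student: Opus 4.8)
The plan is to translate each assertion into the double-coset language of Theorem~\ref{th:Ck-struct_on_L_simple} and then reduce it to an elementary study of one-sided Taylor jets at $0$. First I record the chart-level dictionary for structures defined by special minimal atlases $\MinAtlas{\gdif}$ (either by invoking the detailed classification Theorem~\ref{th:Ck-struct_on_L_detailed} or directly from the definitions): a $\Ck$-diffeomorphism $\varphi\colon\trStruct{\gdif}\to\trStruct{\dif}$ of $\DLine$ fixing $\pta,\ptb$ is the same datum as $\alpha:=\restr{\varphi}{\Na}$, which must lie in $\DiffRz$ and satisfy $\dif\circ\alpha\circ\gdif^{-1}\in\DiffRz$; one exchanging $\pta,\ptb$ is the same datum as the homeomorphism $\bar c\in\aHRzp$ extending $\restr{\varphi}{\Rzp}$, subject to $\dif\circ\bar c\in\DiffRz$ and $\bar c\circ\gdif^{-1}\in\DiffRz$; and in either case $\varphi$ preserves orientation of $\DLine$ iff $\alpha$ (resp.\ $\bar c$) preserves orientation of $\bR$. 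I also note once and for all that $\stlin{\ax}^{-1}=\stlin{\inv{\ax}}$, and that for $\ax>0$ the map $\stlin{\ax}$ carries each component of $\Rzp$ onto itself and has one-sided derivatives $1$ and $\ax$ at $0$.

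\textbf{Double cosets: parts~\ref{enum:th:ck_structs_examples:a_ainv} and~\ref{enum:th:ck_structs_examples:a_ne_b_1b}.} For $\alpha,\beta\in\DiffRz$ the one-sided chain rule shows that $\beta\circ\stlin{\ax}\circ\alpha^{-1}$ has one-sided derivatives $\beta'(0)/\alpha'(0)$ and $\ax\,\beta'(0)/\alpha'(0)$ at $0$ when $\alpha$ and $\beta$ have the same orientation behaviour, these two values being exchanged when the behaviours differ; moreover differing behaviour forces $\beta\circ\stlin{\ax}\circ\alpha^{-1}$ to interchange the components of $\Rzp$, so then it equals no $\stlin{\bx}$. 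Hence $\stlin{\bx}\in\DiffRz\,\stlin{\ax}\,\DiffRz$ forces $\bx\in\{\ax,\inv{\ax}\}$; conversely $\alpha=\beta=\id_{\bR}$ realizes $\bx=\ax$, and the (orientation-reversing, affine) choice $\alpha(x)=-x$, $\beta(x)=-x/\ax$ gives $\beta\circ\stlin{\ax}\circ\alpha^{-1}=\stlin{\inv{\ax}}$. Therefore $\DiffRz\,\stlin{\ax}\,\DiffRz=\DiffRz\,\stlin{\inv{\ax}}\,\DiffRz$, the $(\DiffRz,\pm)$-class of $\stlin{\ax}$ coincides with this single double coset, and $\stlin{\ax},\stlin{\bx}$ are $(\DiffRz,\pm)$-equivalent iff $\bx\in\{\ax,\inv{\ax}\}$. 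Feeding this into Theorem~\ref{th:Ck-struct_on_L_simple} gives $\trStruct{\ax}\cong\trStruct{\inv{\ax}}$, gives $\trStruct{\ax}\not\cong\trStruct{\bx}$ whenever $\ax\neq\bx,\inv{\bx}$, and — applied to the family $\{\trStruct{\ax}\mid\ax\geq1\}$ — gives uncountably many pairwise non-diffeomorphic $\Ck$-structures on $\DLine$.

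\textbf{Group structure: parts~\ref{enum:th:ck_structs_examples:a_1} and~\ref{enum:th:ck_structs_examples:a_ne_1}, and the refinements in~\ref{enum:th:ck_structs_examples:a_ainv}.} For $\ax=1$ we have $\stlin{1}=\id_{\bR}$, so the side conditions ``$\dif\circ(\cdot)\in\DiffRz$'' are vacuous: $\DiffLxOFix[\trStruct{1}]{}$ is a group isomorphic to $\DiffRz$ via $\varphi\mapsto\restr{\varphi}{\Na}$, the set $\DiffLxOEx[\trStruct{1}]{}$ is in bijection with $\DiffRz$ via $\varphi\mapsto\bar c$, and sending $\varphi$ to the pair formed by the homeomorphism of $\bR$ extending $\restr{\varphi}{\Rzp}$ together with $+1$ or $-1$ according as $\varphi$ fixes or exchanges $\pta,\ptb$ is a group isomorphism $\DiffLx[\trStruct{1}]{}\cong\DiffRz\times\bZ_2$ taking the orientation/fix-exchange strata to the sets in~\eqref{equ:eta_a=1}. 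For $\ax\neq1$: the $j$-th one-sided derivatives at $0$ of the conjugate $\stlin{\ax}\circ\alpha\circ\stlin{\ax}^{-1}$ differ by the factor $\ax^{1-j}$, so this conjugate lies in $\DiffRz$ iff $\alpha^{(j)}(0)=0$ for $2\leq j\leq\kk$ (for $\kk=\infty$: iff $\alpha$ differs from a linear map by a function flat at $0$), i.e.\ iff $\alpha\in\JDiffRz$; together with the orientation test this gives $\DiffLxOrPresxOFix[\trStruct{\ax}]{}\cong\JDiffRzOrPres$, while for orientation-reversing $\alpha$ even the first-derivative matching fails, so $\DiffLxOrRevxOFix[\trStruct{\ax}]{}=\varnothing$. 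Running through the four orientation/fix-exchange strata of $\trStruct{\ax}\to\trStruct{\ax}$ and of $\trStruct{\ax}\to\trStruct{\inv{\ax}}$, a first-derivative comparison in the pair of chart conditions forces $\ax^2=1$ in precisely the strata the theorem declares empty; in the remaining surviving strata explicit piecewise-affine maps work — the orientation-reversing involution $\bar c$ with $\bar c(x)=-\sqrt{\ax}\,x$ for $x\geq0$ and $\bar c(x)=-x/\sqrt{\ax}$ for $x\leq0$ gives the order-$2$ element $\pdif\in\DiffLxOrRevxOEx[\trStruct{\ax}]{}$; the choice $\bar c=\stlin{\ax}$ gives an element of $\DiffLxOrPresxOEx[\trStruct{\ax},\trStruct{\inv{\ax}}]$; and the reflection $x\mapsto-x$ (fixing $\ptb$) gives an element of $\DiffLxOrRevxOFix[\trStruct{\ax},\trStruct{\inv{\ax}}]$. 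The semidirect decomposition $\DiffLx[\trStruct{\ax}]{}=\DiffLxOrPresxOFix[\trStruct{\ax}]{}\rtimes\langle\pdif\rangle$ and the displayed bijections in part~\ref{enum:th:ck_structs_examples:a_ainv} are then formal, given the two-stratum partition of $\DiffLx[\trStruct{\ax}]{}$ together with one distinguished element of the non-identity stratum.

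\textbf{Main obstacle.} The genuine content is the double-coset step: proving that the one-sided derivative ratio at $0$ is a \emph{complete} invariant — that $\stlin{\bx}\in\DiffRz\,\stlin{\ax}\,\DiffRz$ already forces $\bx\in\{\ax,\inv{\ax}\}$ — and keeping straight all the orientation bookkeeping (which of $\alpha,\beta,\bar c$ preserve versus interchange the two components of $\Rzp$, and how this couples with fixing versus exchanging $\pta,\ptb$) without sign errors. Everything else reduces either to one-sided Taylor expansion at $0$ — elementary once one uses that a map of $\aHRzp$ lies in $\DiffRz$ exactly when all its one-sided derivatives at $0$ coincide and the first is nonzero — or to formal group theory; the only mild extra care is the $\kk=\infty$ case of the ``$\alpha^{(j)}(0)=0$'' criterion, where one uses the flat-function description of $\JDiffRz[\infty]$ from Section~\ref{sect:notation}.
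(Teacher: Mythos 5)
Your proposal is correct and follows essentially the same route as the paper: the chart-level dictionary you set up is exactly the content of Theorem~\ref{th:Dfix_Dex} specialized to special minimal atlases, your one-sided derivative computations (the ratio invariant for $\beta\circ\stlin{\ax}\circ\alpha^{-1}$ and the factor $\ax^{1-j}$ for the conjugate) reproduce Lemma~\ref{lm:l_ab_homeo} and Corollary~\ref{cor:D_cap_wb_D_wa}, and your explicit order-two element $\pdif$ is the same piecewise-affine involution the paper uses. The only differences are organizational — you present fix-diffeomorphisms via their restriction to $\Na$ rather than the $\Nb$-chart presentation and route part~\ref{enum:th:ck_structs_examples:a_ne_b_1b} through Theorem~\ref{th:Ck-struct_on_L_simple} — which does not change the substance of the argument.
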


\begin{subremark}\rm
Notice that we have a surjective homomorphism $j^1\colon\JDiffRzOrPres \to \Rzp$, $j^1(\dif) = \dif'(0)$.
Let $G_{\kk}$ be its kernel.
Then $G_{\kk}$ consists of diffeomorphisms $\kk$-tangent to the identity and was studied in~\cite{Sternberg:DMJ:1957}.
Moreover, the group $G_{\infty}$ plays an important role in the theory of foliations, see~\cite{Takens:AIF:1973, Sergeraert:IM:1977, Kawabe:ASPM:1985}.
\end{subremark}

\subsection*{Structure of the paper}
Section~\ref{sect:preliminaries} is preliminary: it contains definitions of double cosets and branch points of non-Hausdorff spaces.
In Section~\ref{sect:Ck_structs_on_loc_eu_spaces} we discuss differentiable structures on locally Euclidean spaces not assuming them to be manifolds in a usual sense, i.e.\ Hausdorff and second countable.
In Section~\ref{sect:diff_struct_on_bR} we present a proof of a well-known result that any two $\Ck$-structures, $1\leq\kk \leq \infty$, on the real line are diffeomorphic, see Corollary~\ref{cor:Ck_struct_on_R}.
In fact we need a more explicit statement claiming that if $\Mman$ is a one-dimensional locally Euclidean space endowed with some $\Ck$-structure and $\Uset\subset\Mman$ is an open subset homeomorphic with $\bR$, then there exists a $\Ck$-compatible chart $\Umap\colon\Uset\to\bR$ being a homeomorphism onto, see Corollary~\ref{cor:adding_one_chart}.
In Section~\ref{sect:Ck_ctruct_on_L} we classify $\Ck$-structures on $\DLine$, and, in particular, prove Theorem~\ref{th:Ck-struct_on_L_simple}, see Theorem~\ref{th:Ck-struct_on_L_detailed}.
Finally in Section~\ref{th:proof:th:ck_structs_examples} we prove Theorem~\ref{th:ck_structs_examples}.

\section{Preliminaries}\label{sect:preliminaries}
Throughout the paper the arrows $\monoArrow$ and $\epiArrow$ mean \term{monomorphism} and \term{epimorphism} respectively.
We will also regard the cyclic group $\bZ_{2}$ of order $2$ as the multiplicative group $\{\pm1\}$.

\subsection{Semidirect products}\label{sect:semidirect_products}
Let $\Hgrp$ be a group with a unit $e$ and $\Agrp$ be some subgroup of $\Hgrp$.
A homomorphism $s\colon\Hgrp\to\Agrp$ is called \term{retraction} if $s(a)=a$ for all $a\in\Agrp$.

Suppose $s\colon\Hgrp\to\Agrp$ is a homomorphism-retraction, and let $\Kgrp$ be its kernel.
Then $\Agrp$ acts on $\Kgrp$ by conjugations, and this allows to define the following group $\Kgrp\rtimes_{s}\Agrp$ called the \term{semidirect product} of $\Kgrp$ and $\Agrp$ corresponding to the retraction $s$.
Namely, $\Kgrp\rtimes_{s}\Agrp$ is a cartesian product of \term{sets} $\Kgrp\times\Agrp$ with the following multiplication:
\[
    (\kel,\ael)(\lel,\bel) := (\kel \ael\lel\ael^{-1},\ael\bel),  \qquad (\kel,\ael),(\lel,\bel) \in\Kgrp\times\Agrp.
\]
Moreover, the map $\gamma\colon\Kgrp\rtimes_{s}\Agrp \to \Hgrp$, $\gamma(\kel,\ael)=\kel\ael$, is an isomorphism of groups making commutative the following diagram:
\[
  \xymatrix@C=5em@R=1.2em{
    \Kgrp \ar@{^(->}[r]^-{\kel\,\mapsto\,(\kel, e)} \ar@{=}[d] &
    \Kgrp\rtimes_{s}\Agrp   \ar@{->>}[r]^-{(\kel,\ael)\,\mapsto\,\ael} \ar[d]_-{\gamma}^-{\cong} &
    \Agrp \ar@{=}[d]
    \\
    \Kgrp  \ar@{^(->}[r]     &
    \Hgrp  \ar@{->>}[r]^-{s} &
    \Agrp
  }
\]
We will exploit this construction several times.

\subsection{Wreath product $\Dgrp\wr\bZ_2$}\label{sect:wreath_prod}
Let $\Dgrp$ be a group and
\[
    \phi\colon\Dgrp\times\Dgrp\to\Dgrp\times\Dgrp,
    \qquad
    \phi(a,b)=(b,a),
\]
be the automorphism of exchanging coordinates.
It has the order $2$, and therefore defines a monomorphism $\bar{\phi}\colon\bZ_{2}\to \Aut(\Dgrp\times\Dgrp)$, given by $\bar{\phi}(1)=\id_{\Dgrp\times\Dgrp}$ and $\bar{\phi}(-1)=\phi$.

The corresponding semidirect product $(\Dgrp\times\Dgrp)\rtimes\bZ_2$ is called the \term{wreath product} of $\Dgrp$ and $\bZ_{2}$ and denoted by $\Dgrp\wr\bZ_2$, see e.g.~\cite{Meldrum:1995}.
Thus, $\Dgrp\wr\bZ_{2}$ is the direct product of \term{sets} $\Dgrp\times\Dgrp\times\bZ_{2}$ with the following binary operation:
\[
    (a,b,\delta) (c,d,\eps) =
    \begin{cases}
        (ac,bd, \delta\eps), & \delta = +1, \\
        (ad,bc, \delta\eps), & \delta = -1.
    \end{cases}
\]

\subsection{Double cosets $\gCHD$}\label{sect:double_cosets}
We recall here the definition of double cosets, see e.g.~\cite[Section~1.7]{Hall:Gr:1959}.
Let $\Hgrp$ be a group and $\Cgrp$, $\Dgrp$ be two subgroups.
Then we have a natural \term{left} action of the product $\Cgrp\times\Dgrp$ on $\Hgrp$:
\[
    \Cgrp\times\Dgrp\times\Hgrp\to\Hgrp,
    \qquad
    (\cel,\del)\cdot \hel = \cel\hel\del^{-1}.
\]
The corresponding orbit of $\hel\in\Hgrp$ is called the \term{$(\Cgrp,\Dgrp)$-double coset} of $\hel$ and is denoted by $\Cgrp\hel\Dgrp$.
If $\Cgrp=\Dgrp$, then $\Dgrp\hel\Dgrp$ is also called the \term{$\Dgrp$-double coset} of $\hel$.

Notice that
\[
    \Cgrp\hel\Dgrp
        = \{ \cel\hel\del^{-1}\mid \cel\in\Cgrp, \del\in\Dgrp\}
        = \{ \cel\hel\del\mid \cel\in\Cgrp, \del\in\Dgrp\}
        = \mathop{\cup}\limits_{\cel\in\Cgrp} \cel\hel\Dgrp
        = \mathop{\cup}\limits_{\del\in\Dgrp} \Cgrp\hel\del.
\]
In particular, it is the union of left cosets of $\Dgrp$ corresponding to elements of the right coset $\Cgrp\hel$, as well as the union of right cosets of $\Cgrp$ corresponding to elements of the left coset $\hel\Dgrp$.
The partition of $\Hgrp$ into $(\Cgrp,\Dgrp)$-double cosets is denoted by
\[ \gCHD. \]
Partitions into double cosets are not so ``regular'' as partitions into left or right cosets, and they are often hard to compute, see e.g.~\cite{Seitz:DblCoset:1998, HamiltonWiltonZalesskii:JLMS:2013, Hamilton:ProcAMS:2016, Kuan:AHP:2019}, and references therein.
For instance, the double coset classes may have distinct sizes.
The reason is that the above action of $\Cgrp\times\Dgrp$ on $\Hgrp$ is not always free.
\begin{subexample}\rm
Let $\bD_3 = \langle r,s \mid r^3=s^2=(rs)^2=1\rangle$ be the dihedral group consisting of six elements $e,r,r^2,s,sr,sr^2$.
Consider two subgroups $A = \{ e, s \}$ and $B = \{ e, s r \}$ of $\bD_3$.
One easily checks that
\begin{gather*}
    \dbl{A}{\bD_3}{B} = \bigl\{ \{e,s,r,sr\}, \ \{ r^2, sr^2\} \bigr\}, \\
    \dbl{A}{\bD_3}{A} = \dbli{A}{\bD_3}  = \bigl\{ \{e,s\}, \{ r, r^2, sr, sr^2\} \bigr\}.
\end{gather*}
\end{subexample}

The following lemma is trivial.
We will refer to it several times, and therefore it will be convenient to put it as a separate statement:
\begin{sublemma}\label{lm:DHD_normal}
  If $\Cgrp=\Dgrp$ and this group is normal in $\Hgrp$, then $\gCHD$ coincides with the partition $\Hgrp/\Dgrp$ into \textup{(}no matter left or right\textup{)} cosets of $\Dgrp$ in $\Hgrp$.
\qed
\end{sublemma}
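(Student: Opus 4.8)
The plan is to unwind the definition of a double coset and collapse it using normality of $\Dgrp$; under the hypothesis $\Cgrp=\Dgrp$ the set $\gCHD$ is by definition the set $\gDHD$ of $\Dgrp$-double cosets, so it suffices to identify the latter with the partition into ordinary cosets.

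First I would recall that, by definition, $\Dgrp\hel\Dgrp=\{\del_1\hel\del_2\mid \del_1,\del_2\in\Dgrp\}$. Since $\Dgrp$ is normal in $\Hgrp$, one has $\hel\Dgrp=\Dgrp\hel$ for every $\hel\in\Hgrp$, and hence
\[
    \Dgrp\hel\Dgrp \ = \ \Dgrp(\hel\Dgrp) \ = \ \Dgrp(\Dgrp\hel) \ = \ (\Dgrp\Dgrp)\hel \ = \ \Dgrp\hel ,
\]
the last equality because $\Dgrp$ is a subgroup, so $\Dgrp\Dgrp=\Dgrp$. Symmetrically, $\Dgrp\hel\Dgrp=(\Dgrp\hel)\Dgrp=(\hel\Dgrp)\Dgrp=\hel\Dgrp$. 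Thus each $\Dgrp$-double coset equals at once the right coset $\Dgrp\hel$ and the left coset $\hel\Dgrp$, which themselves coincide by normality.

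Finally I would note that this identification is compatible with the partition structure: for $\gel,\hel\in\Hgrp$ one has $\Dgrp\hel\Dgrp=\Dgrp\gel\Dgrp$ if and only if $\Dgrp\hel=\Dgrp\gel$, i.e.\ if and only if $\gel$ and $\hel$ lie in the same coset of $\Dgrp$. Consequently the partition $\gDHD$ of $\Hgrp$ into $\Dgrp$-double cosets coincides with the partition $\Hgrp/\Dgrp$, which is exactly the assertion. There is essentially no genuine obstacle here: the only point requiring (minimal) care is to invoke both $\Dgrp\Dgrp=\Dgrp$ (subgroup) and $\hel\Dgrp=\Dgrp\hel$ (normality) at the appropriate places, after which the statement is immediate — which is precisely why it was labelled trivial.
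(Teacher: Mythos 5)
Your argument is correct and is exactly the intended (and standard) one: normality gives $\Dgrp\hel\Dgrp=\Dgrp\hel=\hel\Dgrp$, so the double-coset partition is the coset partition; the paper itself omits the proof as trivial. Nothing to add.
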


Let us mention one more description of $\gCHD$ which will be useful in this paper.
\begin{sublemma}\label{lm:CHD_partition}
Let $\gel,\hel\in\Hgrp$.
Then the following conditions are equivalent:
\begin{enumerate}[itemsep=1ex]
\item\label{enum:lm:CHD_partition:CgD=ChD} $\gel\in\Cgrp\hel\Dgrp$, which is the same as $\Cgrp\gel\Dgrp = \Cgrp\hel\Dgrp$;
\item\label{enum:lm:CHD_partition:C_gDh_non_empty} $\Cgrp\cap \gel\Dgrp\hel^{-1}  \ne \varnothing$.
\end{enumerate}
\end{sublemma}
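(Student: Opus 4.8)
The plan is simply to unwind the definitions; there is essentially no content beyond that. Recall from Section~\ref{sect:double_cosets} that $\Cgrp\hel\Dgrp$ is the orbit of $\hel$ under the left action of $\Cgrp\times\Dgrp$ on $\Hgrp$ given by $(\cel,\del)\cdot x = \cel x\del^{-1}$. Since the orbits of any group action partition the underlying set, two elements lie in a common orbit if and only if their orbits coincide. Applied to $\gel$ and $\hel$ this immediately yields the internal equivalence asserted inside~\ref{enum:lm:CHD_partition:CgD=ChD}, namely $\gel\in\Cgrp\hel\Dgrp \iff \Cgrp\gel\Dgrp = \Cgrp\hel\Dgrp$.

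For the equivalence of~\ref{enum:lm:CHD_partition:CgD=ChD} and~\ref{enum:lm:CHD_partition:C_gDh_non_empty} I would rewrite membership explicitly: $\gel\in\Cgrp\hel\Dgrp$ means there exist $\cel\in\Cgrp$ and $\del\in\Dgrp$ with $\gel = \cel\hel\del^{-1}$. Multiplying this identity on the right first by $\del$ and then by $\hel^{-1}$ turns it into the equivalent identity $\gel\del\hel^{-1} = \cel$. Hence $\gel\in\Cgrp\hel\Dgrp$ holds precisely when there is some $\del\in\Dgrp$ with $\gel\del\hel^{-1}\in\Cgrp$, i.e.\ precisely when $\Cgrp\cap\gel\Dgrp\hel^{-1}\neq\varnothing$, which is~\ref{enum:lm:CHD_partition:C_gDh_non_empty}; conversely, any common element of $\Cgrp$ and $\gel\Dgrp\hel^{-1}$ runs the computation backwards and exhibits $\gel$ as an element of $\Cgrp\hel\Dgrp$.

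Since the statement is purely formal, there is no genuine obstacle to anticipate. The only point requiring care is bookkeeping the left/right placement of the inverses consistently with the sign conventions fixed for the action in Section~\ref{sect:double_cosets}, so that the equivalence comes out with $\gel\Dgrp\hel^{-1}$ rather than, say, $\hel\Dgrp\gel^{-1}$ on the nose.
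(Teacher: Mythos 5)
Your proposal is correct and follows essentially the same route as the paper: unwind $\gel=\cel\hel\del^{-1}$, rearrange to $\cel=\gel\del\hel^{-1}$ to get a point of $\Cgrp\cap\gel\Dgrp\hel^{-1}$, and reverse the computation for the converse. The extra remark that the internal equivalence in (1) is just the orbit-partition property of the $\Cgrp\times\Dgrp$-action matches how the paper treats it as well.
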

\begin{proof}
\ref{enum:lm:CHD_partition:CgD=ChD}$\Rightarrow$\ref{enum:lm:CHD_partition:C_gDh_non_empty}
Suppose $\gel\in\Cgrp\hel\Dgrp$, so $\gel = \cel\hel\del^{-1}$ for some $\cel\in\Cgrp$ and $\del\in\Dgrp$.
Then $\cel = \gel\del\hel^{-1} \in \Cgrp\cap\gel\Dgrp\hel^{-1}  \ne \varnothing$.

\ref{enum:lm:CHD_partition:C_gDh_non_empty}$\Rightarrow$\ref{enum:lm:CHD_partition:CgD=ChD}
Conversely, if $\Cgrp\cap\gel\Dgrp\hel^{-1}$ contains some element $\cel$, i.e.\ $\cel = \gel\del\hel^{-1}$ for some $\del\in\Dgrp$, then $\gel = \cel\hel\del^{-1} \in \Cgrp\hel\Dgrp$.
\end{proof}

\subsection{Double cosets $\dbli{\Dgrp}{\Hgrp}$}\label{sect:pm_double_cosets}
Suppose $\Cgrp=\Dgrp$.
Then for each $\hel\in\Hgrp$, we have that $(\Dgrp\hel\Dgrp)^{-1} = \Dgrp\hel^{-1}\Dgrp$ is the $\Dgrp$-double coset of $\hel^{-1}$, see e.g.\ the discussion in~\cite[\S3]{Frame:BAMS:1941}.
The following set
\[
    \Dgrp\hel^{\pm1}\Dgrp := \Dgrp\hel\Dgrp \cup \Dgrp\hel^{-1}\Dgrp = \{ \cel\hel\del, \cel\hel^{-1}\del \mid \cel,\del\in\Dgrp\}
\]
will be called \term{$(\Dgrp,\pm)$-double coset of $\hel$}.
Thus, if $\Dgrp\hel\Dgrp$ mutually contains some $\gdif\in\Hgrp$ and its inverse, then $\Dgrp\hel^{\pm1}\Dgrp=\Dgrp\hel\Dgrp=\Dgrp\hel^{-1}\Dgrp$.
Otherwise, $\Dgrp\hel^{\pm1}\Dgrp = \Dgrp\hel\Dgrp \sqcup \Dgrp\hel^{-1}\Dgrp$.

It follows that $(\Dgrp,\pm)$-double cosets yield a partition on $\Hgrp$ which will be denoted by
\[
    \dbli{\Dgrp}{\Hgrp}.
\]

One can give another description of $\dbli{\Dgrp}{\Hgrp}$.
Note that we have a natural \term{left} action
\[
    (\Dgrp\wr\bZ_{2}) \times\Hgrp \to \Hgrp,
    \qquad
    (a,b,\delta)\cdot\hel := a \hel^{\delta} b^{-1}.
\]
Indeed, if $a,b,c,d\in\Dgrp$ and $\hel\in\Hgrp$, then
\begin{align*}
(a,b,1) \bigl( (c,d,1)\cdot \hel \bigr)
    &= a(c \hel d^{-1})b^{-1}
     = (ac,bd, 1)\cdot\hel
     = \bigl( (a,b,1) (c,d,1) \bigr) \cdot \hel,
\\
(a,b,-1) \bigl( (c,d,1)\cdot \hel \bigr)
    &= a(c \hel d^{-1})^{-1} b^{-1}
     = (ad, bc, -1) \cdot\hel
     = \bigl( (a,b,-1) (c,d,1) \bigr)\cdot \hel,
\\
(a,b,1) \bigl( (c,d,-1)\cdot \hel \bigr)
    &= ac \hel^{-1} d^{-1} b^{-1}
     = (ac, bd, -1) \cdot\hel
     = \bigl( (a,b,1) (c,d,-1) \bigr)\cdot \hel,
\\
(a,b,-1) \bigl( (c,d,-1)\cdot \hel \bigr)
    &= a(c \hel^{-1} d^{-1})^{-1} b^{-1}
     = (ad, bc, 1)\cdot \hel
     = \bigl( (a,b,-1) (c,d,-1) \bigr)\cdot \hel.
\end{align*}
It is easy to see that $\dbli{\Dgrp}{\Hgrp}$ coincides with the partition into orbits of that action of $\Dgrp\wr\bZ_2$.

\subsection{Branch points}
Let $\Mman$ be a topological space with topology $\topX$, and $\px\in\Mman$ be a point.
Denote by $\hcl{\px}:=\mathop{\cap}\limits_{\px\in\Wman\in\topX} \Cl{\Wman}$ the intersection of closures of all open neighborhoods of $\px$, we will call it the \term{Hausdorff closure of $\px$}.
Evidently, $\px\in\hcl{\px}$.
Also note that $\hcl{\px}$ consists of $\py\in\Mman$ such that $\px$ and $\py$ have no disjoint neighborhoods.

Say that $\px$ is a \term{branch} point of $\Mman$.
It is well known and is easy to see that $\Mman$ is Hausdorff iff $\{\px\}=\hcl{\px}$ for all $\px\in\Mman$, i.e.\ $\Mman$ has no branch points.
There are not standard terms for such points.
For instance in~\cite{OConnel:TA:2024} they are called \term{Hausdorff-violating}.

The following simple lemma is left for the reader.
\begin{sublemma}\label{lm:branch_pt_invariant}
Let $\dif\colon\Mman\to\Mman$ be a homeomorphism of $\Mman$.
Then $\dif(\hcl{\px})=\hcl{\dif(\px)}$ for all $\px\in\Mman$.
In particular, the set of branch points of $\Mman$ is invariant under homeomorphisms.
\end{sublemma}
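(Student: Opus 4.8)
The plan is to unwind the definition of the Hausdorff closure and to combine three elementary properties of a homeomorphism $\dif$: that it is a bijection (hence commutes with arbitrary intersections of subsets), that it commutes with topological closure, and that it induces a bijection between the open neighborhoods of $\px$ and the open neighborhoods of $\dif(\px)$.

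First I would apply $\dif$ to the defining intersection. Since $\dif$ is injective, it commutes with arbitrary intersections, giving
\[
\dif(\hcl{\px}) = \dif\Bigl( \mathop{\cap}\limits_{\px\in\Wman\in\topX} \Cl{\Wman} \Bigr) = \mathop{\cap}\limits_{\px\in\Wman\in\topX} \dif(\Cl{\Wman}).
\]
Next I would use that a homeomorphism preserves closures, i.e.\ $\dif(\Cl{\Wman}) = \Cl{\dif(\Wman)}$ for every subset $\Wman\subset\Mman$, so the right-hand side equals $\mathop{\cap}\limits_{\px\in\Wman\in\topX} \Cl{\dif(\Wman)}$.

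The final step is a reindexing of this intersection. Because $\dif$ is a homeomorphism, the assignment $\Wman\mapsto\dif(\Wman)$ is a bijection from the family of open neighborhoods of $\px$ onto the family of open neighborhoods of $\dif(\px)$: openness is preserved in both directions, and $\px\in\Wman$ holds if and only if $\dif(\px)\in\dif(\Wman)$. Substituting $\Vman=\dif(\Wman)$ therefore turns the intersection into $\mathop{\cap}\limits_{\dif(\px)\in\Vman\in\topX} \Cl{\Vman} = \hcl{\dif(\px)}$, which is exactly the first assertion.

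For the concluding sentence I would recall from the preceding discussion that $\px$ is a branch point precisely when $\hcl{\px}\ne\{\px\}$. Since $\px\in\hcl{\px}$ always and $\dif$ is injective, $\dif(\hcl{\px})$ is a singleton if and only if $\hcl{\px}$ is, in which case it must equal $\{\dif(\px)\}$; together with the identity $\dif(\hcl{\px})=\hcl{\dif(\px)}$ just proved, this shows $\dif(\px)$ is a branch point iff $\px$ is. Hence $\dif$ carries the set of branch points bijectively onto itself. I do not expect any genuine obstacle; the only step meriting a moment's care is the reindexing, where one must confirm that $\Wman\mapsto\dif(\Wman)$ is a genuine bijection of neighborhood families (not merely an injection), which follows by running the same argument for $\dif^{-1}$.
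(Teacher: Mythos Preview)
Your proof is correct and complete. The paper itself does not prove this lemma---it states that ``the following simple lemma is left for the reader''---so there is no approach to compare against; your unwinding of the definition via the three standard properties of a homeomorphism is exactly the intended routine verification.
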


\section{Differentiable structures on locally euclidean spaces}\label{sect:Ck_structs_on_loc_eu_spaces}
This section briefly recalls definition of differentiable structures on locally Euclidean spaces.
We refer the reader to the book by J.~Lee~\cite[Chapters~1,2]{Lee:SmoothManif:2013}, and also to the paper of F.~Takens~\cite{Takens:BSMF:1979} for an equivalent approach.

Usually a \term{manifold} is a Hausdorff second countable locally Euclidean topological space.
The first two conditions are necessary and sufficient for metrizability of manifolds and for the  possibility to embed them into some Euclidean spaces.
However, for defining differentiable structures only the property of being locally Euclidean is enough.
Therefore we will assume here only that property.
For simplicity we also do not consider manifolds with boundary.

For each $m\geq0$ the real $m$-dimensional linear space $\bR^{m}$ with the usual topology is called \term{Euclidean}.
Note that $\bR^{0}=\{0\}$ is a one-point space being a $0$-dimensional linear space.
Also recall that a homeomorphism $\dif\colon\Uset\to\Vset$ between open subset of $\bR^{m}$ is called a \term{$\Ck$-diffeomorphism} for some $\kk\in\bN\cup\{\infty\}$, if both $\dif$ and $\dif^{-1}$ are $\Ck$-maps.

\subsection{Locally Euclidean spaces}
Let $\Mman$ be a topological space.
A \term{chart} on $\Mman$ is an open embedding (i.e.\ a homeomorphism onto open subset) $\Umap\colon\Uset\to\bR^{m}$ of some open subset $\Uset\subset\Mman$ for some $m\geq0$.
In this case we will say that this chart is \term{$m$-dimensional} and $\Uset$ is its \term{domain}.
It will often be convenient to regard such a chart as a pair $\Uchr=(\Uset,\Umap)$.

Notice that then for every open subset $\Uset' \subset \Uset$ the restriction $\restr{\Umap}{\Uset'}\colon\Uset'\to\bR^{m}$ is also an $m$-dimensional chart on $\Mman$.

Any collection $\UAtlas=\{(\Useti,\Umapi)\}_{\ui\in\UInd}$ of charts on $\Mman$ will be called a \term{partial atlas}.
If their domains cover $\Mman$, i.e.\ $\Mman=\mathop{\cup}\limits_{\ui\in\UInd}\Useti$, then $\UAtlas$ is called an \term{atlas}.
Evidently, the union of any collection of (partial) atlases is a (partial) atlas.
It will also be convenient to regard each chart $\Uchr$ as a partial atlas $\{\Uchr\}$ consisting of that chart only.
This will never lead to confusion.

Say that $\Mman$ is \term{locally Euclidean} if either of the following equivalent conditions holds:
\begin{enumerate}[label={\rm(LE\arabic*)}, itemsep=1ex]
\item\label{enum:manif:rr}
every point $\px\in\Mman$ has an open neighborhood homeomorphic to some open subset of some Euclidean space (whose dimension may depend on $\px$);
\item\label{enum:manif:cover}
$\Mman$ admits an atlas.
\end{enumerate}

\subsection{Differentiable structures}
Let $\Uchr=(\Uset,\Umap)$ and $\Vchr=(\Vset,\Vmap)$ be two \term{overlapping} charts on $\Mman$ with values in $\bR^{m}$, i.e.\ $\Uset\cap\Vset \ne \varnothing$.
Then the following well-defined homeomorphism between open subsets of $\bR^{m}$
\begin{equation}\label{equ:trans_map}
    \Vmap\circ\Umap^{-1}\colon\Umap(\Uset\cap\Vset)\to\Vmap(\Uset\cap\Vset)
\end{equation}
is called the \term{transition map} from $\Uchr$ to $\Vchr$.
Note that its inverse, $\Umap\circ\Vmap^{-1}\colon\Vmap(\Uset\cap\Vset)\to\Umap(\Uset\cap\Vset)$, is then the transition map from $\Vchr$ to $\Uchr$.

In what follows fix once and for all some $\kk\in\bN\cup\{\infty\}$.
Then the charts $\Uchr$ and $\Vchr$ are \term{$\Ck$-compatible} if either $\Uset\cap\Vset=\varnothing$ or the transition map $\Vmap\circ\Umap^{-1}$ is a $\Ck$-diffeomorphism.

A partial atlas $\UAtlas=\{(\Uset_i,\Umap_i)\}_{i\in\UInd}$ on $\Mman$ is \term{$\Ck$} if all its charts are pair-wise $\Ck$-compatible.

Further, two partial $\Ck$-atlases $\UAtlas$ and $\VAtlas$ are \term{$\Ck$-compatible} if their union $\UAtlas \cup \VAtlas$ is still a partial $\Ck$-atlas, i.e.\ the charts from $\UAtlas$ and $\VAtlas$ are pair-wise $\Ck$-compatible as well.

Notice that the relation \term{``to be $\Ck$-compatible''} on the set of all \term{partial atlases} on $\Mman$ is not an equivalence: it is reflexive and symmetric, but not transitive.
However, it is easy to show that it is transitive on the set of all \term{atlases}:
\begin{sublemma}
The relation \term{``to be $\Ck$-compatible''} on the set of all atlases on $\Mman$ is an equivalence relation.
In particular, the union of any collection of pair-wise $\Ck$-compatible $\Ck$-atlases is still a $\Ck$-atlas.
\end{sublemma}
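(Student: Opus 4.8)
The plan is to check the three properties of an equivalence relation in turn, with reflexivity and symmetry essentially immediate and transitivity being the only point requiring an argument. Reflexivity holds by definition: a $\Ck$-atlas is $\Ck$-compatible with itself. Symmetry holds because the transition map from a chart $\Vchr$ to a chart $\Uchr$ is the inverse of the transition map from $\Uchr$ to $\Vchr$, and inverses of $\Ck$-diffeomorphisms are again $\Ck$-diffeomorphisms; hence whether $\UAtlas\cup\VAtlas$ is a partial $\Ck$-atlas does not depend on the order in which $\UAtlas$ and $\VAtlas$ are taken.

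For transitivity, assume $\UAtlas$ is $\Ck$-compatible with $\VAtlas$ and $\VAtlas$ is $\Ck$-compatible with $\WAtlas$. First I would fix arbitrary charts $\Uchr=(\Uset,\Umap)\in\UAtlas$ and $\Wchr=(\Wset,\Wmap)\in\WAtlas$ with $\Uset\cap\Wset\ne\varnothing$, and reduce the claim to showing that the transition map $\Wmap\circ\Umap^{-1}$ is $\Ck$ on $\Umap(\Uset\cap\Wset)$ (the same argument then applies to $\Umap\circ\Wmap^{-1}$, yielding a $\Ck$-diffeomorphism). The crucial step is to verify this smoothness \emph{locally}: for each $\px\in\Uset\cap\Wset$, since $\VAtlas$ is an atlas there is a chart $\Vchr=(\Vset,\Vmap)\in\VAtlas$ with $\px\in\Vset$, and on the open neighborhood $\Umap(\Uset\cap\Vset\cap\Wset)$ of $\Umap(\px)$ one has the factorization $\Wmap\circ\Umap^{-1}=(\Wmap\circ\Vmap^{-1})\circ(\Vmap\circ\Umap^{-1})$, a composition of $\Ck$-diffeomorphisms by the two compatibility hypotheses. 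Since the property of being a $\Ck$-map is local and these neighborhoods cover $\Umap(\Uset\cap\Wset)$, the transition map $\Wmap\circ\Umap^{-1}$ is $\Ck$ everywhere; as $\Uchr$ and $\Wchr$ were arbitrary, $\UAtlas$ and $\WAtlas$ are $\Ck$-compatible.

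I expect this localization step to be the main (and essentially the only) obstacle; it is exactly where the hypothesis that $\VAtlas$ is an \emph{atlas} and not merely a partial atlas enters, and it explains why the relation fails to be transitive on partial atlases (if no chart of $\VAtlas$ covers some point of $\Uset\cap\Wset$, there is nothing to factor through). Finally, the ``in particular'' assertion follows directly: given pair-wise $\Ck$-compatible $\Ck$-atlases $\{\UAtlas_{\alpha}\}_{\alpha\in A}$, their union $\UAtlas$ is an atlas because a union of atlases is an atlas, and any two charts of $\UAtlas$, say $\Uchr\in\UAtlas_{\alpha}$ and $\Wchr\in\UAtlas_{\beta}$, are $\Ck$-compatible — if $\alpha=\beta$ because $\UAtlas_{\alpha}$ is a $\Ck$-atlas, and if $\alpha\ne\beta$ because $\UAtlas_{\alpha}$ and $\UAtlas_{\beta}$ are $\Ck$-compatible — so $\UAtlas$ is a $\Ck$-atlas.
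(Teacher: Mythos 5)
Your proof is correct: the paper states this sublemma without proof (leaving it as an easy exercise), and your argument — reflexivity and symmetry from the definition, plus transitivity via factoring $\Wmap\circ\Umap^{-1}=(\Wmap\circ\Vmap^{-1})\circ(\Vmap\circ\Umap^{-1})$ locally through a chart of $\VAtlas$ covering each point of $\Uset\cap\Wset$, using that smoothness is a local property — is exactly the standard argument the authors intend, and you correctly identify that the covering hypothesis on $\VAtlas$ is where ``atlas'' rather than ``partial atlas'' is essential. The ``in particular'' part is also handled correctly.
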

The corresponding equivalence classes of pair-wise $\Ck$-compatible atlases on $\Mman$ are called \term{$\Ck$-structures}.
It will be convenient to say that an atlas $\UAtlas$ on $\Mman$ \term{represents} a $\Ck$-structure $\Ustruct$ if $\UAtlas\in\Ustruct$.
Also, a partial atlas $\UAtlas$ on $\Mman$, e.g.\ a chart, is \term{$\Ck$-compatible} with $\Ustruct$ if it is $\Ck$-compatible with some atlas from $\Ustruct$.

Note also that taking the union of all atlases from the same $\Ck$-structure $\Ustruct$, we will get a \term{maximal atlas} of $\Ustruct$, in the sense that it contains any other $\Ck$-compatible atlas.
Hence, each \term{$\Ck$-structure is represented by a unique maximal atlas}.
However, such a description is not very useful from the ``practical'' point of view, since the maximal $\Ck$-atlases are too large to be ``\term{observable}''.
In many cases it is more convenient to work with atlases having as small number of charts as possible.
See also Lemma~\ref{lm:char_smooth_structures} below for another characterization of differentiable structures.

Moreover, the number of \term{$\Ck$-structures} on $\Mman$ is still too large, and in order to get some reasonable classification one should consider those structures up to a homeomorphism being $\Ck$-diffeomorphism between them.

\subsection{Orientations}
The transition map~\eqref{equ:trans_map} \term{preserves orientation} if the determinant of the Jacobi matrix of $\Vmap\circ\Umap^{-1}$ is positive at each point $\px\in\Umap(\Uset\cap\Vset)$.
A $\Ck$-atlas $\UAtlas=\{(\Uset_i,\Umap_i)\}_{i\in\UInd}$ on $\Mman$ is called \term{orientable} if all its transition maps preserve orientation.
A $\Ck$-structure on $\Mman$ is \term{orientable}, if it contains an orientable atlas.

\subsection{Differentiable maps between locally Euclidean spaces}
Till the end of this section we assume that $\Mman$ and $\Nman$ are locally Euclidean spaces endowed with some $\Ck$\nobreakdash-structures $\Ustruct$ and $\Vstruct$ respectively.

Let $\dif\colon\Mman\to\Nman$ be a continuous map and $\Uchr=(\Uset,\Umap)$ and $\Vchr=(\Vset,\Vmap)$ be charts in $\Mman$ and $\Nman$ respectively such that $\dif(\Uman)\subset\Vman$.
Then the induced map between open subsets of Euclidean spaces:
\begin{equation}\label{equ:local_presentation}
    \Vmap\circ\dif\circ\Umap^{-1} \colon \Umap(\Uset) \to \Vmap(\Vset)
\end{equation}
is called a \term{\textup{(}local\textup{)} presentation} of $\dif$ in the charts $\Uchr$ and $\Vchr$.

Say that $\dif$ is \term{$\Ck$ \textup(with respect to $\Ustruct$ and $\Vstruct$\textup)}, if for each $\px\in\Mman$ there exist local charts $\Uchr=(\Uset,\Umap)$ on $\Mman$ and $\Vchr=(\Vset,\Vmap)$ on $\Nman$ such that
\begin{itemize}[itemsep=1ex]
\item $\px\in\Uset$ and $\Uchr$ is $\Ck$-compatible with $\Ustruct$;
\item $\dif(\px)\in\Vset$ and $\Vchr$ is $\Ck$-compatible with $\Vstruct$;
\item $\dif(\Uset) \subset \Vset$ and the presentation $\Vmap\circ\dif\circ\Umap^{-1} \colon \Umap(\Uset) \to \Vmap(\Vset)$ of $\dif$ in $\Uchr$ and $\Vchr$ is $\Ck$ at the point $\Umap(\px)$.
\end{itemize}
In this case we will also say use the notation $\dif\colon(\Mman,\Ustruct)\to(\Nman,\Vstruct)$ to indicate that $\dif$ is a $\Ck$-map with respect to $\Ustruct$ and $\Vstruct$, and denote by $\CkABMN$ the set of all such maps.

A homeomorphism $\dif\colon\Mman\to\Nman$ is a \term{$\Ck$-diffeomorphism (with respect to $\Ustruct$ and $\Vstruct$)} or \term{between $(\Mman,\Ustruct)$ and $(\Nman,\Vstruct)$}, if $\dif\in\CrAB{\kk}{\Ustruct}{\Vstruct}{\Mman}{\Nman}$ and $\dif^{-1}\in\CrAB{\kk}{\Vstruct}{\Ustruct}{\Nman}{\Mman}$.

The set of all $\Ck$-diffeomorphisms $\dif\colon(\Mman,\Ustruct)\to(\Nman,\Vstruct)$ will be denoted by $\DiffkABMN$.
In particular, if $\Mman=\Nman$, and $\Ustruct=\Vstruct$, then $\DrAB{\kk}{\Ustruct}{\Ustruct}{\Mman}{\Mman}$ will also be abbreviated  to $\DiffkAM$ or even to $\Diff^{\kk}(\Mman)$ if the corresponding $\Ck$-structure is assumed from the context.

The following easy lemma gives another characterization of $\Ck$-structures.

\begin{sublemma}\label{lm:char_smooth_structures}
Two $\Ck$-structures $\Ustruct$ and $\Vstruct$ on $\Mman$ coincide iff $\id_{\Mman} \in\DrAB{\kk}{\Ustruct}{\Vstruct}{\Mman}{\Mman}$, i.e.\ the identity map of $\Mman$ is a $\Ck$-diffeomorphism with respect to them.
\end{sublemma}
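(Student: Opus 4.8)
The statement is an equivalence whose forward implication is essentially tautological, so the plan is to dispose of it in one line and concentrate on the converse. Assuming $\Ustruct=\Vstruct$, I would note that the charts of $\Ustruct$ cover $\Mman$, pick around each point a single chart $(\Uset,\Umap)$ that is $\Ck$-compatible with $\Ustruct$, and use it simultaneously as a source and a target chart: the local presentation of $\id_\Mman$ is then the identity of $\Umap(\Uset)$, which is $\Ck$; the same applies to $\id_\Mman^{-1}=\id_\Mman$, so $\id_\Mman\in\DrAB{\kk}{\Ustruct}{\Vstruct}{\Mman}{\Mman}$. For the ``if'' direction, suppose both $\id_\Mman\colon(\Mman,\Ustruct)\to(\Mman,\Vstruct)$ and its inverse $\id_\Mman\colon(\Mman,\Vstruct)\to(\Mman,\Ustruct)$ are $\Ck$. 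Since a $\Ck$-structure is determined by its maximal atlas, and the maximal atlas consists of all charts $\Ck$-compatible with the structure, it suffices to show that every chart $\Ck$-compatible with $\Ustruct$ is $\Ck$-compatible with $\Vstruct$, and conversely; I will treat one direction, the other being symmetric. So I would fix a chart $(\Uset,\Umap)$ compatible with $\Ustruct$ and a chart $(\Vset,\Vmap)$ compatible with $\Vstruct$ with $\Uset\cap\Vset\ne\varnothing$, and check that $\Vmap\circ\Umap^{-1}$ and its inverse are of class $\Ck$ at every point of their domains.

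The core of the argument is a three-term factorization of the transition map. Given $\px\in\Uset\cap\Vset$, the hypothesis that $\id_\Mman\colon(\Mman,\Ustruct)\to(\Mman,\Vstruct)$ is $\Ck$ furnishes, at $\px$, some chart $(\Uset_1,\Umap_1)$ compatible with $\Ustruct$ and some chart $(\Vset_1,\Vmap_1)$ compatible with $\Vstruct$, both containing $\px$ (note $\id_\Mman(\px)=\px$), with the presentation $\Vmap_1\circ\Umap_1^{-1}$ of class $\Ck$ near $\Umap_1(\px)$. On a small enough neighbourhood of $\px$ one then has
\[
  \Vmap\circ\Umap^{-1}=\bigl(\Vmap\circ\Vmap_1^{-1}\bigr)\circ\bigl(\Vmap_1\circ\Umap_1^{-1}\bigr)\circ\bigl(\Umap_1\circ\Umap^{-1}\bigr),
\]
where the first and last factors are transition maps between two charts each compatible with the same structure ($\Vstruct$, resp.\ $\Ustruct$), hence $\Ck$ at the relevant points because, by the previous Lemma, the charts compatible with a given structure form a single $\Ck$-atlas; and the middle factor is $\Ck$ near $\Umap_1(\px)$ by construction. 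Since a composite of maps each $\Ck$ at the appropriate point is $\Ck$ at the initial point, $\Vmap\circ\Umap^{-1}$ is $\Ck$ at $\Umap(\px)$; letting $\px$ range over $\Uset\cap\Vset$ shows $\Vmap\circ\Umap^{-1}$ is of class $\Ck$, and applying the same computation to the factorization of $\Umap\circ\Vmap^{-1}$ (using that $\id_\Mman\colon(\Mman,\Vstruct)\to(\Mman,\Ustruct)$ is $\Ck$) shows it is a $\Ck$-diffeomorphism. Hence $(\Uset,\Umap)$ is $\Ck$-compatible with $\Vstruct$.

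I expect the one genuinely delicate point — the ``main obstacle'' — to be conceptual rather than computational: the definition of a $\Ck$-map supplies only \emph{some} pair of adapted charts at each point, never the prescribed charts $(\Uset,\Umap)$ and $(\Vset,\Vmap)$, so one must interpolate between the prescribed and the furnished charts, which is precisely the role of the three-term factorization above and of the transitivity of $\Ck$-compatibility on \emph{atlases} that legitimises treating its outer factors as bona fide $\Ck$ transition maps. Everything else — the forward implication, the choice of a common small neighbourhood of $\px$ on which the factorization is valid, and the fact that $\Ck$-germs compose — is routine.
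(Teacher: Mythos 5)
Your proof is correct. The paper states this lemma without proof (it is labelled an ``easy lemma''), so there is no argument of the authors' to compare against; your treatment --- the tautological forward direction, and for the converse the three-term factorization $\Vmap\circ\Umap^{-1}=(\Vmap\circ\Vmap_1^{-1})\circ(\Vmap_1\circ\Umap_1^{-1})\circ(\Umap_1\circ\Umap^{-1})$ interpolating between the prescribed charts and those furnished by the definition of a $\Ck$-map, combined with the fact that all charts compatible with a fixed structure are pairwise compatible --- is exactly the intended routine verification, and you correctly identify the only delicate point (that the definition supplies only \emph{some} adapted charts at each point).
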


\begin{subexample}\rm
Let $\Uset\subset\bR^{m}$ be an open subset.
Then $\Uset$ admits an atlas $\CanonAtlas{\Uset} := \{(\Uset, \id_{\Uset})\}$ consisting of a unique chart being just the inclusion map $\id_{\Uset}\colon\Uset\subset\bR^{m}$.
This atlas is $\Ck$ for all $k\in\bN\cup\{\infty\}$.
The corresponding $\Ck$-structure on $\Uman$ will be called \term{canonical} and denoted by $\CanonStr{\Uset}$.

Also, if $\Vset\subset\bR^{n}$ is another open subset, then a map $\dif\colon\Uset\to\Vset$ belongs to
$\CrAB{\kk}{\CanonStr{\Uset}}{\CanonStr{\Vset}}{\Uset}{\Vset}$ iff $\dif$ is $\Ck$ as a map between open subsets of Euclidean spaces in the usual sense.
\end{subexample}

\begin{subexample}\rm
Let $\UAtlas = \{ (\Useti,\Umapi) \}_{\ui\in\UInd}$ be a $\Ck$-atlas on $\Mman$ representing a $\Ck$-structure $\Ustruct$, and $\Xman\subset\Mman$ be an open subset.
Then the following statements hold.
\begin{enumerate}[itemsep=1ex]
\item $\Xman$ is locally Euclidean as well;
\item $\restr{\UAtlas}{\Xman} := \{(\Xman\cap\Useti, \restr{\Umapi}{\Xman\cap\Useti}) \mid \Xman\cap\Useti \ne \varnothing, \ui\in\UInd\}$ is a $\Ck$-atlas on $\Xman$;
\item
Denote by $\restr{\Ustruct}{\Xman}$ the $\Ck$-structure on $\Xman$ represented by $\restr{\UAtlas}{\Xman}$.
Then the inclusion map $i\colon\Xman\subset\Mman$ is $\Ck$ with respect to $\restr{\Ustruct}{\Xman}$ and $\Ustruct$.
\end{enumerate}

Moreover, let $\Vchr=(\Vset,\Vmap)$ be any $m$-dimensional chart $\Ck$-compatible with $\Ustruct$.
Thus $\Vmap(\Vset)$ is an open subset of $\bR^{m}$, and so it has a canonical $\Ck$-structure $\CanonStr{\Vmap(\Vset)}$ given by the atlas $\CanonAtlas{\Vmap(\Vset)}$.
Then the corresponding homeomorphism $\Vmap\colon\Vset\to\Vmap(\Vset)$ is a $\Ck$-difeomorphism with respect to the $\Ck$-structures $\restr{\Ustruct}{\Vman}$ and $\CanonStr{\Vmap(\Vset)}$.
\end{subexample}

\begin{sublemma}\label{lm:act_homeo_on_atlases}
Let $\dif\colon\Mman\to\Nman$ be a homeomorphism, $0\leq \kk \leq \infty$, and $\Uchr=(\Uset,\Umap)$ and $\Vchr=(\Vset,\Vmap)$ be charts on $\Mman$.
Then the following statements hold.
\begin{enumerate}[label={\rm(\arabic*)}, leftmargin=*, itemsep=1ex]
\item
$\dif^{*}(\Uchr):=(\dif(\Uman),\Umap\circ\dif^{-1})$ is a chart on $\Mman$ with the same image $\Umap(\Uset)$, and the presentation of $\dif$ in the charts $\Uchr$ and $\dif^{*}(\Uchr)$ is the identity map of $\Umap(\Uset)$:
\[
    (\Umap\circ\dif^{-1}) \circ \dif \circ \Umap^{-1} = \id_{\Umap(\Uset)};
\]

\item
The transition maps from $\Uchr$ to $\Vchr$ and from $\dif^{*}(\Uchr)$ to $\dif^{*}(\Vchr)$ coincide, i.e.
\[
    (\Vmap\circ\dif^{-1}) \circ (\Umap\circ\dif^{-1})^{-1} = \Vmap\circ\Umap^{-1};
\]

\item
Let $\UAtlas=\{(\Useti,\Umapi)\}_{\ui\in\UInd}$ be a $\Ck$-atlas on $\Mman$ representing a $\Ck$-structure $\Ustruct$.
Then
\[
    \dif^{*}\UAtlas=\{ (\dif(\Useti),\Umapi\circ\dif^{-1})\}_{\ui\in\UInd}
\]
is an atlas on $\Nman$ with the same transition maps between the charts with the corresponding indices.
In particular, the following statements hold.
\begin{enumerate}[label={\rm(\alph*)}, ref={\rm(3\alph*)}, itemsep=1ex]
\item $\dif^{*}\UAtlas$ is of class $\Ck$ as well, and we will denote its $\Ck$-structure by $\dif^{*}\Ustruct$;
\item\label{enum:HM_acts_on_smooth_struct}
$\UAtlas$ is a maximal $\Ck$-atlas iff so is $\dif^{*}\UAtlas$;
\item
If $\kk\geq1$, then $\dif$ is a \term{$\Ck$-diffeomorphism} with respect to $\Ustruct$ and $\dif^{*}\Ustruct$, i.e.\ $\dif\in\DrAB{\kk}{\Ustruct}{\dif^{*}\Ustruct}{\Mman}{\Mman}$.
\end{enumerate}
\end{enumerate}
\end{sublemma}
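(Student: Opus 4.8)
The plan is to verify the three items in turn, each being a short diagram chase that uses only that $\dif$, $\Umap$ and $\Vmap$ are homeomorphisms onto open subsets. For item~(1): since $\dif\colon\Mman\to\Nman$ is a homeomorphism and $\Uset$ is open in $\Mman$, the image $\dif(\Uset)$ is open in $\Nman$, and $\Umap\circ\dif^{-1}$ — a composite of homeomorphisms onto open subsets of $\bR^{m}$ — is again of the same kind, so $\dif^{*}(\Uchr)$ is a chart on $\Nman$; its image is $(\Umap\circ\dif^{-1})(\dif(\Uset))=\Umap(\Uset)$, and the presentation identity follows by cancelling $\dif^{-1}\circ\dif$, both sides being maps with domain and codomain $\Umap(\Uset)$.

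Item~(2) is a direct computation: the transition map from $\dif^{*}(\Uchr)$ to $\dif^{*}(\Vchr)$ has domain $(\Umap\circ\dif^{-1})\bigl(\dif(\Uset)\cap\dif(\Vset)\bigr)=\Umap(\Uset\cap\Vset)$ (because $\dif$ is a bijection), which is precisely the domain of $\Vmap\circ\Umap^{-1}$, and on it $(\Vmap\circ\dif^{-1})\circ(\Umap\circ\dif^{-1})^{-1}=\Vmap\circ\dif^{-1}\circ\dif\circ\Umap^{-1}=\Vmap\circ\Umap^{-1}$. I would stress that this argument uses nothing about $\Uchr$ and $\Vchr$ lying in a common atlas, so it applies equally to a pair of charts drawn from two different atlases — which is what makes the compatibility statements below go through.

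For item~(3): $\dif^{*}\UAtlas$ covers $\Nman$ because $\UAtlas$ covers $\Mman$ and $\dif$ is surjective, so it is an atlas on $\Nman$, and by item~(2) each of its transition maps equals the corresponding transition map of $\UAtlas$; part~(3a) is then immediate, as all those transition maps are $\Ck$-diffeomorphisms. For part~(3c), assume $\kk\geq 1$ and let $\px\in\Mman$; choosing $\Uchri=(\Useti,\Umapi)\in\UAtlas$ with $\px\in\Useti$, one checks all conditions in the definition of a $\Ck$-map for the pair of charts $\Uchri$ and $\dif^{*}\Uchri$: indeed $\dif^{*}\Uchri\in\dif^{*}\UAtlas$ contains $\dif(\px)$, and by item~(1) the presentation of $\dif$ in these charts is $\id_{\Umapi(\Useti)}$, which is $\Ck$; hence $\dif\in\CrAB{\kk}{\Ustruct}{\dif^{*}\Ustruct}{\Mman}{\Mman}$. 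Applying the same argument to the homeomorphism $\dif^{-1}\colon\Nman\to\Mman$ — whose associated operation $(\dif^{-1})^{*}$ is inverse to $\dif^{*}$, since $(\dif^{-1})^{*}\dif^{*}\Uchri=(\Useti,\Umapi\circ\dif^{-1}\circ\dif)=\Uchri$ — shows that $\dif^{-1}$ is $\Ck$ as well, so $\dif$ is a $\Ck$-diffeomorphism. For part~(3b), I would observe that $\dif^{*}$ is a bijection from the set of atlases on $\Mman$ to the set of atlases on $\Nman$, with inverse $(\dif^{-1})^{*}$, respecting both inclusion of atlases and the relation ``to be $\Ck$-compatible'' in each direction (item~(2) once more, now with one chart taken from each of the two compared atlases); granting this, if $\UAtlas$ is maximal and $\VAtlas$ is any $\Ck$-atlas on $\Nman$ that is $\Ck$-compatible with $\dif^{*}\UAtlas$, then $(\dif^{-1})^{*}\VAtlas$ is $\Ck$-compatible with $(\dif^{-1})^{*}(\dif^{*}\UAtlas)=\UAtlas$, whence $(\dif^{-1})^{*}\VAtlas\subseteq\UAtlas$ by maximality and therefore $\VAtlas\subseteq\dif^{*}\UAtlas$; the reverse implication follows by swapping the roles of $\dif$ and $\dif^{-1}$.

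I do not anticipate a genuine obstacle, as the argument is entirely formal. The only places that need care are the bookkeeping of the domains of transition maps and presentations — this is why I would spell out the identity $(\Umap\circ\dif^{-1})\bigl(\dif(\Uset)\cap\dif(\Vset)\bigr)=\Umap(\Uset\cap\Vset)$ — and, in part~(3b), making sure that maximality is tested against \emph{all} $\Ck$-compatible atlases and that $\dif^{*}$ is genuinely a bijection respecting inclusion and $\Ck$-compatibility, so that it transports maximality.
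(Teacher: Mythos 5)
Your verification is correct, and it is exactly the routine diagram chase one would expect: the paper states this lemma without proof, so there is no authorial argument to compare against, but your handling of the domains of the transition maps and of the transport of maximality via the inverse operation $(\dif^{-1})^{*}$ covers precisely the points that need checking. One small remark: you silently (and correctly) fix the statement's typo by making $\dif^{*}(\Uchr)$ a chart on $\Nman$ rather than on $\Mman$.
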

Thus, due to Lemma~\ref{lm:act_homeo_on_atlases}\ref{enum:HM_acts_on_smooth_struct}, for each $\kk\in\bN\cup\{\infty\}$ the group $\Homeo(\Mman)$ of homeomorphisms of $\Mman$ naturally acts on the set of $\Ck$-structures, and these homeomorphisms are diffeomorphisms between the corresponding $\Ck$-structures.
Therefore the \term{right} formulation of the problem of classification of $\Ck$-structures on $\Mman$ reduces to the following one:
\begin{subproblem}
Describe the orbits of the action of the group $\Homeo(\Mman)$ on the set of $\Ck$-structures on $\Mman$.
\end{subproblem}

In particular, the statement that \term{each manifold $\Mman$ of dimension $\leq 3$ has a unique \textup(up to a diffeomorphism\textup) $\Ck$-structure} means that the above action is transitive.
In next section we will present an elementary proof of that fact for $\dim\Mman=1$.
Such a statement is usually referred as a classical result, though the authors does not find its proof in the available literature.
In fact, we prove some technical statement which will be useful for studying non-Hausdorff manifolds.

Let us mention few simple statements which will be useful lated in the paper for the study of differentiable structures on non-Hausdorff $1$-manifolds.
\begin{sublemma}\label{lm:atlases_with_one_chart}
Let $\Xman$ be an open subset of $\bR^{n}$ for some $n\geq0$, and $\Ustruct$ and $\Vstruct$ be two $\Ck$-structures on $\Xman$ represented by atlases $\{(\Xman,\Umap)\} \in \Ustruct$ and $\{(\Xman,\Vmap)\} \in \Vstruct$ consisting of single charts and such that $\Umap(\Xman) = \Vmap(\Xman) \subset\bR^{n}$.
Then the homeomorphism
\[
    \Vmap^{-1}\circ\Umap\colon
        \Xman \xrightarrow{\Umap} \Umap(\Xman) = \Vmap(\Xman) \xrightarrow{\Vmap^{-1}}\Xman
\]
is a $\Ck$-diffeomorphism between $(\Xman,\Ustruct)$ and $(\Xman,\Vstruct)$, i.e.\ $\Vmap^{-1}\circ\Umap\in\DrAB{\kk}{\Ustruct}{\Vstruct}{\Xman}{\Xman}$.
\end{sublemma}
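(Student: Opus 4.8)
The plan is to verify the claim directly from the definition of a $\Ck$-diffeomorphism between locally Euclidean spaces carrying $\Ck$-structures. Write $\dif := \Vmap^{-1}\circ\Umap$, so that $\dif^{-1} = \Umap^{-1}\circ\Vmap$; I must check that $\dif$ is a $\Ck$-map with respect to $\Ustruct$ and $\Vstruct$, and that $\dif^{-1}$ is a $\Ck$-map with respect to $\Vstruct$ and $\Ustruct$. The key observation is that the two one-chart atlases are tailor-made for this computation. Namely, the chart $\Uchr := (\Xman,\Umap)$ is $\Ck$-compatible with $\Ustruct$ — the one-chart partial atlas $\{\Uchr\}$ has no overlapping pair of distinct charts, so it is trivially $\Ck$, and it belongs to $\Ustruct$ — and similarly $\Vchr := (\Xman,\Vmap)$ is $\Ck$-compatible with $\Vstruct$; moreover $\dif(\Xman) = \Vmap^{-1}(\Umap(\Xman)) = \Vmap^{-1}(\Vmap(\Xman)) = \Xman$. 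Hence for every point of $\Xman$ the three conditions in the definition of a $\Ck$-map can be tested using just the pair $\Uchr$, $\Vchr$.

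Carrying this out, the local presentation of $\dif$ in the charts $\Uchr$ and $\Vchr$ is
\[
    \Vmap\circ\dif\circ\Umap^{-1}
    \;=\; \Vmap\circ(\Vmap^{-1}\circ\Umap)\circ\Umap^{-1}
    \;=\; \id_{\Umap(\Xman)},
\]
the identity map of the open subset $\Umap(\Xman)\subset\bR^{n}$, which is of class $\Cinfty$, and in particular $\Ck$, at every point; thus $\dif\in\CrAB{\kk}{\Ustruct}{\Vstruct}{\Xman}{\Xman}$. Symmetrically, taking $\Vchr$ on the source and $\Uchr$ on the target, the presentation of $\dif^{-1}$ is $\Umap\circ\dif^{-1}\circ\Vmap^{-1} = \id_{\Vmap(\Xman)}$, so $\dif^{-1}\in\CrAB{\kk}{\Vstruct}{\Ustruct}{\Xman}{\Xman}$. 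By definition these two facts say precisely that $\dif$ is a $\Ck$-diffeomorphism between $(\Xman,\Ustruct)$ and $(\Xman,\Vstruct)$, i.e.\ $\dif\in\DrAB{\kk}{\Ustruct}{\Vstruct}{\Xman}{\Xman}$, as required.

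Alternatively, the result reads off Lemma~\ref{lm:act_homeo_on_atlases}: pushing the one-chart atlas $\{(\Xman,\Umap)\}$ forward along the homeomorphism $\dif$ gives $\dif^{*}\{(\Xman,\Umap)\} = \{(\dif(\Xman),\,\Umap\circ\dif^{-1})\} = \{(\Xman,\Vmap)\}$, which represents $\Vstruct$; hence $\dif^{*}\Ustruct = \Vstruct$, and part (3c) of that lemma yields $\dif\in\DrAB{\kk}{\Ustruct}{\dif^{*}\Ustruct}{\Xman}{\Xman} = \DrAB{\kk}{\Ustruct}{\Vstruct}{\Xman}{\Xman}$ (valid for $\kk\geq1$; the direct argument above covers $\kk=0$ as well). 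There is no genuine obstacle here: once one notes that a single-chart atlas is automatically $\Ck$ and that its chart is $\Ck$-compatible with the structure it represents, the fact that the relevant presentations are identity maps is immediate. The statement is isolated only because it will be invoked repeatedly when comparing minimal (two-chart) atlases of $\DLine$.
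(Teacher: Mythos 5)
Your proof is correct and follows essentially the same route as the paper: compute the local presentation of $\Vmap^{-1}\circ\Umap$ in the two single charts, observe it is the identity of $\Umap(\Xman)=\Vmap(\Xman)$, and conclude; your explicit check of the inverse and the alternative via Lemma~\ref{lm:act_homeo_on_atlases} are just harmless extras beyond what the paper writes.
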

\begin{proof}
Consider the following commutative diagram:
\[
    \xymatrix{
        \Xman \ar[r]^-{\Vmap^{-1}\circ\Umap}  \ar[d]_-{\Umap} & \Xman \ar[d]^-{\Vmap} \\
        \Umap(\Xman) \ar@{=}[r]^-{\id} & \Vmap(\Xman)
    }
\]
It means that the local presentation of $\Vmap^{-1}\circ\Umap$ in the charts $\Uchr$ and $\Vchr$ is the identity map $\id\colon\Umap(\Xman) = \Vmap(\Xman)$ which is a diffeomorphism of class $\Ck$.
Hence $\Vmap^{-1}\circ\Umap$ is a $\Ck$\nobreakdash-diffeomorphism.
\end{proof}

\begin{sublemma}\label{lm:DiffM_DiffN}
Let $\Mman$ and $\Nman$ be two locally Euclidean spaces endowed with $\Ck$-structres $\Ustruct$ and $\Vstruct$ respectively.
Suppose there exists a $\Ck$-diffeomorphism $\dif\in\DiffkABMN$.
Then we have the following commutative diagram of bijections:
\[
\xymatrix{
    & \DiffkABMN \ar[rd]^-{\ \kdif\,\mapsto\,\kdif\circ\dif^{-1}}
    \\
    \DrA{\kk}{\Ustruct}{\Mman}
    \ar[ur]^-{\gdif\,\mapsto\,\dif\circ\gdif \ }
    \ar[rr]^-{\gdif \, \mapsto \, \dif\circ\gdif\circ\dif^{-1}} &&
    \DrA{\kk}{\Vstruct}{\Nman}
}
\]
in which the lower horizontal arrow is an isomorphism of groups.
In particular, if the groups $\DrA{\kk}{\Ustruct}{\Mman}$ and $\DrA{\kk}{\Vstruct}{\Nman}$ are not isomorphic, then $(\Mman,\Ustruct)$ and $(\Nman,\Vstruct)$ are not $\Ck$-diffeomorphic.
\end{sublemma}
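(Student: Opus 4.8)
The plan is to treat this as a purely formal statement: every arrow in the diagram is a ``compose with $\dif^{\pm1}$'' map, so I would (1) check that post-composition, pre-composition, and two-sided conjugation by a $\Ck$-diffeomorphism send $\Ck$-diffeomorphisms to $\Ck$-diffeomorphisms, making the three arrows well defined; (2) check that the triangle commutes and that each arrow is a bijection by writing down explicit two-sided inverses; and (3) observe that the lower arrow is a group homomorphism, hence an isomorphism, the final ``in particular'' being its contrapositive.

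For (1), the only point that is not a triviality of set theory is that the class of $\Ck$-diffeomorphisms is closed under composition. I would argue as follows. Given $\dif\in\DiffkABMN$ and $\gdif\in\DrA{\kk}{\Ustruct}{\Mman}$ and a point $\px\in\Mman$, pick a chart $\Ck$-compatible with $\Ustruct$ about $\px$, a chart $\Ck$-compatible with $\Ustruct$ about $\gdif(\px)$, and a chart $\Ck$-compatible with $\Vstruct$ about $\dif(\gdif(\px))$, shrinking their domains so that the relevant images nest. Then a local presentation of $\dif\circ\gdif$ factors as (local presentation of $\dif$) $\circ$ (transition map between two charts $\Ck$-compatible with $\Ustruct$) $\circ$ (local presentation of $\gdif$); each factor is a $\Ck$-map between open subsets of Euclidean spaces, so the composite is $\Ck$ by the chain rule, whence $\dif\circ\gdif$ is $\Ck$ at the image of $\px$. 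Applying the same reasoning to $(\dif\circ\gdif)^{-1}=\gdif^{-1}\circ\dif^{-1}$ gives $\dif\circ\gdif\in\DiffkABMN$. The identical argument shows $\kdif\mapsto\kdif\circ\dif^{-1}$ carries $\DiffkABMN$ into $\DrA{\kk}{\Vstruct}{\Nman}$ and $\gdif\mapsto\dif\circ\gdif\circ\dif^{-1}$ carries $\DrA{\kk}{\Ustruct}{\Mman}$ into $\DrA{\kk}{\Vstruct}{\Nman}$; in particular all three arrows are well defined.

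For (2), commutativity is the set-level identity $(\dif\circ\gdif)\circ\dif^{-1}=\dif\circ\gdif\circ\dif^{-1}$. For bijectivity I would exhibit inverses: $\gdif\mapsto\dif\circ\gdif$ is inverted by $\kdif\mapsto\dif^{-1}\circ\kdif$ (which lands in $\DrA{\kk}{\Ustruct}{\Mman}$ by (1) with $\Mman$ and $\Nman$ swapped), $\kdif\mapsto\kdif\circ\dif^{-1}$ is inverted by $\gdif\mapsto\gdif\circ\dif$, and $\gdif\mapsto\dif\circ\gdif\circ\dif^{-1}$ is inverted by $\gdif\mapsto\dif^{-1}\circ\gdif\circ\dif$; in each case the two composites equal the identity by associativity and $\dif\circ\dif^{-1}=\id_{\Nman}$, $\dif^{-1}\circ\dif=\id_{\Mman}$. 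For (3), the lower map $c_\dif(\gdif)=\dif\circ\gdif\circ\dif^{-1}$ satisfies $c_\dif(\gdif_1)\circ c_\dif(\gdif_2)=c_\dif(\gdif_1\circ\gdif_2)$, so it is a homomorphism, and being bijective by (2) it is an isomorphism of groups; consequently, if $\DiffkABMN\ne\varnothing$ then $\DrA{\kk}{\Ustruct}{\Mman}\cong\DrA{\kk}{\Vstruct}{\Nman}$, which is the desired contrapositive.

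I do not anticipate a real obstacle here. The only ingredient above that is not immediate bookkeeping is the closure of $\Ck$-diffeomorphisms under composition and inversion used in (1); the excerpt has not isolated it as a lemma, but it is the routine chain-rule argument in local presentations indicated above, and if preferred it could be extracted as a short preliminary lemma before Lemma~\ref{lm:DiffM_DiffN}.
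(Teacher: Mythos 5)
Your proposal is correct; in fact the paper states Lemma~\ref{lm:DiffM_DiffN} without proof, treating it as routine, and your argument (well-definedness of the three composition maps via closure of $\Ck$-diffeomorphisms under composition and inversion checked in local presentations, formal commutativity and explicit two-sided inverses, and conjugation being a group homomorphism) is exactly the standard verification the authors leave implicit. Nothing is missing, and isolating the closure-under-composition fact as a short preliminary remark, as you suggest, would be a reasonable way to write it up.
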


The inverse statement, i.e.\ that every isomorphism $\alpha\colon\DrA{\kk}{\Ustruct}{\Mman} \to \DrA{\kk}{\Vstruct}{\Nman}$ is induced by some diffeomorphism between $\Mman$ and $\Nman$, is called a \term{reconstrucion theorem}.
It holds for Hausdorff second countable manifolds $\Mman$ and $\Nman$, see~\cite{Whittaker:AnnMath:1963, Rubin:TrAMS:1989, KowalikMichalikRybicki:TrMath:2008, RubinRybicki:TA:2012, ChenMann:DMJ:2023} for precise statements and references therein.

\section{Uniqueness of $\Ck$-structures on $\bR$ up to a diffeomorphism}\label{sect:diff_struct_on_bR}
In this section $\Mman$ is a one-dimensional locally Euclidean space and $k\in\bN\cup\{\infty\}$.
We will describe a way of joining charts on $\Mman$.
This technique is mentioned in~\cite{Sawin:MO}.

\begin{definition}
Let $\Uchr=(\Uset,\Umap)$ and $\Vchr=(\Vset,\Vmap)$ be two charts on $\Mman$.
Say that $\Uchr$ and $\Vchr$ are \term{$\Ck$-joinable} if they are $\Ck$-compatible and
\begin{align*}
&\Umap(\Uset)=(a;c), &
&\Umap(\Uset\cap\Vset)=\Vmap(\Uset\cap\Vset)=(b;c), &
&\Vmap(\Vset)=(b;d)
\end{align*}
for some $a<b<c<d\in\bR$.

In this case every open embedding $\Wmap\colon\Uset\cup\Vset\to\bR$, regarded as a chart $\Wchr=(\Uset\cup\Vset,\Wmap)$ on $\Mman$, will be called a \term{$\Ck$-join} of $\Uchr$ and $\Vchr$ if, see Figure~\ref{fig:joinable_charts},
\begin{enumerate}
\item $\Wmap(\Uset) = \Umap(\Uset) = (a;c)$ and $\Wmap(\Vset) = \Vmap(\Vset) = (b;d)$;
\item $\Wchr$ is $\Ck$-compatible with $\Uchr$ and $\Vchr$;
\item $\restr{\Wmap}{\Uset\setminus\Vset} = \restr{\Umap}{\Uset\setminus\Vset} \colon \Uset\setminus\Vset\to(a;c]$ and $\restr{\Wmap}{\Vset\setminus\Uset} = \restr{\Vmap}{\Vset\setminus\Uset}\colon \Vset\setminus\Uset\to[b;d)$.
\end{enumerate}
\end{definition}

\begin{figure}[!htbp]
\includegraphics[height=5cm]{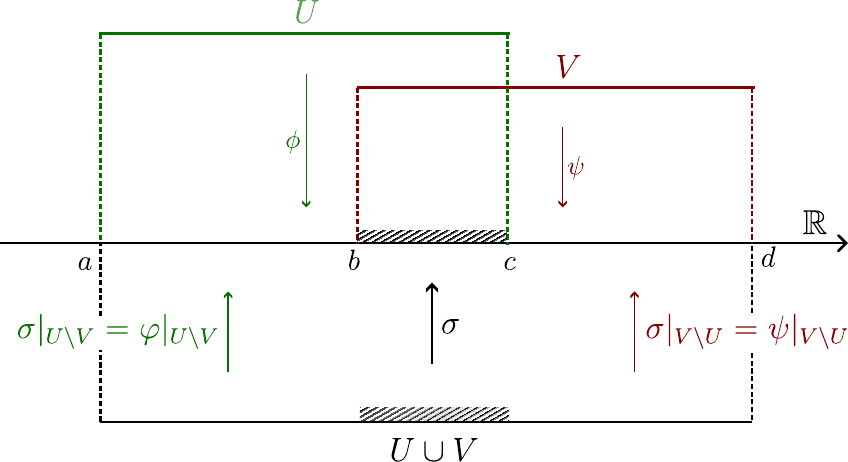}
\caption{$\Ck$-joinable charts}\label{fig:joinable_charts}
\end{figure}

The first condition implies that $\Wmap(\Uset\cap\Vset) = (c;b)$, while the latter condition can be formulated so that \term{$\Wmap$ differs from $\Umap$ and $\Vmap$ only on the intersection $\Uset\cap\Vset$}, or equivalently, that the transition maps $\Wmap\circ\Umap^{-1}$ and $\Wmap\circ\Vmap^{-1}$ are identity on $\Uset\setminus\Vset$ and $\Vset\setminus\Uset$ respectively.

Our aim is to establish the following theorem
\begin{theorem}\label{th:joinable_charts}
\begin{enumerate}[leftmargin=*, itemsep=1ex]
\item\label{th:joinable_charts:join_exist}
Every pair of $\Ck$-joinable charts $\Uchr$ and $\Vchr$ has a $\Ck$-join.

\item\label{th:joinable_charts:can_replace}
Let $\UAtlas$ be a $\Ck$-atlas on $\Mman$ having a pair of $\Ck$-joinable charts $\Uchr$ and $\Vchr$ such that for every other chart $\Ychr=(\Yset,\Ymap)$ from $\UAtlas$ we have that
\[
    \Yset\cap\Uset\cap\Vset = \varnothing.
\]
Then for any $\Ck$-join $\Wchr$ of $\Uchr$ and $\Vchr$ the following collection of charts
\[ \VAtlas = \bigl(  \UAtlas \setminus \{ \Uchr, \Vchr \} \bigr) \cup \{ \Wchr\}, \]
obtained by replacing $\Uchr$ and $\Vchr$ with $\Wchr$, is still a $\Ck$-atlas on $\Mman$ being also $\Ck$-compatible with $\UAtlas$.
\end{enumerate}
\end{theorem}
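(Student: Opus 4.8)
The plan is to prove~\ref{th:joinable_charts:join_exist} by an explicit construction of a join and then to deduce~\ref{th:joinable_charts:can_replace} by a short compatibility argument. For~\ref{th:joinable_charts:join_exist}, write $g:=\Vmap\circ\Umap^{-1}\colon(b;c)\to(b;c)$ for the transition map of $\Uchr$ and $\Vchr$; by $\Ck$-compatibility it is a $\Ck$-diffeomorphism, it is increasing, and $g(x)\to b$ as $x\to b^{+}$, $g(x)\to c$ as $x\to c^{-}$ (a decreasing transition would make $\Umap^{-1}(b)\in\Uset\setminus\Vset$ and $\Vmap^{-1}(c)\in\Vset\setminus\Uset$ non-separated points of $\Uset\cup\Vset$, so that no open embedding $\Uset\cup\Vset\to\bR$ could exist; cf.\ Figure~\ref{fig:joinable_charts}). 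I would then seek $\Ck$-diffeomorphisms $h_{1}\colon(a;c)\to(a;c)$ and $h_{2}\colon(b;d)\to(b;d)$ with $h_{1}=\id$ on $(a;b_{1}]$ and $h_{1}=g$ on $[b_{2};c)$ for suitable $b<b_{1}<b_{2}<c$, and with $h_{2}=\id$ on $[c;d)$ and $h_{2}=h_{1}\circ g^{-1}$ on $(b;c)$, and then define $\Wmap:=h_{1}\circ\Umap$ on $\Uset$ and $\Wmap:=h_{2}\circ\Vmap$ on $\Vset$. These two formulas agree on $\Uset\cap\Vset$, since there $h_{2}\circ\Vmap=h_{1}\circ g^{-1}\circ\Vmap=h_{1}\circ\Umap$; hence $\Wmap$ is a well-defined open embedding of $\Uset\cup\Vset$ into $\bR$ (its restrictions to $\Uset$ and $\Vset$ are homeomorphisms onto the open sets $(a;c)$ and $(b;d)$), and the three defining properties of a $\Ck$-join hold immediately: $\Wmap(\Uset)=(a;c)$, $\Wmap(\Vset)=(b;d)$; $\Wmap\circ\Umap^{-1}=h_{1}$ and $\Wmap\circ\Vmap^{-1}=h_{2}$ are $\Ck$-diffeomorphisms, giving $\Ck$-compatibility with $\Uchr$ and $\Vchr$; and $h_{1}|_{(a;b]}=\id$, $h_{2}|_{[c;d)}=\id$ give $\restr{\Wmap}{\Uset\setminus\Vset}=\restr{\Umap}{\Uset\setminus\Vset}$ and $\restr{\Wmap}{\Vset\setminus\Uset}=\restr{\Vmap}{\Vset\setminus\Uset}$.

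The only real work is producing $h_{1}$, and I expect this interpolation-of-diffeomorphisms step to be the main obstacle. Choose $b_{2}\in(b;c)$ and then $b_{1}\in(b;\min\{b_{2},g(b_{2})\})$; I would glue $\id$ on $(a;b_{1}]$, $g$ on $[b_{2};c)$, and on $[b_{1};b_{2}]$ an increasing $\Ck$-diffeomorphism $\chi\colon[b_{1};b_{2}]\to[b_{1};g(b_{2})]$ whose $\kk$-jet at $b_{1}$ equals that of $\id$ and whose $\kk$-jet at $b_{2}$ equals that of $g$. To keep $\chi$, and hence $h_{1}$, monotone I would prescribe the \emph{derivative} rather than $\chi$ itself: pick a positive $\Ck$-function $w$ on $[b_{1};b_{2}]$ whose jet at $b_{1}$ is the jet of the constant function $1$, whose jet at $b_{2}$ is the jet of $g'$, and with $\int_{b_{1}}^{b_{2}}w=g(b_{2})-b_{1}$ (the last normalization being arranged by making $w$ small in the interior and then adding a nonnegative bump supported there), and set $\chi(x):=b_{1}+\int_{b_{1}}^{x}w$. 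Then $\chi'=w>0$, $\chi(b_{1})=b_{1}$, $\chi(b_{2})=g(b_{2})$, the prescribed jets match, and $h_{1}$ is a genuine $\Ck$-diffeomorphism of $(a;c)$ (an increasing $\Ck$-map with positive derivative and the correct image); moreover $h_{2}:=h_{1}\circ g^{-1}$ coincides with $\id$ on a neighbourhood of $c$ inside $(b;c)$ (because $h_{1}=g$ there), so the two formulas for $h_{2}$ glue to a $\Ck$-diffeomorphism of $(b;d)$. For $\kk=\infty$ one uses Borel's lemma to realize the prescribed $\infty$-jets of $w$.

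For~\ref{th:joinable_charts:can_replace}, the domains of $\VAtlas$ still cover $\Mman$, so it remains only to verify that $\Wchr$ is $\Ck$-compatible with every chart of $\UAtlas$. With $\Uchr$ and $\Vchr$ this is part of the definition of a $\Ck$-join. For any other $\Ychr=(\Yset,\Ymap)\in\UAtlas$: on the open set $\Ymap(\Yset\cap\Uset)$ the transition $\Wmap\circ\Ymap^{-1}$ equals $(\Wmap\circ\Umap^{-1})\circ(\Umap\circ\Ymap^{-1})$, a composition of $\Ck$-diffeomorphisms since $\Ychr$ and $\Uchr$ are $\Ck$-compatible in $\UAtlas$; likewise $\Wmap\circ\Ymap^{-1}$ is a $\Ck$-diffeomorphism on $\Ymap(\Yset\cap\Vset)$ via $\Vchr$. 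By hypothesis $\Yset\cap\Uset\cap\Vset=\varnothing$, so these two open subsets of $\Ymap(\Yset\cap(\Uset\cup\Vset))$ are disjoint, and, $\Wmap$ being injective, so are their images; hence $\Wmap\circ\Ymap^{-1}$ is a $\Ck$-diffeomorphism on all of $\Ymap(\Yset\cap(\Uset\cup\Vset))$. Therefore $\UAtlas\cup\{\Wchr\}$ is a $\Ck$-atlas, whence its subcollection $\VAtlas$ is a $\Ck$-atlas as well and is $\Ck$-compatible with $\UAtlas$.
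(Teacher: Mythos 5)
Your proposal is correct and follows essentially the same route as the paper: you construct the join by post-composing $\Umap$ with an interpolating $\Ck$-diffeomorphism of $(a;c)$ obtained by integrating a positive function interpolating between $1$ and the derivative of the transition map (this is exactly the content of the paper's Lemma~\ref{lm:glue_id_and_diff}), then extend the resulting transition to $\Vmap$ by the identity and glue, and your part~\ref{th:joinable_charts:can_replace} argument uses the triple-intersection hypothesis just as the paper does. The only caveat is your parenthetical claim that the transition map is increasing, which as phrased presupposes that $\Uset\cup\Vset$ embeds in $\bR$ (i.e.\ that a join exists); however, the paper's own proof makes the same implicit orientation assumption when invoking Lemma~\ref{lm:glue_id_and_diff}, so this is not a divergence from the paper's argument.
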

\begin{proof}
First we establish the following simple lemma.
\newcommand\va{b}
\newcommand\vb{c}
\begin{sublemma}\label{lm:glue_id_and_diff}
Let
$\va<\vb\in\bR$ and
$\gdif\colon(\va;\vb)\to(\va;\vb)$ be a $\Ck$-diffeomorphism preserving orientation.
Then there exist $\eps>0$ and a diffeomorphism $p\colon(\va;\vb)\to(\va;\vb)$ such that
\begin{align*}
    p(x)&=x        \ \text{for} \ x\in(\va;\va+\eps], &
    p(x)&=\gdif(x) \ \text{for} \ x\in[\vb-\eps;\vb).
\end{align*}
\end{sublemma}
\begin{proof}
It suffices to find $\eps>0$ and a strictly positive function $\gamma\colon[\va;\vb)\to(0;+\infty)$ such that
\begin{align*}
&\gamma(x)=1         \ \text{for} \ x\in[\va;\va+\eps], &
&\gamma(x)=\gdif'(x) \ \text{for} \ x\in[\vb-\eps;\vb), &
&\smallint_{\va}^{\vb}\gamma(s)ds = \vb-\va,
\end{align*}
and then put $p(x) = \va + \int_{\va}^{\px}\gamma(s)ds$.

As $\gdif$ is a preserving orientation homeomorphism of $(\va;\vb)$, we have that
\[ \va = \lim\limits_{\px\to \va+0}\gdif(\px) <  \lim\limits_{\px\to \vb-0}\gdif(\px) = \vb. \]
Then one can easily find $\eps>0$ and two $\Cinfty$ functions $\alpha,\beta\colon[\va;\vb]\to[0;+\infty)$ such that
\begin{gather*}
\alpha=1 \ \text{on} \ [\va;\va+\eps], \qquad
\alpha=0 \ \text{on} \ [\va+2\eps;\vb-2\eps],   \qquad
\alpha=\gfunc' \ \text{on} \ [\vb-\eps;\vb], \\
A:=\vb-\va - \smallint_{\va}^{\vb}\alpha(s)ds > 0, \qquad
\beta^{-1}(0)=[\va;\va+\eps]\cup[\vb-\eps;\vb].
\end{gather*}
Denote $B = \smallint_{\va}^{\vb}\beta(s)ds$.
Then the function $\gamma = \alpha + (A/B) \beta$ has the above properties.
\end{proof}

Now we can prove Theorem~\ref{th:joinable_charts}.

\ref{th:joinable_charts:join_exist}
Let $\Uchr=(\Uset,\Umap)$ and $\Vchr=(\Vset,\Vmap)$ be two $\Ck$-joinable charts on $\Mman$.
Not loosing generality, assume that
\begin{align*}
&\Umap(\Uset)=(\ua;\uc), &
&\Umap(\Uset\cap\Vset)=\Vmap(\Uset\cap\Vset)=(\ub;\uc), &
&\Vmap(\Vset)=(\ub;\ud).
\end{align*}
Denote $\Wset=\Uset\cup\Vset$.
We need to construct a $\Ck$-join $\Wchr=(\Wset, \Wmap)$ for these charts.

Let $\UVTrMap=\Vmap\circ\Umap^{-1}\colon(\ub;\uc)\to(\ub;\uc)$ be the transition map from $\Uchr$ to $\Vchr$.
Then by Lemma~\ref{lm:glue_id_and_diff} one can find $\eps>0$ and a $C^k$-diffeomorphism $p\colon(\ua;\uc)\to(\ua;\uc)$ such that
\begin{align}
\label{equ:join:p_id_near_a}
p(\px) &= \px                \ \text{for} \ \px\in(\ua;\ub+\varepsilon), \\
\label{equ:join:p_is_g_near_c}
p(\px) &= \UVTrMap(\px)      \ \text{for} \ \px\in(\uc-\varepsilon;\uc).
\end{align}

Consider the homeomorphism $\tUmap := p \circ \Umap\colon\Uman\to(\ua;\uc)$ regarded as a chart $\tUchr=(\Uset,\tUmap)$.
Then by~\eqref{equ:join:p_id_near_a}, $\tUmap=\Umap$ on $(\ua;\ub+\eps)$, whence
\[
    \tUmap(\Uset\cap\Vset)=\Umap(\Uset\cap\Vset)=\Vmap(\Uset\cap\Vset) = (\ub;\uc).
\]
Moreover, by~\eqref{equ:join:p_is_g_near_c}, the transition map from $\Vchr$ to $\tUchr$, being a $\Ck$-diffeomorphism
\[
    q := \tUmap \circ \Vmap^{-1} = p\circ \Umap \circ \Vmap^{-1}  = p\circ\UVTrMap^{-1} \colon (\ub;\uc)\to(\ub;\uc),
\]
is fixed on $(\uc-\eps';\uc)$ for a sufficiently small $\eps'>0$.
Hence $q$ extends by the identity on $(\uc;\ud)$ to a self-diffeomorphism of $(\ub;\ud)$, which we will denote by the same letter $q$.

Since $\restr{\tUmap}{\Uset\cap\Vset} = q\circ\restr{\Vmap}{\Uset\cap\Vset}$, we get a wel-defined homeomorphism
\[
    \Wmap:\Wset=\Uset\cup\Vset\to(\ua;\ud),
    \qquad
    \Wmap(\px) =
    \begin{cases}
        \tUmap(\px),      & \px\in\Uset, \\
        q\circ\Vmap(\px), & \px\in\Vset,
    \end{cases}
\]
i.e.\ the chart $\Wchr=(\Wset, \Wmap)$.

Note that $p$ and $q$ are the transition maps from $\Uchr$ and $\Vchr$ to $\Wchr$ respectively:
\begin{align*}
\Wmap\circ\Umap^{-1} &= \tUmap \circ \Umap^{-1} = p\circ \Umap\circ\Umap^{-1} = p\colon(\ua;\uc)\to(\ua;\uc), \\
\Wmap\circ\Vmap^{-1} &= q\circ\Vmap \circ \Vmap^{-1} = q\colon(\ub;\ud)\to(\ub;\ud).
\end{align*}
Since they are $\Ck$-diffeomorphisms, $p$ is fixed on $\Uset\setminus\Vset$, and $q$ is fixed on $\Vset\setminus\Uset$, we see that $\Wchr$ is a $\Ck$-join of $\Uchr$ and $\Vchr$.

\ref{th:joinable_charts:can_replace}
Let $\Uchr=(\Uset,\Umap)$ and $\Vchr=(\Vset,\Vmap)$ be two $\Ck$-joinable charts in the atlas $\UAtlas$ such that for every other chart $\Ychr=(\Yset,\Ymap)$ from $\UAtlas$ we have that $\Yset\cap\Uset\cap\Vset = \varnothing$.
Let also $\Wchr$ be any $\Ck$-join of $\Uchr$ and $\Vchr$.
We need to prove that
\[ \VAtlas = \bigl(  \UAtlas \setminus \{ \Uchr, \Vchr \} \bigr) \cup \{ \Wchr\}, \]
is a $\Ck$-atlas on $\Mman$ being also $\Ck$-compatible with $\UAtlas$.

Since $\Wset=\Uset\cup\Vset$, the domains of charts of $\VAtlas$ cover $\Mman$.

Let us show that \term{$\Wchr$ is $\Ck$-compatible with all other charts $\Ychr=(\Yset,\Ymap) \in \VAtlas$ distinct from $\Wchr$}, i.e.\ the transition map $\Wmap\circ\Ymap^{-1}\colon\Ymap(\Yset\cap\Wset)\to\Wmap(\Yset\cap\Wset)$ is a $\Ck$-diffeomorphism.

Indeed, since $\Yset\cap\Uset\cap\Vset = \varnothing$, we have that $\Yset\cap\Wset$ is a union of disjoint open subsets $\Yset\cap\Uman$ and $\Yset\cap\Vman$, i.e.\ $\Yset\cap\Wset = (\Yset\cap\Uset) \sqcup (\Yset\cap\Vset)$.
Since $\Wmap$ coincides with $\Umap$ on $\Uset\setminus\Vset \supset \Yset\cap\Uset$ and with
$\Vmap$ on $\Vset\setminus\Uset \supset \Yset\cap\Vset$, we see that
\begin{align}\label{equ:trans_maps:w_y}
    &
    \restr{\Wmap\circ\Ymap^{-1}}{\Ymap(\Yset \cap \Uset)} =
    \restr{\Umap\circ\Ymap^{-1}}{\Ymap(\Yset \cap \Uset)},
    &
    &
    \restr{\Wmap\circ\Ymap^{-1}}{\Ymap(\Yset \cap \Vset)} =
    \restr{\Vmap\circ\Ymap^{-1}}{\Ymap(\Yset \cap \Vset)}.
\end{align}
But $\UAtlas$ is a $\Ck$-atlas, i.e.\ $\Ychr$ is $\Ck$-compatible with $\Uchr$ and $\Vchr$, whence both restrictions~\eqref{equ:trans_maps:w_y} are $\Ck$, and therefore $\Wmap\circ\Ymap^{-1}$ is $\Ck$ as well.
\end{proof}

\subsection{Chain-like atlases}\label{sect:chain_like_atlases}
Let $\VAtlas = \{ \Vchri=(\Vseti,\Vmapi) \}_{\vi\in\bZ}$ be a countable $\Ck$-atlas on the real line $\bR$.
Say that $\VAtlas$ is \term{chain-like} if for each $\vi\in\bZ$ the following conditions hold:
\begin{enumerate}[label={\rm(G\arabic*)}, itemsep=1ex]
\item\label{enum:def:chain_like_atlas:joinable}
the charts $\Vchrii{\vi}$ and $\Vchrii{\vi+1}$ are $\Ck$-joinable;

\item\label{enum:def:chain_like_atlas:triple_inters}
$\Vsetii{\ui} \cap \Vsetii{\vi} \cap  \Vsetii{\vi+1} = \varnothing$ for all $\ui \ne \vi,\vi+1$.
\end{enumerate}

Equivalently, suppose that $\Vmapi(\Vseti)=(\aii{\vi};\bii{\vi})$ for some $\aii{\vi}<\bii{\vi}\in\bR\cup\{\pm\infty\}$, $\vi\in\bZ$.
Then $\VAtlas$ is \term{chain-like} if for every $\vi\in\bZ$ we have that
\begin{itemize}[itemsep=1ex]
\item all $\aii{\vi},\bii{\vi}$ are finite,
\item $\aii{\vi} < \bii{\vi-1} < \aii{\vi+1} < \bii{\vi}$,
\item $\Vmapii{\vi}(\Vsetii{\vi}\cap\Vsetii{\vi+1})=\Vmapii{\vi+1}(\Vsetii{\vi}\cap\Vsetii{\vi+1})=(\aii{\vi+1};\bii{\vi})$.
\end{itemize}

Notice that for any pairs of strictly increasing two-sided sequences $\{ x_i\}_{i\in\bZ}$ and $\{y_i\}_{i\in\bZ}$ in $\bR$ there exists a $\Cinfty$-diffeomorphism $q\colon\bR\to\bR$ such that $q(x_i)=y_i$ for $i\in\bZ$.
Hence one can always assume that for such a chain-like atlas we have that
\begin{align}\label{equ:lim_ai_bi}
    &\lim_{\vi\to-\infty} \aii{\vi} = \lim_{\vi\to-\infty} \bii{\vi} = -\infty,
    &
    &\lim_{\vi\to+\infty} \aii{\vi} = \lim_{\vi\to+\infty} \bii{\vi} = +\infty,
\end{align}
which is equivalent to the assumption that
\begin{enumerate}[label={\rm(G\arabic*)}, resume, itemsep=1ex]
\item\label{enum:def:chain_like_atlas:infty}
$\mathop{\cup}\limits_{\vi\in\bZ} \Vmapii{\vi}(\Vsetii{\vi}) = \bR$.
\end{enumerate}

\begin{subtheorem}\label{th:atlases_on_R}
\begin{enumerate}[label={\rm(\arabic*)}, leftmargin=*, itemsep=1ex]
\item\label{enum:th:atlases_on_R:chain_like_exist}
For every $\Ck$-atlas $\UAtlas=\{(\Useti, \Umapi)\}_{\ui\in\UInd}$ on $\bR$ there exists a chain-like $\Ck$-atlas $\VAtlas$ being $\Ck$-compatible with $\VAtlas$.

\item\label{enum:th:atlases_on_R:one_chart}
Let $\VAtlas$ be a chain-like $\Ck$-atlas on $\bR$.
Then there exists a homeomorphism $\Wmap\colon\bR\to\bR$ such that the chart $(\bR,\Wmap)$ is $\Ck$-compatible with $\VAtlas$.
\end{enumerate}
\end{subtheorem}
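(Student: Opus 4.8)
\textbf{Part \ref{enum:th:atlases_on_R:chain_like_exist}.} The plan is first to replace $\UAtlas$ by a more manageable $\Ck$-compatible atlas, and then to extract a chain-like one. Every point of $\bR$ lies in the domain of some chart of $\UAtlas$; restricting that chart to the preimage of a small bounded open interval and, if necessary, composing it with $\px\mapsto-\px$, one obtains a chart with connected bounded domain, bounded image, and preserving orientation. These charts are pairwise $\Ck$-compatible and each is $\Ck$-compatible with $\UAtlas$, so they form a $\Ck$-atlas $\Ck$-compatible with $\UAtlas$; by second countability of $\bR$ a countable subfamily $\{(\Usetii{\ui},\Umapii{\ui})\}_{\ui\in\bN}$ still covers $\bR$. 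Next I would choose partition points: every compact interval $[-R;R]$ is covered by finitely many $\Usetii{\ui}$ and hence has a Lebesgue number $\lambda(R)>0$, which can be taken positive, continuous and non-increasing in $R$; a recursion of the form $u_0:=0$, $u_{\ell\pm1}:=u_\ell\pm c\,\lambda(|u_\ell|+c)$ with a small constant $c$ then yields a strictly increasing bi-infinite sequence $(u_\ell)_{\ell\in\bZ}$ which cannot accumulate anywhere (because $\lambda$ is positive and continuous), so $u_\ell\to\pm\infty$ as $\ell\to\pm\infty$, and every window $[u_{\ell-2};u_{\ell+2}]$ has small enough diameter to lie in some chart domain. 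Putting $\Vsetii{\vi}:=(u_{2\vi};u_{2\vi+3})$ one obtains bounded open interval charts covering $\bR$ with $\Vsetii{\vi}\cap\Vsetii{\vi+1}$ a nonempty interval, $\Vsetii{\vi}\cap\Vsetii{\vi'}=\varnothing$ for $|\vi-\vi'|\ge2$, the two overlaps of $\Vsetii{\vi}$ with its neighbours disjoint, and each $\Vsetii{\vi}$ contained in some domain of $\UAtlas$. Restricting the appropriate charts to the $\Vsetii{\vi}$, fixing orientations, and then adjusting them one at a time (say in the order $\vi=0,1,-1,2,-2,\dots$) by composing with the unique increasing affine map carrying the image of the overlap with the already-fixed predecessor onto the fixed one, produces a chain-like $\Ck$-atlas $\VAtlas$ each of whose charts is a restriction of a chart of $\UAtlas$ post-composed with an affine or a reflection map; hence $\VAtlas$ is $\Ck$-compatible with $\UAtlas$. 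The disjointness of the two overlaps inside each $\Vsetii{\vi}$ is exactly what forces the strict inequalities required in the definition of a chain-like atlas.

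\textbf{Part \ref{enum:th:atlases_on_R:one_chart}.} Here the idea is to glue all the charts of a chain-like atlas $\VAtlas=\{\Vchrii{\vi}=(\Vsetii{\vi},\Vmapii{\vi})\}_{\vi\in\bZ}$ together by a telescope of $\Ck$-joins. By the observation made just before the theorem we may assume condition \ref{enum:def:chain_like_atlas:infty}, so that $\bigcup_{\vi}\Vmapii{\vi}(\Vsetii{\vi})=\bR$. Write $\Wset^{(n)}:=\bigcup_{|\vi|\le n}\Vsetii{\vi}$. I would build charts $(\Wset^{(n)},\Wmapii{n})$ inductively, with $\Wmapii{0}:=\Vmapii{0}$, passing from $n$ to $n+1$ by first joining with $\Vchrii{n+1}$ and then with $\Vchrii{-(n+1)}$. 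At each step the two charts to be joined are $\Ck$-joinable --- the required shape of their images follows from the defining inequalities of a chain-like atlas, using that a $\Ck$-join preserves the images of the two domains being joined --- so Theorem~\ref{th:joinable_charts}\ref{th:joinable_charts:join_exist} supplies the join; and condition \ref{enum:def:chain_like_atlas:triple_inters} provides exactly the hypothesis needed to apply Theorem~\ref{th:joinable_charts}\ref{th:joinable_charts:can_replace}, so after each replacement the collection is still a $\Ck$-atlas $\Ck$-compatible with $\VAtlas$; in particular every $\Wmapii{n}$ is $\Ck$-compatible with all charts of $\VAtlas$. Since a $\Ck$-join alters a chart only on the overlap of the two charts being joined, one checks --- keeping track of the images $\Wmapii{n}(\Vsetii{\vi})$ and of where $\Wmapii{n}$ still coincides with $\Vmapii{\vi}$ --- that $\Wmapii{n+1}=\Wmapii{n}$ on $\Wset^{(n-1)}$. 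Consequently, for each $\px\in\bR=\bigcup_n\Wset^{(n)}$ the sequence $(\Wmapii{n}(\px))_n$ is eventually constant, and $\Wmap(\px):=\lim_n\Wmapii{n}(\px)$ defines a map $\bR\to\bR$ whose restriction to each $\Vsetii{\vi}$ is a homeomorphism onto $\Vmapii{\vi}(\Vsetii{\vi})$. Disjointness of the non-adjacent image intervals yields injectivity of $\Wmap$, so by condition \ref{enum:def:chain_like_atlas:infty} it is a homeomorphism of $\bR$; and since near every point $\Wmap$ agrees with some $\Wmapii{n}$, the chart $(\bR,\Wmap)$ is $\Ck$-compatible with $\VAtlas$.

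\textbf{Expected obstacle.} In Part \ref{enum:th:atlases_on_R:chain_like_exist} the genuinely delicate point is the combinatorics of producing a \emph{bi-infinite} family of interval charts whose short windows still fit inside single domains of $\UAtlas$ while the partition points run off to $\pm\infty$ (and whose neighbouring overlaps stay disjoint, to get the strict inequalities); this is precisely why a positive continuous Lebesgue-number function is used above. Part \ref{enum:th:atlases_on_R:one_chart} is almost entirely driven by Theorem~\ref{th:joinable_charts}: the only care needed is the bookkeeping showing that successive joins neither destroy the joinability of the next pair nor undo one another, and both are controlled by condition \ref{enum:def:chain_like_atlas:triple_inters}.
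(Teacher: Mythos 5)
Your proposal is correct and follows essentially the same route as the paper: in part~\ref{enum:th:atlases_on_R:chain_like_exist} you refine $\UAtlas$ to a countable chain of bounded interval charts subordinate to it and then renormalize so that the overlap images match, and in part~\ref{enum:th:atlases_on_R:one_chart} you perform the same telescope of $\Ck$-joins via Theorem~\ref{th:joinable_charts}, using condition~\ref{enum:def:chain_like_atlas:triple_inters} to keep replacing charts, observing stabilization on the exhausting pieces, and passing to the limit under the normalization~\ref{enum:def:chain_like_atlas:infty}. The only differences are implementation details in part~\ref{enum:th:atlases_on_R:chain_like_exist}: the paper gets the refined cover directly from paracompactness of $\bR$ and adjusts each restricted chart at once by a $\Ck$-diffeomorphism fixing the two division points, whereas you build the cover by an explicit Lebesgue-number recursion and normalize the charts sequentially by increasing affine maps — both are valid.
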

\begin{proof}
\ref{enum:th:atlases_on_R:chain_like_exist}
Since $\bR$ is one-dimensional and paracompact, and the domains of $\UAtlas$ constitute an open cover of $\bR$, one can find a countable $\{\Vseti\}_{\vi\in\bZ}$ cover of $\bR$ such that for every $\vi\in\bZ$
\begin{enumerate}[label={(\alph*)}, leftmargin=*, itemsep=1ex]
\item\label{enum:lm:unique_Ck_struct:R:intersect_disjoint}
$\Vseti = (\aii{\vi};\bii{\vi})$ for some $\aii{\vi} < \bii{\vi}\in\bR$ such that $\aii{\vi} < \bii{\vi-1} < \aii{\vi+1} < \bii{\vi}$, i.e.\ $\Vseti$ intersects only $\Vsetii{\vi-1}$ and $\Vsetii{\vi+1}$ and those intersections are proper subsets of $\Vseti$, so in particular,
\begin{equation}\label{equ:triple_intersections}
    \Vsetii{\ui} \cap \Vsetii{\vi}\cap\Vsetii{\vi+1} = \varnothing, \qquad \ui \ne \vi,\vi+1;
\end{equation}
\item\label{enum:lm:unique_Ck_struct:R:finer_cover}
there exists a chart $\Uchrii{\vi}=(\Usetii{\vi},\Umapii{\vi}) \in \UAtlas$ with $\Vsetii{\vi} \subset \Usetii{\vi}$;
\end{enumerate}

Note that for any $\Ck$-embdding $p_{\vi}\colon\Umapii{\vi}(\Vsetii{\vi})\to\bR$, the chart $\Vchrii{\vi}:=(\Vsetii{\vi},\Vmapii{\vi}:=p_{\vi}\circ\restr{\Umapii{\vi}}{\Vsetii{\vi}})$ is still $\Ck$-compatible with $\UAtlas$.
In particular, one can choose $p_{\vi}$ so that
\begin{align}\label{equ:fixing_ends_of_Vi}
    &\Vmapii{\vi}(\Vsetii{\vi})                      = (\aii{\vi}   ;  \bii{\vi}), &
    &\Vmapii{\vi}(\Vsetii{\vi-1} \cap\Vsetii{\vi})   = (\aii{\vi}   ;  \bii{\vi-1}) &
    &\Vmapii{\vi}(\Vsetii{\vi}   \cap\Vsetii{\vi+1}) = (\aii{\vi+1} ;  \bii{\vi}),
\end{align}
which will imply that $\Vchrii{\vi}$ and $\Vchrii{\vi+1}$ are $\Ck$-compatible for all $\vi\in\bZ$.

Indeed, note $\Umapii{\vi}(\Vsetii{\vi}) = (\cii{\vi}; \dii{\vi})$ for some $\cii{\vi} < \dii{\vi} \in \bR$.
Take $p_{\vi}\colon(\cii{\vi}; \dii{\vi}) \to (\aii{\vi}; \bii{\vi})$ to be any $\Ck$-diffeomorphism such that $p_{\vi}\circ\Umapii{\vi}(\bii{\vi-1}) = \bii{\vi-1}$ and $p_{\vi}\circ\Umapii{\vi}(\aii{\vi+1}) = \aii{\vi+1}$.
Then the composite map
\[
    \Vmapii{\vi} := p_{\vi}\circ\restr{\Umapii{\vi}}{\Vsetii{\vi}}\colon
        \Vsetii{\vi}=(\aii{\vi}; \bii{\vi})
        \xrightarrow{~\Umapii{\vi}~}
        (\cii{\vi}; \dii{\vi})
        \xrightarrow{~p_{\vi}~}
        (\aii{\vi}; \bii{\vi})
        \subset
        \bR
\]
has the desired property~\eqref{equ:fixing_ends_of_Vi}.

Then~\eqref{equ:fixing_ends_of_Vi} and~\eqref{equ:triple_intersections} mean respectively conditions~\ref{enum:def:chain_like_atlas:joinable} and~\ref{enum:def:chain_like_atlas:triple_inters}, so
$\VAtlas:=\{ (\Vsetii{\vi},\Vmapii{\vi}) \}_{\vi\in\bZ}$ is a chain-like $\Ck$-atlas on $\bR$.

\ref{enum:th:atlases_on_R:one_chart}
Suppose now that $\bR$ has a chain-like atlas $\VAtlas = \{ \Vchri=(\Vseti,\Vmapi) \}_{\vi\in\bZ}$.
As noted above, one can assume that condition~\eqref{equ:lim_ai_bi} is also satisfied.

Then by Theorem~\ref{th:joinable_charts} there exists a $\Ck$-join $\Wchr_{0,1}=(\Vsetii{0} \cup \Vsetii{1}, \Wmapii{0,1})$ of  $\Vchrii{0}$ and $\Vchrii{1}$, such that the collection of charts
\[
    \VAtlas_{0,1} := \bigl( \VAtlas  \setminus\{ \Vchrii{0}, \Vchrii{1} \} \bigr) \cup \{\Wchrii{0,1}\}
\]
is a $\Ck$-atlas being $\Ck$-compatible with $\VAtlas$.
By definition, $\Wchrii{0,1}$ differs from $\Vmapii{0}$ and $\Vmapii{1}$ only on the intersection $\Vsetii{0} \cap \Vsetii{1}$ which is disjoint from other chars from $\VAtlas$.
This easily implies that $\VAtlas_{0,1}$ is still chain-like.

In particular, the charts $\Vchrii{-1}$ and $\Wchr_{0,1}$ are $\Ck$-joinable, and again by Theorem~\ref{th:joinable_charts} for any of their $\Ck$-join
\[
    \Wchr_{-1,1} =
    \bigl(
        \Vsetii{-1} \cup \Wsetii{0,1} = \Vsetii{-1} \cup \Vsetii{0} \cup \Vsetii{1}, \
        \Wmapii{-1,1}
    \bigr)
\]
the collection of charts
\[
    \VAtlas_{-1,1} := \bigl( \VAtlas  \setminus\{ \Vchrii{-1}, \Wchrii{0,1} \} \bigr) \cup \{\Wchrii{-1,1}\}
    =
    \bigl( \VAtlas  \setminus\{ \Vchrii{-1}, \Vchrii{0}, \Vchrii{1} \} \bigr) \cup \{\Wchrii{-1,1}\}
\]
is a chain-like $\Ck$-atlas being $\Ck$-compatible with $\VAtlas_{0,1}$ and therefore with $\VAtlas$.
Again notice that $\Wmapii{-1,1}$ differs from with $\Vmapii{-1}$ and $\Wmapii{0,1}$ only on the intersection
\[ \Vsetii{-1} \cap(\Vsetii{0} \cap \Vsetii{1})=\Vsetii{-1} \cap\Vsetii{0}.\]
In particular, $\Wmapii{-1,1}=\Vmapii{0}$ on the compact segment
\[
    \Aman_1:=\Vsetii{0}\setminus(\Vsetii{-1}\cup\Vsetii{1}) = \Vmapii{0}^{-1}([\bii{-1}, \aii{1}]).
\]

Thus, by induction for each $n\geq1$ we can construct a sequence of chain-like $\Ck$-atlases
\[
    \VAtlas_{-n,n} := \bigl( \VAtlas  \setminus\{ \Vchrii{\vi} \}_{\vi=-n}^{n} \bigr) \cup \{\Wchrii{-n,n}\}
\]
where $\Wchrii{-n,n}=\bigl(\mathop{\cup}\limits_{\vi=-n}^{n}\Vsetii{\vi}, \Wmapii{-n,n}\bigr)$ is a chart such that $\Wmapii{-n,n}\bigl(\mathop{\cup}\limits_{\vi=-n}^{n}\Vsetii{\vi} \bigr) = (\aii{-n};\bii{n})$ and $\Wmapii{-n,n}=\Wmapii{-n+1,n-1}$ on the compact segment
\[
    \Aman_{n}:=
    \bigl(\mathop{\cup}\limits_{\vi=-n}^{n}\Vsetii{\vi}\bigr)
    \setminus
    \bigl(
        \Vsetii{-n} \cup \Vsetii{n}
    \bigr).
\]

Since $\Aman_{n} \subset \Int{\Aman_{n+1}}$ and $\bR = \mathop{\cup}\limits_{n=0}^{+\infty}\Aman_{n}$, for each $\px\in\bR$ there exists $n>0$ such that
\[
    \Wmapii{-i,i}(\px)=\Wmapii{-j,j}(\px) \ \text{for all} \ i,j\geq n.
\]
Then due to~\eqref{equ:lim_ai_bi}, we get a well-defined homeomorphism $\Wmap\colon\bR\to\bR$, $\Wmap(\px) = \lim\limits_{n\to\infty}\Wmapii{-n,n}(\px)$, which is $\Ck$-compatible with all the above atlases and, in particular, with $\VAtlas$.
\end{proof}

\begin{corollary}\label{cor:Ck_struct_on_R}
Every $\Ck$-structure $\Vstruct$ on $\bR$ is $\Ck$-diffeomorphic with the canonical one given by the atlas $\CanonAtlas{\bR}=\{(\bR,\id_{\bR})\}$ with a single chart.
\end{corollary}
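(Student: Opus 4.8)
The idea is to reduce, via the machinery of Section~\ref{sect:diff_struct_on_bR}, an arbitrary $\Ck$-structure on $\bR$ to one represented by a single chart whose underlying map is a homeomorphism of $\bR$ \emph{onto} $\bR$, and then to compare that chart with the identity chart using Lemma~\ref{lm:atlases_with_one_chart}.

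First I would fix an arbitrary $\Ck$-structure $\Vstruct$ on $\bR$ and choose some $\Ck$-atlas $\UAtlas$ representing it. By Theorem~\ref{th:atlases_on_R}\ref{enum:th:atlases_on_R:chain_like_exist} there is a chain-like $\Ck$-atlas $\VAtlas$ on $\bR$ which is $\Ck$-compatible with $\UAtlas$; since ``$\Ck$-compatibility'' of atlases is an equivalence relation, $\VAtlas$ still represents $\Vstruct$. Applying Theorem~\ref{th:atlases_on_R}\ref{enum:th:atlases_on_R:one_chart} to $\VAtlas$ then yields a homeomorphism $\Wmap\colon\bR\to\bR$ such that the chart $(\bR,\Wmap)$ is $\Ck$-compatible with $\VAtlas$, hence $\Ck$-compatible with $\Vstruct$. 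Since its domain is all of $\bR$, the single-chart collection $\{(\bR,\Wmap)\}$ is itself a $\Ck$-atlas, and being $\Ck$-compatible with $\VAtlas$ it represents $\Vstruct$.

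It then remains to compare $\{(\bR,\Wmap)\}$ with the canonical atlas $\CanonAtlas{\bR}=\{(\bR,\id_{\bR})\}$ representing $\CanonStr{\bR}$. Because $\Wmap$ is a homeomorphism of $\bR$ onto $\bR$, we have $\Wmap(\bR)=\bR=\id_{\bR}(\bR)$, so both atlases consist of a single chart on the open subset $\bR\subset\bR^{1}$ with the same image. Lemma~\ref{lm:atlases_with_one_chart} (with $\Xman=\bR$, $\Umap=\Wmap$, $\Vmap=\id_{\bR}$) then gives at once that $\id_{\bR}^{-1}\circ\Wmap=\Wmap$ is a $\Ck$-diffeomorphism from $(\bR,\Vstruct)$ to $(\bR,\CanonStr{\bR})$, which is the assertion. (Alternatively, one may transport $\Vstruct$ by $\Wmap$ and invoke Lemma~\ref{lm:act_homeo_on_atlases} together with Lemma~\ref{lm:char_smooth_structures}.)

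\textbf{Main obstacle.} There is essentially nothing deep left at this point: all the work has been shifted into Theorem~\ref{th:atlases_on_R}, where one must first refine an arbitrary atlas to a chain-like one and then ``telescope'' the countably many charts of a chain-like atlas into a single global chart. The only subtlety worth flagging in the corollary itself is that Theorem~\ref{th:atlases_on_R}\ref{enum:th:atlases_on_R:one_chart} delivers a chart whose map is \emph{surjective} onto $\bR$ rather than merely an open embedding into $\bR$; this surjectivity is exactly what lets us match images and apply Lemma~\ref{lm:atlases_with_one_chart} with the identity as local presentation.
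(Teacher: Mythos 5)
Your proposal is correct and follows essentially the same route as the paper's own proof: invoke Theorem~\ref{th:atlases_on_R} to replace an arbitrary atlas for $\Vstruct$ by a single-chart atlas $\{(\bR,\Wmap)\}$ with $\Wmap(\bR)=\bR$, then apply Lemma~\ref{lm:atlases_with_one_chart} with the identity chart to conclude that $\Wmap$ itself is the desired $\Ck$-diffeomorphism. The only cosmetic difference is that you spell out the two stages of Theorem~\ref{th:atlases_on_R} (passing through a chain-like atlas) which the paper cites in one stroke.
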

\begin{proof}
By Theorem~\ref{th:atlases_on_R}, $\Vstruct$ is represented by an atlas $\VAtlas=\{(\bR,\Vmap)\}$ consisting of a single chart given by some homeomorphism $\Vmap\colon\bR\to\bR$.
On the other hand, the canonical $\Ck$-structure $\CanonStr{\bR}$ is also given by the atlas with a single chart whose coordinate homeomorphism $\id\colon\bR\to\bR$ has the same image $\bR$ as $\Vmap$.
Then by Lemma~\ref{lm:atlases_with_one_chart}, the map $\Vmap = \Vmap\circ\id_{\bR}^{-1}\colon\bR\to\bR$ is a $\Ck$-diffeomorphism between $(\bR,\Vstruct)$ and $(\bR, \CanonStr{\bR})$.
\end{proof}

\begin{subcorollary}\label{cor:adding_one_chart}
Let $\Mman$ be a one-dimensional locally Euclidean topological space, $\UAtlas$ a $\Ck$-atlas on $\Mman$, and $\Uman\subset\Mman$ be an open subset homeomorphic with $\bR$.
Then there exists a homeomorphism $\Umap\colon\Uman\to\bR$ being $\Ck$-compatible with $\UAtlas$.
In other words, $\UAtlas \cup \{ (\Uman, \Uman)\}$ is a $\Ck$-atlas on $\Mman$ being $\Ck$-compatible with $\UAtlas$.
\end{subcorollary}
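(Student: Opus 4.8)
The plan is to reduce the statement to the uniqueness result on $\bR$ established above (Theorem~\ref{th:atlases_on_R} and Corollary~\ref{cor:Ck_struct_on_R}). Fix a homeomorphism $\phi\colon\Uman\to\bR$, which exists by hypothesis. First I would restrict $\UAtlas$ to the open subset $\Uman$: the collection $\restr{\UAtlas}{\Uman}$ of charts $(\Uman\cap\Useti,\restr{\Umapi}{\Uman\cap\Useti})$ (over those $\ui$ with $\Uman\cap\Useti\ne\varnothing$) is again a $\Ck$-atlas, now on $\Uman$, as recalled in Section~\ref{sect:Ck_structs_on_loc_eu_spaces}. Since $\phi$ is a homeomorphism between the locally Euclidean spaces $\Uman$ and $\bR$, part~(3) of Lemma~\ref{lm:act_homeo_on_atlases} shows that $\phi^{*}\bigl(\restr{\UAtlas}{\Uman}\bigr)$ is a $\Ck$-atlas on $\bR$.

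Next I would apply Theorem~\ref{th:atlases_on_R} to this atlas on $\bR$: its part~\ref{enum:th:atlases_on_R:chain_like_exist} produces a chain-like $\Ck$-atlas that is $\Ck$-compatible with $\phi^{*}\bigl(\restr{\UAtlas}{\Uman}\bigr)$, and then part~\ref{enum:th:atlases_on_R:one_chart} produces a homeomorphism $\Wmap\colon\bR\to\bR$ such that the single chart $(\bR,\Wmap)$ is $\Ck$-compatible with $\phi^{*}\bigl(\restr{\UAtlas}{\Uman}\bigr)$. I would then transport $(\bR,\Wmap)$ back along $\phi$: by part~(2) of Lemma~\ref{lm:act_homeo_on_atlases} applied to $\phi^{-1}\colon\bR\to\Uman$, for each index $\ui$ the transition map between $(\bR,\Wmap)$ and the $\ui$-th chart of $\phi^{*}\bigl(\restr{\UAtlas}{\Uman}\bigr)$ equals the transition map between $(\Uman,\Wmap\circ\phi)$ and $(\Uman\cap\Useti,\restr{\Umapi}{\Uman\cap\Useti})$. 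The former is a $\Ck$-diffeomorphism, hence so is the latter, and therefore the chart $(\Uman,\Wmap\circ\phi)$ is $\Ck$-compatible with $\restr{\UAtlas}{\Uman}$.

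It remains to upgrade $\Ck$-compatibility with $\restr{\UAtlas}{\Uman}$ to $\Ck$-compatibility with $\UAtlas$ itself; this is the only step that is not an immediate citation, and is where I would be most careful, although it is routine. Given any chart $(\Useti,\Umapi)\in\UAtlas$, the overlap $\Uman\cap\Useti$ is contained in $\Uman$, so the transition map between $(\Uman,\Wmap\circ\phi)$ and $(\Useti,\Umapi)$ coincides with the transition map between $(\Uman,\Wmap\circ\phi)$ and $(\Uman\cap\Useti,\restr{\Umapi}{\Uman\cap\Useti})$; since the latter chart belongs to $\restr{\UAtlas}{\Uman}$, this transition map is a $\Ck$-diffeomorphism. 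Thus $\Umap:=\Wmap\circ\phi\colon\Uman\to\bR$ is a homeomorphism onto whose chart $(\Uman,\Umap)$ is $\Ck$-compatible with every chart of $\UAtlas$, so $\UAtlas\cup\{(\Uman,\Umap)\}$ is a $\Ck$-atlas on $\Mman$ which is $\Ck$-compatible with $\UAtlas$, as required.
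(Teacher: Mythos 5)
Your proof is correct and is essentially the argument the paper intends: the paper states this corollary without a separate proof as a consequence of Theorem~\ref{th:atlases_on_R}, and your route --- restrict $\UAtlas$ to $\Uman$, transport the restricted atlas to $\bR$ along a homeomorphism $\phi$, apply Theorem~\ref{th:atlases_on_R} to get a single global chart $(\bR,\Wmap)$, pull it back to $(\Uman,\Wmap\circ\phi)$, and observe that compatibility with $\restr{\UAtlas}{\Uman}$ already gives compatibility with $\UAtlas$ since the overlaps lie in $\Uman$ --- is exactly that derivation. The only point worth stating explicitly is that part~(2) of Theorem~\ref{th:atlases_on_R} gives compatibility of $(\bR,\Wmap)$ with the intermediate chain-like atlas, and compatibility with $\phi^{*}\bigl(\restr{\UAtlas}{\Uman}\bigr)$ then follows because both are genuine atlases on $\bR$ and the paper's lemma makes $\Ck$-compatibility an equivalence relation on atlases.
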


We finish this section with the following well-known statement which could be proved in using the approach of similar to Theorem~\ref{th:atlases_on_R}.
\begin{theorem}\label{th:uniqueness_of_ck-struct}
Let $\Mman$ be a connected Hausdorff one-dimensional locally Euclidean topological space.
Then for any two $\Ck$-structures $\Ustruct$ and $\Vstruct$ on $\Mman$ there exists a $\Ck$-diffeomor\-phism $\dif\in\DrAB{\kk}{\Ustruct}{\Vstruct}{\Mman}{\Mman}$.
In other words, the action of the group of homeomorphisms $\Homeo(\Mman)$ on the set of all $\Ck$-structures on $\Mman$ is transitive.
\end{theorem}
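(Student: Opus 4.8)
The plan is to reduce, up to homeomorphism, to the possible shapes of a connected Hausdorff $1$-manifold without boundary and to handle each with the chart-joining technique of this section. For a second countable (equivalently metrizable, or paracompact) such $\Mman$ there are only two shapes, the real line and the circle $\Circle$, and in each case I will construct a homeomorphism $\dif\colon\Mman\to\Mman$ that is a $\Ck$-diffeomorphism from $\Ustruct$ to $\Vstruct$; this is precisely the transitivity of the $\Homeo(\Mman)$-action on $\Ck$-structures. The non-metrizable case (the long line and its relatives) is indicated at the end.

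\textbf{The line case.} If $\Mman\approx\bR$, then Theorem~\ref{th:atlases_on_R}, transported along a homeomorphism $\Mman\approx\bR$ and applied to atlases representing $\Ustruct$ and $\Vstruct$, shows that each of $\Ustruct,\Vstruct$ is represented by a one-chart atlas $\{(\Mman,\Umap)\}$, resp.\ $\{(\Mman,\Vmap)\}$, with $\Umap,\Vmap\colon\Mman\to\bR$ homeomorphisms onto $\bR$. Then $\dif:=\Vmap^{-1}\circ\Umap\colon\Mman\to\Mman$ has presentation $\id_{\bR}$ in the charts $\Umap$ (source) and $\Vmap$ (target), so it is a $\Ck$-diffeomorphism from $\Ustruct$ to $\Vstruct$ by the transported form of Lemma~\ref{lm:atlases_with_one_chart}; this is Corollary~\ref{cor:Ck_struct_on_R} carried from $\bR$ to $\Mman$.

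\textbf{The circle case.} If $\Mman\approx\Circle$, fix $q\in\Mman$. Since $\Mman\setminus\{q\}\approx\bR$, Corollary~\ref{cor:adding_one_chart} supplies homeomorphisms onto $\bR$, namely $\Umap\colon\Mman\setminus\{q\}\to\bR$ that is $\Ck$-compatible with $\Ustruct$ and $\Vmap\colon\Mman\setminus\{q\}\to\bR$ that is $\Ck$-compatible with $\Vstruct$. Pick also a $\Ck$-chart $\alpha$ around $q$ compatible with $\Ustruct$ and a $\Ck$-chart $\beta$ around $q$ compatible with $\Vstruct$, with $\alpha(q)=\beta(q)=0$ and a common domain $\Wset\ni q$. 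I would then reparametrise $\Umap$ by a $\Ck$-diffeomorphism of $\bR$ that changes it only on a compact subset (so $\Umap$ stays $\Ck$-compatible with $\Ustruct$) in such a way that $\alpha\circ\Umap^{-1}$, which is defined off a compact subset of $\bR$, becomes the inversion $t\mapsto1/t$; such a reparametrisation exists by the bump-function argument of Lemma~\ref{lm:glue_id_and_diff}. After doing the same for $\Vmap$, and shrinking $\Wset$, both structures are represented by two-chart atlases $\{(\Mman\setminus\{q\},\Umap),(\Wset,\alpha)\}$ and $\{(\Mman\setminus\{q\},\Vmap),(\Wset,\beta)\}$ with the \emph{same} transition maps. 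Hence $\dif\colon\Mman\to\Mman$, defined by $\restr{\dif}{\Mman\setminus\{q\}}=\Vmap^{-1}\circ\Umap$ and $\dif(q)=q$, has presentation $\id_{\bR}$ both in the charts $(\Umap,\Vmap)$ on $\Mman\setminus\{q\}$ and in the charts $(\alpha,\beta)$ near $q$, so it is a $\Ck$-diffeomorphism from $\Ustruct$ to $\Vstruct$.

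\textbf{Main obstacle, and the non-metrizable case.} Everything except the reparametrisation step of the circle case is formal; that step — straightening the transition map at the puncture to the standard inversion while altering the chart only on a compact set — is the crux, and it works for $\Ck$ with $1\le\kk\le\infty$ precisely because of the bump-function interpolation of Lemma~\ref{lm:glue_id_and_diff}. When $\Mman$ is not second countable (e.g.\ the long line) the chain-like atlas of Theorem~\ref{th:atlases_on_R}\ref{enum:th:atlases_on_R:chain_like_exist}, which relies on paracompactness, is not available; instead one writes $\Mman=\bigcup_{\alpha<\lambda}\Mman_{\alpha}$ as a well-ordered exhaustion by open arcs and builds $\dif$ by transfinite recursion, extending it over $\Mman_{\alpha+1}$ at a successor stage by a $\Ck$-join (Theorem~\ref{th:joinable_charts}\ref{th:joinable_charts:join_exist}) while keeping it unchanged on a compact piece of $\Mman_{\alpha}$, in the spirit of the proof of Theorem~\ref{th:atlases_on_R}\ref{enum:th:atlases_on_R:one_chart}; the limit stages of countable cofinality are the one delicate point, and they are also where there is no $\Cr{\omega}$-analogue, matching the remarks on the long line in the introduction.
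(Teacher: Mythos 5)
The paper itself gives no proof of Theorem~\ref{th:uniqueness_of_ck-struct}: it is stated as a well-known fact ``provable by the approach of Theorem~\ref{th:atlases_on_R}'', so you are supplying detail where the paper only points. Your line case is exactly that approach (chain-like atlas, reduction to a single chart, then Lemma~\ref{lm:atlases_with_one_chart}, i.e.\ Corollary~\ref{cor:Ck_struct_on_R} transported to $\Mman\approx\bR$) and is correct. In the circle case your outline (puncture at $q$, a chart on $\Mman\setminus\{q\}\approx\bR$ from Corollary~\ref{cor:adding_one_chart} for each structure, normalise the transition map at the puncture to a standard inversion, then glue) is workable, but the crucial step is stated backwards: a reparametrisation of $\Umap$ that changes it \emph{only on a compact subset} of $\bR$ leaves the germ of $\alpha\circ\Umap^{-1}$ at the two ends of $\bR$ unchanged, so it can never turn that transition map into $t\mapsto 1/t$ near infinity unless it already was. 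What is needed is the opposite: keep $\Umap$ fixed on a large compact set and modify it \emph{near the ends}, taking the reparametrising diffeomorphism equal to $(1/\alpha)\circ\Umap^{-1}$ on end-neighbourhoods and interpolating across the middle by a two-ended version of Lemma~\ref{lm:glue_id_and_diff}; note that $\Ck$-compatibility of $p\circ\Umap$ with $\Ustruct$ holds for \emph{any} $\Ck$-diffeomorphism $p$ of $\bR$, so the parenthetical reason you give for compact support is not the point. One must also adjust signs of $\alpha,\beta,\Umap,\Vmap$ so that both straightened transition maps are the \emph{same} inversion before gluing. With these repairs your construction of $\dif\in\DrAB{\kk}{\Ustruct}{\Vstruct}{\Mman}{\Mman}$ goes through; an alternative closer to the paper's hint is to join charts along a finite cyclic chain covering $\Circle$ until only two remain.

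The more substantial gap is the non-metrizable case, which the hypotheses of the theorem as written do allow. Your transfinite-recursion sketch is not a proof: the successor steps are plausible, but at limit stages (in particular those of countable cofinality) nothing is said about why the recursively built charts or diffeomorphisms converge, and you yourself flag this as ``the one delicate point''. This cannot be waved through, because it is exactly where the argument could fail: as the introduction of this paper notes, it is not clear whether the long line admits non-diffeomorphic $\Cr{\infty}$-structures, so uniqueness in the non-second-countable setting is not something one may assume. The honest resolution is to read the theorem (as surely intended, given the reference to Theorem~\ref{th:atlases_on_R}) with second countability, where connectedness and Hausdorffness give $\Mman\approx\bR$ or $\Mman\approx\Circle$ and your repaired argument is complete; for general Hausdorff locally Euclidean $\Mman$ your proposal does not establish the statement.
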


\section{Differentiable structures on the line with double origin}\label{sect:Ck_ctruct_on_L}
It this section we will give a classification of $\Ck$-structures on the line with two origins $\DLine = \bR\sqcup\{\ptb\}$ for all $\kk=1,2,\ldots,\infty$.
It will be convenient to describe at first the structure of the groups $\HomeoL$ of all homeomorphisms of $\DLine$, and then compare it with the subgroups of diffeomorphisms of distinct differentiable structures.
Denote $\Na := \bR = \DLine\setminus\ptb$ and $\Nb := \DLine\setminus\pta$.

\subsection{Homeomorphisms of $\bR$ and $\DLine$}
\newcommand\calG{\mathcal{G}}
Consider the diffeomorphism $\qdif\colon\bR\to\bR$, $\qdif(\px)=-\px$, reversing orientation and having order $2$.
Evidently, $\qdif\in\JDiffRz[\infty,-]$.

Let $\calG$ be one of the groups $\HomeoRzOrPres$, $\HRzOrPres$, $\DiffRzOrPres$ or $\JDiffRzOrPres$ defined in Section~\ref{sect:notation}, and $\calG^{+} = \calG\cap\HomeoRzOrPres$.

Then it is easy to see that $\qdif\,\calG^{+}\qdif^{-1}=\calG^{+}$, so the conjugation by $\qdif$ is an automorphism of $\calG^{+}$ of order $2$.
Hence one can define the semidirect product $\calG^{+}\rtimes\bZ_{2}$ corresponding to that automorphism.
Thus, by definition, $\calG^{+}\rtimes\bZ_{2}$ is a cartesian product $\calG^{+}\times\bZ_{2}$ with the following multiplication $(\dif,\delta)(\gdif,\eps) = (\dif\circ\qdif^{\delta}\circ\gdif\circ\qdif^{-\delta}, \delta\eps)$.
In particular, the inclusion $\calG^{+}\subset \HomeoRzOrPres$ allows to consider $\calG^{+}\rtimes\bZ_{2}$ as a subgroup of $\HomeoRzOrPres\rtimes\bZ_{2}$.

The following two lemmas are straightforward, see Section~\ref{sect:semidirect_products}, and we leave them for the reader.
They describe certain algebraic properties of the groups of homeomorphisms of $\bR$ and $\DLine$ and also establish relationships between them.
\begin{sublemma}
The map $\eta\colon\HomeoRzOrPres\rtimes\bZ_{2} \to \HomeoRzOrPres$, $\eta(\dif,\delta) = \dif\circ\qdif^{\delta}$, is an isomorphism making commutative the following diagram:
\[
  \xymatrix@C=5em{
    \HomeoRzOrPres \ar@{^(->}[r]^-{\gdif\,\mapsto\,(\gdif,0)} \ar@{=}[d] &
    \HomeoRzOrPres \rtimes_{\qdif} \bZ_2   \ar@{->>}[r]^-{(\gdif,a)\,\mapsto\,a}
        \ar[d]_-{\eta}^-{\cong} &
    \bZ_2 \ar@{=}[d]
    \\
    \HomeoRzOrPres  \ar@{^(->}[r]  &
    \HomeoRz \ar@{->>}[r]^{s} &
    \langle\qdif\rangle\cong\HomeoRz/\HomeoRzOrPres
  }
\]
and isomorphically mapping $\calG^{+}\times\bZ_{2}$ onto $\calG$.
\end{sublemma}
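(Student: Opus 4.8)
The quickest route is to recognise the lemma as the instance of the general semidirect‑product isomorphism of Section~\ref{sect:semidirect_products} applied to the following data: take $H=\HomeoRz$, let $A=\langle\qdif\rangle=\{\id_{\bR},\qdif\}\cong\bZ_{2}$, and let $s\colon H\to A$ be the ``orientation'' map, sending each orientation‑preserving homeomorphism to $\id_{\bR}$ and each orientation‑reversing one to $\qdif$. The plan is then: (i) check that $s$ is a homomorphism and a retraction onto $A$ with kernel exactly $\HomeoRzOrPres$, so that Section~\ref{sect:semidirect_products} produces for free the isomorphism $\gamma\colon\HomeoRzOrPres\rtimes_{\qdif}\bZ_{2}\to\HomeoRz$, $\gamma(\dif,\delta)=\dif\circ\qdif^{\delta}=\eta(\dif,\delta)$, together with the commutativity of the displayed diagram; and (ii) repeat the observation with $H$ replaced by each of the groups $\calG$, noting that $s$ restricts to a retraction of $\calG$ onto $A$ with kernel $\calG^{+}=\calG\cap\HomeoRzOrPres$.

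Step (i) is the elementary part. That $s$ is a homomorphism is just multiplicativity of orientation under composition (the composite of two orientation‑preserving maps preserves orientation, a preserving one composed with a reversing one reverses it, and two reversing maps compose to a preserving one); that it is a retraction onto $A$ is the remark that $\qdif$ reverses orientation, so $s(\qdif)=\qdif$ and $s(\id_{\bR})=\id_{\bR}$; and $\ker s=\HomeoRzOrPres$ is immediate from the definition of $s$ and the fact that a homeomorphism of $\bR$ is strictly monotone. One could equally verify $\eta$ by hand: it is a homomorphism by a one‑line expansion of the semidirect‑product multiplication in which $\qdif^{-\delta}\circ\qdif^{\delta\eps}$ collapses to $\qdif^{\eps}$ because $\qdif^{2}=\id_{\bR}$; it is injective because $\dif\circ\qdif^{\delta}=\id_{\bR}$ with $\dif$ orientation‑preserving forces $\delta$ trivial and $\dif=\id_{\bR}$; and it is surjective because an orientation‑preserving $\hel$ equals $\eta(\hel,0)$ while an orientation‑reversing $\hel$ satisfies $\hel\circ\qdif\in\HomeoRzOrPres$ and $\hel=\eta(\hel\circ\qdif,1)$. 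Commutativity of the left square is $\eta(\gdif,0)=\gdif$, and of the right square is $s(\dif\circ\qdif^{\delta})=s(\qdif^{\delta})$, which corresponds to $\delta$ under $\HomeoRz/\HomeoRzOrPres\cong\langle\qdif\rangle\cong\bZ_{2}$.

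For step (ii) the only thing to verify is that $\qdif$ lies in each of the groups $\HomeoRz$, $\HRz$, $\DiffRz$, $\JDiffRz$ — which holds because $\qdif$ is linear, so in fact $\qdif\in\JDiffRz[\infty,-]$ — and hence $A=\langle\qdif\rangle\subset\calG$; then $s|_{\calG}\colon\calG\to A$ is again a homomorphism‑retraction, its kernel is $\calG\cap\ker s=\calG^{+}$, and Section~\ref{sect:semidirect_products} yields an isomorphism $\calG^{+}\rtimes_{\qdif}\bZ_{2}\to\calG$ which is visibly the restriction $\eta|_{\calG^{+}\times\bZ_{2}}$. Concretely: $\eta$ maps $\calG^{+}\times\bZ_{2}$ into $\calG$ since $\dif\circ\qdif^{\delta}$ is a composite of elements of $\calG$, and it is onto because any orientation‑reversing $\hel\in\calG$ gives $\hel\circ\qdif\in\calG\cap\HomeoRzOrPres=\calG^{+}$ with $\hel=\eta(\hel\circ\qdif,1)$; injectivity is inherited.

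I do not anticipate any real obstacle: every step is a routine check, which is precisely why the paper leaves it to the reader. The one spot that rewards a little care is making sure that $\qdif$, and post‑composition by it, never leaves the smaller groups, in particular $\JDiffRz$, where one additionally needs $\qdif^{(i)}(0)=0$ for all $i\geq2$ — true because $\qdif$ is affine. The genuine value of carrying the proof out is bookkeeping: fixing the convention that $0$ is the identity of $\bZ_{2}$ here, recording the splitting $s$, and noting that these choices are the ones reused in the proofs that follow.
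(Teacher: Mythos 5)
Your proof is correct and takes exactly the route the paper intends: the paper gives no written proof, merely declaring the lemma straightforward and pointing to the semidirect-product construction of Section~\ref{sect:semidirect_products}, and your argument is precisely that construction instantiated with the orientation retraction $s\colon\HomeoRz\to\langle\qdif\rangle$ whose kernel is $\HomeoRzOrPres$, supplemented by the (correct) direct verifications. You also rightly read the codomain of $\eta$ as $\HomeoRz$ (as the diagram shows) and take $\calG$ to range over the full groups $\HomeoRz$, $\HRz$, $\DiffRz$, $\JDiffRz$ with $\calG^{+}$ their orientation-preserving parts, which is the only reading under which the last clause is non-trivial.
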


\begin{sublemma}\label{lm:HomeoL}
\begin{enumerate}[leftmargin=*, label={\rm(\arabic*)}, itemsep=2ex]
\item\label{enum:lm:HomeoL:xi2_id}
Let $\invol\colon\DLine\to\DLine$ be the (unique) map exchanging $\pta$ and $\ptb$ and fixed on the complement $\DLine\setminus\BranchPtDLine$.
Then
\begin{enumerate}[label={\rm(\alph*)}, ref={\rm(2\alph*)}, itemsep=1ex]
\item $\invol$ is a homeomorphism of $\DLine$ and $\invol^2=\id_{\DLine}$, so it generates the subgroup $\grpInvol\cong\bZ_2$;

\item $\invol$ commutes with each $\dif\in\HomeoL$ and $\grpInvol$ coincides with the center of $\HomeoL$.
\end{enumerate}

\item
$\HomeoLxOFix$ is the kernel of the ``restriction to $\BranchPtDLine$'' homomorphism
\[
    \perm\colon\HomeoL\to\mathrm{Perm}(\BranchPtDLine)=\bZ_2,
    \qquad
    \perm(\dif)=\restr{\dif}{\BranchPtDLine},
\]
to the group of permutations of $\BranchPtDLine$.
Moreover, the ``restriction to $\Na$'' map
\[
    \rmap\colon\HomeoLxOFix \to \HomeoRz,
    \qquad
    \rmap(\dif)=\restr{\dif}{\Na},
\]
is an isomorphism of groups.
The inverse map $\rmap^{-1}\colon\HomeoRz\to\HomeoLxOFix$ associates to each $\dif\in\HomeoRz$ its unique extension to a homeomorphism of all $\DLine$ fixing $\ptb$.

\item\label{enum:lm:HomeoL:H_xi}
The normal subgroups $\HomeoLxOFix$ and $\grpInvol$ generate $\HomeoL$, and $\HomeoLxOFix \cap \grpInvol = \{\id_{\DLine}\}$, whence $\HomeoL$ splits into the direct product $\HomeoLxOFix \times \grpInvol$.
More explicitly, the map
\[
    \alpha\colon\HomeoLxOFix \times \grpInvol \equiv \HomeoRz \times \bZ_2 \to \HomeoL,
    \qquad
    \alpha(\gdif, \eps) = \gdif\circ\xi^{\eps},
\]
is an isomorphism of groups making commutative the following diagram:
\[
  \xymatrix@C=5em@R=2em{
    \HomeoLxOFix \ar@{^(->}[r] \ar[d]_-{\rmap\colon\dif\,\mapsto\,\restr{\dif}{\Na}}^-{\cong} &
    \HomeoL     \ar@{->>}[r]^-{\perm\colon\dif\,\mapsto\,\restr{\dif}{\BranchPtDLine}}  &
    \mathrm{Perm}(\BranchPtDLine) \ar@{=}[d]
    \\
    \HomeoRz \ar@{^(->}[r]^-{\gdif\,\mapsto\,(\gdif,0)}  &
    \HomeoRz \times \bZ_2 \ar@{->>}[r]^-{(\gdif,\eps)\,\mapsto\,\eps }  \ar[u]^-{\alpha}_-{\cong} &
    \bZ_2
  }
\]

\item
Let $\qdif\in\HomeoRz$ be any homeomorphism such that $\qdif^2=\id_{\bR}$, and $\hat{\qdif} = \rmap^{-1}(\qdif)\in\HomeoLxOFix$ be it unique extension to all of $\DLine$ fixing $\ptb$.
Then we have an isomorphism
\[
    \bigl( \HomeoRzOrPres \rtimes_{\qdif} \bZ_2 \bigr) \times \bZ_2  \,\cong\, \HomeoL,
    \qquad
    (\gdif, a, \eps)  \mapsto \gdif\circ \hat{\qdif}^{a} \circ \invol^{\eps}.
\]
\end{enumerate}
\end{sublemma}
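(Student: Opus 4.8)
The plan is to treat the four items in turn, the whole statement resting on Lemma~\ref{lm:branch_pt_invariant}: since $\BranchPtDLine$ is precisely the set of branch points of $\DLine$, every homeomorphism of $\DLine$ permutes $\pta$ and $\ptb$, so $\perm(\dif):=\restr{\dif}{\BranchPtDLine}$ is a well-defined homomorphism $\HomeoL\to\mathrm{Perm}(\BranchPtDLine)\cong\bZ_{2}$, and $\HomeoLxOFix=\ker\perm$ by definition. For the first item I would first check against the explicit topology $\topX$ that $\invol$ is a homeomorphism: a set $\Wset\in\tau$ with $\pta\notin\Wset$ is fixed by $\invol$, a set $\Wset\in\tau$ with $\pta\in\Wset$ is sent to $(\Wset\setminus\pta)\cup\{\ptb\}$, and a set of the latter form is sent back to $\Wset$, so $\invol$ is an open continuous bijection, and $\invol^{2}=\id_{\DLine}$ is clear. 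Centrality is then cheap: for $\dif\in\HomeoL$ the conjugate $\dif\invol\dif^{-1}$ is a homeomorphism which (by Lemma~\ref{lm:branch_pt_invariant}, whether $\dif$ fixes or swaps $\pta,\ptb$) exchanges $\pta$ and $\ptb$ and is the identity on $\DLine\setminus\BranchPtDLine$, hence equals the unique such map $\invol$. Conversely, a central $\dif$ can be multiplied by $\invol$ if necessary to land in $\HomeoLxOFix$; it is then central in $\HomeoLxOFix\cong\HomeoRz$ (second item), and since $\HomeoRz$ has trivial centre, $\dif\in\grpInvol$.

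For the second item the only thing to verify beyond $\HomeoLxOFix=\ker\perm$ is that $\rmap$ is an isomorphism. A homeomorphism fixing $\ptb$ also fixes $\pta$ and carries $\Na=\DLine\setminus\ptb$ onto itself, so $\restr{\dif}{\Na}\in\HomeoRz$; the assignment $\dif\mapsto\restr{\dif}{\Na}$ is visibly a homomorphism, and it is injective because a homeomorphism fixing $\ptb$ is determined by its restriction to the complement $\Na$ of that single point. For surjectivity, given $\gdif\in\HomeoRz$ extend it by $\ptb\mapsto\ptb$; using $\gdif(\pta)=\pta$ one checks that a basic neighbourhood $(\Wset\setminus\pta)\cup\{\ptb\}$ of $\ptb$ is sent bijectively onto $(\gdif(\Wset)\setminus\pta)\cup\{\ptb\}$, which is again basic open, and symmetrically for the inverse; hence the extension lies in $\HomeoLxOFix$ and is the claimed inverse of $\rmap$.

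The third and fourth items are then formal consequences. $\HomeoLxOFix$ is normal as $\ker\perm$, $\grpInvol$ is normal as the centre, $\HomeoLxOFix\cap\grpInvol=\{\id_{\DLine}\}$ because $\invol\notin\HomeoLxOFix$, and the two subgroups generate $\HomeoL$ since $\perm(\dif\invol)$ is trivial whenever $\dif$ exchanges $\pta$ and $\ptb$, so $\dif=(\dif\invol)\invol$; two normal subgroups with trivial intersection that together generate the group form an internal direct product, which under the identification $\rmap$ is exactly the isomorphism $\alpha$, and commutativity of the stated diagram is immediate from $\perm\circ\rmap^{-1}=1$ and $\perm(\invol)$ being the transposition. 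For the fourth item, observe that a nontrivial involution $\qdif\in\HomeoRz$ is necessarily orientation-reversing (an increasing involution of $\bR$ is the identity), so the argument of the sublemma immediately preceding Lemma~\ref{lm:HomeoL} furnishes an isomorphism $\HomeoRzOrPres\rtimes_{\qdif}\bZ_{2}\cong\HomeoRz$, $(\gdif,a)\mapsto\gdif\circ\qdif^{a}$; composing it with the isomorphism $\HomeoRz\times\bZ_{2}\cong\HomeoL$ of the third item and using that $\rmap^{-1}$ is a homomorphism with $\rmap^{-1}(\qdif)=\hat{\qdif}$ yields precisely $(\gdif,a,\eps)\mapsto\gdif\circ\hat{\qdif}^{a}\circ\invol^{\eps}$.

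I expect the one genuinely non-formal point to be the triviality of the centre of $\HomeoRz$, hence the hard inclusion in the first item: for any $a\ne0$ with $\gdif(a)\ne a$ one must exhibit a homeomorphism of $\bR$ fixing $0$ and $a$ but moving $\gdif(a)$, contradicting centrality of $\gdif$. Everything else is bookkeeping with the explicit topology $\topX$ and with passing between internal and external (semi)direct products.
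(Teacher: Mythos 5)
Your proof is correct. The paper states this lemma without proof (``straightforward\ldots{} we leave them for the reader''), and your argument is exactly the intended one; you correctly isolate the single non-formal ingredient, the triviality of the centre of $\HomeoRz$, and your sketch of it (fix $0$ and $a$, move $\gdif(a)$, which is legitimate since $\gdif(a)\neq 0,a$) is sound. One remark worth recording: in item (4) you tacitly assume $\qdif\neq\id_{\bR}$, and rightly so — for $\qdif=\id_{\bR}$ the product $\HomeoRzOrPres\rtimes_{\qdif}\bZ_2$ degenerates and the stated map misses all orientation-reversing homeomorphisms, so the paper's ``any homeomorphism with $\qdif^2=\id_{\bR}$'' must be read as ``any orientation-reversing involution'', which your observation that every nontrivial involution of $(\bR,0)$ reverses orientation makes precise.
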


\subsection{Group $\DRzp$}
From now on fix some $\kk\in\bN\cup\{\infty\}$, and let $\DRzp$ be the group of \term{monotone} $\Ck$-diffeomorphisms of $\Rzp$ (i.e.\ either increasing or decreasing).

\begin{subexample}\rm
The diffeomorphism $\dif\colon\Rzp\to\Rzp$, $\dif(\px)=1/\px$, is \term{not monotone} and therefore does not belong to $\DRzp$.
\end{subexample}

Notice that for each $\dif\in\HRz$ its restriction $\restr{\dif}{\Rzp}\colon\Rzp\to\Rzp$ is a monotone $\Ck$-diffeomorphism.
\begin{sublemma}
The map $\gamma\colon\HRz\to\DRzp$, $\gamma(\dif) = \restr{\dif}{\Rzp}$, is an isomorphism of groups.
\end{sublemma}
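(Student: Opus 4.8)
The plan is to verify, one after the other, that $\gamma$ is well defined, that it is a homomorphism, that it is injective, and that it is surjective. The first three are routine, so I dispatch them at once. Every homeomorphism $\dif$ of $\bR$ is strictly monotone, since a continuous injection of an interval into $\bR$ is strictly monotone; and as $\dif(0)=0$ it maps $\Rzp$ bijectively onto $\Rzp$, so $\restr{\dif}{\Rzp}$ is a strictly monotone bijection of $\Rzp$ which, by the definition of $\HRz$, is a $\Ck$-diffeomorphism of $\Rzp$. Hence $\gamma(\dif)=\restr{\dif}{\Rzp}\in\DRzp$, so $\gamma$ is well defined. For $\dif,\gdif\in\HRz$ one has $\gdif(\Rzp)=\Rzp$, whence $\restr{(\dif\circ\gdif)}{\Rzp}=\restr{\dif}{\Rzp}\circ\restr{\gdif}{\Rzp}$, so $\gamma$ is a homomorphism. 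Finally, if $\gamma(\dif)=\gamma(\gdif)$, then $\dif$ and $\gdif$ agree on $\Rzp$, and since both fix $0$ they agree on $\bR$; thus $\gamma$ is injective.

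\textbf{Surjectivity.} Given $\psi\in\DRzp$, I would define $\dif\colon\bR\to\bR$ by $\dif(0):=0$ and $\restr{\dif}{\Rzp}:=\psi$ and show that $\dif\in\HRz$. Being a homeomorphism of $\Rzp$, $\psi$ permutes the two connected components $(-\infty,0)$ and $(0,\infty)$ of $\Rzp$, and monotonicity decides how. If $\psi$ is increasing, then $\psi(\px)<\psi(\py)$ whenever $\px<0<\py$, which prevents $\psi$ from interchanging the two components, since that would force $\sup_{\px<0}\psi(\px)=+\infty=\inf_{\py>0}\psi(\py)$; hence $\psi$ maps each half-line onto itself. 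If $\psi$ is decreasing, the same inequality forces $\psi$ to interchange the two half-lines. In either case $\psi$ restricts to a monotone homeomorphism of each open half-line onto an open half-line whose finite endpoint is $0$, so $\lim_{\px\to 0^{-}}\psi(\px)=\lim_{\px\to 0^{+}}\psi(\px)=0$. Therefore $\dif$ is continuous at $0$, hence a continuous bijection of $\bR$; and it is strictly monotone on all of $\bR$ — increasing in the first case, decreasing in the second, because $\dif(0)=0$ separates $\dif\bigl((-\infty,0)\bigr)$ from $\dif\bigl((0,\infty)\bigr)$ — so $\dif$ is a homeomorphism of $\bR$. Since $\dif(0)=0$ and $\restr{\dif}{\Rzp}=\psi$ is a $\Ck$-diffeomorphism, we conclude $\dif\in\HRz$ with $\gamma(\dif)=\psi$, as required.

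\textbf{Main obstacle.} There is essentially none beyond bookkeeping. The only genuine point is the local behaviour at $0$ of a monotone homeomorphism of $\Rzp$, which is settled by the component-permutation dichotomy in the surjectivity step; everything else is formal.
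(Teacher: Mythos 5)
Your proof is correct and follows essentially the same route as the paper: the well-definedness, homomorphism and injectivity parts are the "evident monomorphism" step, and your surjectivity argument is exactly the paper's $0$-extension of $\psi\in\DRzp$, with the monotonicity/component-permutation dichotomy (which the paper compresses into "since $\gdif$ is monotone, $\lim_{\px\to0}\gdif(\px)=0$") spelled out in more detail.
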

\begin{proof}
Evidently, $\gamma$ is a monomorphism, and we need to show that $\gamma$ is surjective.
Let $\gdif\in\DRzp$.
Since $\gdif$ is monotone, $\lim\limits_{\px\to0}\gdif(\px)=0$, whence $\gdif$ extends to a homeomorphism $\hat{\gdif}\colon\bR\to\bR$, defined by $\hat{\gdif}=\gdif$ on $\Rzp$ and $\hat{\gdif}(0)=0$.
It follows, that $\hat{\gdif}\in\HRz$ and $\gamma(\hat{\gdif})=\gdif$, i.e.\ $\gamma$ is surjective as well.
\end{proof}

For each $\gdif\in\DRzp$ the homeomorphism $\gamma^{-1}(\gdif)$ will be called the \term{$0$-extension} on $\gdif$.
Note also that we have the following inclusion
\[
    \DiffRz \ \subset  \ \aHRzp \ \stackrel{\gamma^{-1}}{\equiv} \ \DRzp.
\]
Thus, one can regard $\DiffRz$ as a subgroup of $\DRzp$.
It is easy to check that $\DiffRz$ is not normal in $\DRzp$.

\begin{sublemma}\label{lm:Dnb_is_normal}
The group $\aDRnbp$ is normal in $\aHRzp$.
\end{sublemma}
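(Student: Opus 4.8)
The plan is to prove normality directly: given arbitrary $\dif\in\aHRzp$ and $\gdif\in\aDRnbp$, I would show that the conjugate $\dif\circ\gdif\circ\dif^{-1}$ again lies in $\aDRnbp$, that is, that it is a $\Ck$-diffeomorphism of $\bR$ which is the identity on some neighbourhood of $0$. Two things must be checked: that $\dif\circ\gdif\circ\dif^{-1}$ is a genuine $\Ck$-diffeomorphism of all of $\bR$ (the only potentially delicate point being its behaviour at $0$), and that it is fixed near $0$; as it turns out, the latter fact makes the former automatic at $0$.

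First I would record the elementary observation that any $\dif\in\aHRzp$, being a self-homeomorphism of $\bR$, is strictly monotone, and since $\dif(0)=0$ it carries every symmetric interval $(-\eps;\eps)$ onto an open interval containing $0$. Now pick $\eps>0$ with $\gdif(\px)=\px$ for $\px\in(-\eps;\eps)$, which exists because $\gdif\in\aDRnbp$, and choose $\delta>0$ small enough that $(-\delta;\delta)\subset\dif((-\eps;\eps))$, equivalently $\dif^{-1}((-\delta;\delta))\subset(-\eps;\eps)$. Then for every $\px\in(-\delta;\delta)$ one has $\gdif(\dif^{-1}(\px))=\dif^{-1}(\px)$, hence $\dif\circ\gdif\circ\dif^{-1}(\px)=\dif(\dif^{-1}(\px))=\px$; thus $\dif\circ\gdif\circ\dif^{-1}$ restricts to the identity on the neighbourhood $(-\delta;\delta)$ of $0$.

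Next I would verify that $\dif\circ\gdif\circ\dif^{-1}$ is a $\Ck$-diffeomorphism of $\bR$. On $\Rzp$ it is the composition of the $\Ck$-diffeomorphisms $\restr{\dif}{\Rzp}$, $\restr{\gdif}{\Rzp}$ and $\restr{\dif^{-1}}{\Rzp}$, hence a $\Ck$-diffeomorphism of $\Rzp$; together with $\dif\circ\gdif\circ\dif^{-1}(0)=0$ this already gives $\dif\circ\gdif\circ\dif^{-1}\in\aHRzp$. On the neighbourhood $(-\delta;\delta)$ it equals the identity, hence is $\Cinfty$ there. Combining these, $\dif\circ\gdif\circ\dif^{-1}$ is $\Ck$ at every point of $\bR$, and applying the same argument to its inverse $\dif\circ\gdif^{-1}\circ\dif^{-1}$ shows it is in fact a $\Ck$-diffeomorphism of $\bR$. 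Being also fixed on $(-\delta;\delta)$, it belongs to $\aDRnbp$. Hence $\dif\,\aDRnbp\,\dif^{-1}\subset\aDRnbp$ for every $\dif\in\aHRzp$, which is exactly normality.

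I do not expect a genuine obstacle here: the one step that needs a moment's care is using monotonicity (equivalently, openness as a map) of $\dif$ to produce the interval $(-\delta;\delta)$ on which the conjugate is the identity; once that is in place, the remainder is routine bookkeeping with compositions of $\Ck$-diffeomorphisms.
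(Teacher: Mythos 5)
Your proof is correct and follows essentially the same route as the paper: conjugate an element of $\aDRnbp$ by an element of $\aHRzp$, observe that the result is the identity near $0$ and a $\Ck$-diffeomorphism on $\Rzp$, and conclude it is a $\Ck$-diffeomorphism of $\bR$ fixed near $0$. The only difference is that you spell out the (easy) verification that the conjugate is fixed on a neighbourhood of $0$, which the paper states without detail.
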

\begin{proof}
Let $\dif\in\aDRnbp$ and $\gdif\in\aHRzp$.
Then $\gdif\circ\dif\circ\gdif^{-1}$ is fixed near $0\in\bR$ and its restriction to $\Rzp$ is a $\Ck$-diffeomorphism.
Hence $\gdif\circ\dif\circ\gdif^{-1}$ is a $\Ck$-diffeomorphism of $\bR$ fixed near $0$, i.e.\ it belongs to $\aDRnbp$.
\end{proof}

\subsection{Minimal atlases on $\DLine$}
A $\Ck$-atlas $\UAtlas$ on $\DLine$ will be called \term{minimal} if it consisting of only two charts $\Umap\colon\Na\to\bR$ and $\Vmap\colon\Nb\to\bR$ being homeomorphisms onto and such that $\Umap(\pta)=\Vmap(\ptb)=0$.

Notice that for every such an atlas, its transition map from $(\Na,\Umap)$ to $(\Nb,\Vmap)$
\[
    \Vmap\circ\Umap^{-1}\colon\bR\setminus0 \to \bR\setminus0
\]
is a \term{monotone} $\Ck$-diffeomorphism of $\Rzp$, i.e.\ it belongs to $\DRzp$.
The corresponding $0$-extension of $\Vmap\circ\Umap^{-1}$ will be denoted by $\Utrmap$, i.e.\
\[
    \Utrmap:=\gamma(\Vmap\circ\Umap^{-1}) \in \aHRzp.
\]
Note also that by definition, $\UAtlas$ is \term{orientable} iff its unique transition map $\Vmap\circ\Umap^{-1}$ preserves orientation.
The latter is, evidently, equivalent to the assumption that $\Utrmap$ preserve orientations.

The following lemma is the reason of our detailed exposition of uniqueness of $\Ck$-structures on $\bR$.
\begin{sublemma}\label{lm:minimal_atlas_exist}
Every $\Ck$-structure $\Ustruct$ on $\DLine$ is represented by some orientable minimal atlas.
\end{sublemma}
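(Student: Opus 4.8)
The plan is to build a minimal \emph{orientable} atlas out of an arbitrary one in three moves: pass to a two-chart atlas whose charts are carried by $\Na$ and $\Nb$, translate the two coordinate maps so that $\pta$ and $\ptb$ are sent to $0$, and, if necessary, reflect one of the two charts in order to make the single transition map orientation preserving.

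First I would reduce to two charts and normalize the origins. Both $\Na=\DLine\setminus\ptb$ and $\Nb=\DLine\setminus\pta$ are open in $\DLine$ (because $\DLine$ is $T_1$) and homeomorphic to $\bR$, while $\DLine$ itself is one-dimensional and locally Euclidean. Hence, starting from any $\Ck$-atlas $\WAtlas$ representing $\Ustruct$, I would apply Corollary~\ref{cor:adding_one_chart} first with the open set $\Na$ and then, to the enlarged atlas, with $\Nb$; this produces homeomorphisms onto $\Umap\colon\Na\to\bR$ and $\Vmap_{0}\colon\Nb\to\bR$ such that $(\Na,\Umap)$ is $\Ck$-compatible with $\WAtlas$ and $(\Nb,\Vmap_{0})$ is $\Ck$-compatible with $\WAtlas\cup\{(\Na,\Umap)\}$. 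Since $\Na\cup\Nb=\DLine$, the two charts $(\Na,\Umap)$ and $(\Nb,\Vmap_{0})$ form a $\Ck$-atlas which is $\Ck$-compatible with $\WAtlas$, so it represents $\Ustruct$. Now replace $\Umap$ and $\Vmap_{0}$ by $\px\mapsto\Umap(\px)-\Umap(\pta)$ and $\Vmap\colon\px\mapsto\Vmap_{0}(\px)-\Vmap_{0}(\ptb)$; since translations of $\bR$ are $\Cinfty$-diffeomorphisms, post-composing a chart with a translation keeps it a homeomorphism onto $\bR$ and alters its transition maps only by composition with a translation, hence preserves $\Ck$-compatibility with all other charts. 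Therefore $\UAtlas:=\{(\Na,\Umap),(\Nb,\Vmap)\}$ is a $\Ck$-atlas representing $\Ustruct$ with $\Umap(\pta)=\Vmap(\ptb)=0$, i.e.\ a minimal atlas. (This already proves that every $\Ck$-structure on $\DLine$ admits a minimal atlas.)

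It remains to arrange orientability. By the observation made right after the definition of minimal atlases, the unique transition map $\gdif:=\Vmap\circ\Umap^{-1}$ of $\UAtlas$ lies in $\DRzp$, i.e.\ it is a globally monotone $\Ck$-diffeomorphism of $\Rzp$. A globally increasing homeomorphism of $\Rzp$ maps each of the two rays $(-\infty;0)$ and $(0;+\infty)$ onto itself, whereas a globally decreasing one interchanges them; and since $\kk\geq1$, the derivative of $\gdif$ vanishes nowhere, so in the increasing case $\gdif'>0$ on all of $\Rzp$ and in the decreasing case $\gdif'<0$ on all of $\Rzp$. In the first case $\gdif$ preserves orientation, so $\UAtlas$ is already orientable and we are done. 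In the second case I would replace the chart $(\Nb,\Vmap)$ by $(\Nb,\qdif\circ\Vmap)$, where $\qdif(\px)=-\px$: this is again a homeomorphism onto $\bR$ with $(\qdif\circ\Vmap)(\ptb)=0$, and the collection $\{(\Na,\Umap),(\Nb,\Vmap),(\Nb,\qdif\circ\Vmap)\}$ is a $\Ck$-atlas, because the only transition maps not already present in $\UAtlas$ are $\qdif\circ\gdif$ and $\qdif$, both $\Ck$. Hence $\{(\Na,\Umap),(\Nb,\qdif\circ\Vmap)\}$ is $\Ck$-compatible with $\UAtlas$, so it also represents $\Ustruct$, and its transition map $\qdif\circ\gdif$ has derivative $-\gdif'>0$ everywhere; thus it is an orientable minimal atlas representing $\Ustruct$.

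I do not expect any serious obstacle: essentially all the content is concentrated in Corollary~\ref{cor:adding_one_chart}, which itself relies on the analysis of $\Ck$-atlases on $\bR$ carried out in Section~\ref{sect:diff_struct_on_bR}; what remains is the bookkeeping above. The only points that want a little care are that post-composing a coordinate map with a diffeomorphism of $\bR$ does not disturb $\Ck$-compatibility, and that a globally monotone $\Ck$-diffeomorphism of $\Rzp$ has a derivative of one and the same sign on both rays, which is exactly what allows a single reflection $\qdif$ to repair orientability.
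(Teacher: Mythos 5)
Your proposal is correct and follows essentially the same route as the paper: invoke Corollary~\ref{cor:adding_one_chart} to get compatible charts on $\Na$ and $\Nb$, translate so that $\pta$ and $\ptb$ go to $0$, and post-compose one chart with $\px\mapsto-\px$ if the transition map reverses orientation. The extra care you take (that post-composition with a diffeomorphism of $\bR$ preserves compatibility, and that global monotonicity of the transition map on $\Rzp$ makes a single reflection suffice) is exactly the implicit content of the paper's shorter argument.
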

\begin{proof}
Since $\Na$ and $\Nb$ are open subsets homeomorphic with $\bR$, it follows from Corollary~\ref{cor:adding_one_chart} that there exist two homeomorphisms onto $\Umap\colon\Na\to\bR$ and $\Vmap\colon\Nb\to\bR$ being $\Ck$-compatible with $\Ustruct$.
Replacing $\Umap$ and $\Vmap$ with $\Umap - \Umap(\pta)$ and $\Vmap - \Vmap(\ptb)$ respectively, one can achieve that $\Umap(\pta)=\Vmap(\ptb)=0$.
Moreover, replacing (if necessary) $\Vmap$ with $-\Vmap$, we can assume that the transition map $\Vmap^{-1}\circ\Umap$ preserves orientation.
As $\DLine = \Na\cup\Nb$, we have that $\UAtlas=\{ (\Na,\Umap),(\Nb,\Vmap) \}$ is the desired orientable minimal $\Ck$-atlas belonging to $\Ustruct$.
\end{proof}

\begin{subtheorem}\label{th:Dfix_Dex}
Let $\UAtlas=\{(\Na,\Umap),(\Nb,\Vmap)\}$ and $\VAtlas=\{(\Na,\tUmap),(\Nb,\tVmap)\}$ be two minimal $\Ck$-atlases on $\DLine$ corresponding to $\Ck$-structures $\Ustruct$ and $\Vstruct$ respectively, and $\Utrmap$ and $\Vtrmap$ be $0$-extensions of their respective transition maps $\Vmap\circ\Umap^{-1}$ and $\tVmap\circ\tUmap^{-1}$.
\begin{enumerate}[label={\rm\arabic*)}, leftmargin=*, itemsep=1ex]
\item\label{enum:th:Dfix_Dex:bij}
Then the following correspondences $\bijDfix$ and $\bijDex$ are well-defined bijections:
\begin{gather*}
\begin{aligned}
&\bijDfix \colon \DiffLxOFix[\Ustruct,\Vstruct] \to \DiffRz \, \cap \, \Vtrmap\,\DiffRz\,\Utrmap^{-1},
&& \bijDfix(\dif) =  \tVmap\circ\dif\circ\Vmap^{-1}, \\
&\bijDex \colon  \DiffLxOEx[\Ustruct,\Vstruct] \to \DiffRz \, \cap \, \Vtrmap\,\DiffRz\,\Utrmap,
&\qquad\qquad&
\bijDex(\dif) = \tUmap\circ\dif\circ\Vmap^{-1}, \\
\end{aligned} \\
\begin{aligned}
&\xymatrix@C=5em{
    \Nb \ar[r]^-{\dif} \ar[d]_-{\Vmap} & \Nb \ar[d]^-{\tVmap} \\
    \bR \ar[r]^-{\bijDfix(\dif)} & \bR
}
&\qquad\qquad&
\xymatrix@C=5em{
    \Nb \ar[r]^-{\dif} \ar[d]_-{\Vmap} & \Na \ar[d]^-{\tUmap} \\
    \bR \ar[r]^-{\bijDex(\dif)} & \bR
}
\end{aligned}
\end{gather*}
Thus, $\bijDfix(\dif)$ is a local presentation of $\dif\in\DiffLxOFix[\Ustruct,\Vstruct]$ with respect to the charts $(\Nb,\Vmap)$ and $(\Nb,\tVmap)$, while $\bijDex(\dif)$ is a local presentation of $\dif\in\DiffLxOEx[\Ustruct,\Vstruct]$ with respect to the charts $(\Nb,\Vmap)$ and $(\Na,\tUmap)$;

\item\label{enum:th:Dfix_Dex:orientation}
If $\tVmap$ and $\Vmap$ mutually preserve or reverse orientations, then
\begin{align*}
    &\bijDfix\bigl(\DiffLxOrPresxOFix[\Ustruct,\Vstruct]\bigr) \subset \DiffRzOrPres, &
    &\bijDfix\bigl(\DiffLxOrRevxOFix[\Ustruct,\Vstruct]\bigr) \subset \DiffRzOrRev.
\end{align*}
Similarly, if $\tUmap$ and $\Vmap$ mutually preserve or reverse orientations, then
\begin{align*}
    &\bijDex\bigl(\DiffLxOrPresxOEx[\Ustruct,\Vstruct]\bigr) \subset \DiffRzOrPres, &
    &\bijDex\bigl(\DiffLxOrRevxOEx[\Ustruct,\Vstruct]\bigr) \subset \DiffRzOrRev;
\end{align*}

\item\label{enum:th:Dfix_Dex:self_bij}
If $\UAtlas=\VAtlas$, so $\Utrmap=\Vtrmap$ and $\Ustruct=\Vstruct$, then
\[ \bijDfix\colon\DiffLxOFix[\Ustruct] \to \DiffRz \, \cap \, \Utrmap\,\DiffRz\,\Utrmap^{-1} \]
is an isomorphism of groups.
Moreover, the group $\DiffLxOrPresxOFix[\Ustruct]$ contains a subgroup isomorphic with $\aDRnbp$, and therefore it is uncountable;

\item\label{enum:th:Dfix_Dex:DAB_pm_fixex}
If $\DiffLx[\Ustruct,\Vstruct]{*,\bullet}$ is non-empty for some $*\in\{\pm\}$ and $\bullet\in\{\mathrm{fix}, \mathrm{ex} \}$, then it is uncountable.
\end{enumerate}
\end{subtheorem}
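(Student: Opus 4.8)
My plan is to prove part~\ref{enum:th:Dfix_Dex:bij} first, as it carries essentially all of the content, and then deduce parts \ref{enum:th:Dfix_Dex:orientation}, \ref{enum:th:Dfix_Dex:self_bij}, \ref{enum:th:Dfix_Dex:DAB_pm_fixex} from it together with Lemmas~\ref{lm:Dnb_is_normal} and~\ref{lm:branch_pt_invariant}. For \ref{enum:th:Dfix_Dex:bij}, fix $\dif\in\DiffLxOFix[\Ustruct,\Vstruct]$. Since $\dif$ fixes $\pta$ and $\ptb$ it preserves $\Na$, $\Nb$ and $\Na\cap\Nb$ setwise, so the two local presentations $a:=\tUmap\circ\dif\circ\Umap^{-1}$ and $b:=\bijDfix(\dif)=\tVmap\circ\dif\circ\Vmap^{-1}$ are homeomorphisms of $\bR$ fixing $0$; because the charts used belong to the atlases $\UAtlas$, $\VAtlas$ representing $\Ustruct$, $\Vstruct$ and $\dif$ is a $\Ck$-diffeomorphism, a standard change-of-charts argument gives that $a$ and $b$ and their inverses are $\Ck$, i.e.\ $a,b\in\DiffRz$. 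Comparing these two presentations over $\bR\setminus0$ through the transition maps of $\UAtlas$ and $\VAtlas$ (whose $0$-extensions are $\Utrmap$, $\Vtrmap$) gives $b=\Vtrmap\circ a\circ\Utrmap^{-1}$ on $\bR\setminus0$, hence — since both sides fix $0$ — on all of $\bR$; thus $\bijDfix$ takes values in $\DiffRz\cap\Vtrmap\DiffRz\Utrmap^{-1}$. Injectivity is immediate, since two diffeomorphisms fixing $\pta$ and agreeing on $\Nb=\DLine\setminus\pta$ coincide. For surjectivity, given $b$ in the target choose $a\in\DiffRz$ with $b=\Vtrmap\circ a\circ\Utrmap^{-1}$ and set $\dif:=\tUmap^{-1}\circ a\circ\Umap$ on $\Na$ and $\dif:=\tVmap^{-1}\circ b\circ\Vmap$ on $\Nb$: the relation $b=\Vtrmap\circ a\circ\Utrmap^{-1}$ is precisely what makes these two formulas agree on $\Na\cap\Nb$, so $\dif$ is a well-defined homeomorphism of $\DLine$ fixing $\pta$ and $\ptb$ whose presentations in the chosen charts are the $\Ck$-diffeomorphisms $a$, $b$; hence $\dif\in\DiffLxOFix[\Ustruct,\Vstruct]$ and $\bijDfix(\dif)=b$. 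The case of $\bijDex$ is entirely analogous, once one notes that a homeomorphism exchanging $\pta$ and $\ptb$ interchanges $\Na$ and $\Nb$, so the relevant presentations are $\tUmap\circ\dif\circ\Vmap^{-1}$ and $\tVmap\circ\dif\circ\Umap^{-1}$; the two commutative squares in the statement record which charts appear.

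Part~\ref{enum:th:Dfix_Dex:orientation} is a direct count of orientations in these formulas: the orientation type of $\bijDfix(\dif)$ — whether its restriction to $\Rzp$ increases — is the product of the type of $\dif$ with that of the transition homeomorphism $\tVmap\circ\Vmap^{-1}$, so under the stated compatibility of $\tVmap$ with $\Vmap$ it equals the type of $\dif$, and similarly for $\bijDex$ with $\tUmap$ in place of $\tVmap$. For part~\ref{enum:th:Dfix_Dex:self_bij}, when $\UAtlas=\VAtlas$ we have $\tVmap=\Vmap$ and $\bijDfix(\dif\circ\gdif)=\bijDfix(\dif)\circ\bijDfix(\gdif)$, so the bijection of \ref{enum:th:Dfix_Dex:bij} is an isomorphism onto the subgroup $\DiffRz\cap\Utrmap\DiffRz\Utrmap^{-1}$. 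Now $\aDRnbp\subset\DiffRz$, and by Lemma~\ref{lm:Dnb_is_normal} $\aDRnbp$ is normal in $\aHRzp$, so $\Utrmap^{-1}\dif\Utrmap\in\aDRnbp\subset\DiffRz$ for every $\dif\in\aDRnbp$; hence $\aDRnbp\subseteq\DiffRz\cap\Utrmap\DiffRz\Utrmap^{-1}$. Since $\aDRnbp\subset\JDiffRzOrPres\subseteq\DiffRzOrPres$, part~\ref{enum:th:Dfix_Dex:orientation} (with $\tVmap=\Vmap$) shows $\bijDfix^{-1}(\aDRnbp)$ is a subgroup of $\DiffLxOrPresxOFix[\Ustruct]$ isomorphic to $\aDRnbp$; and $\aDRnbp$ is uncountable, since it contains a continuum of diffeomorphisms supported in a fixed compact interval disjoint from $0$.

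Part~\ref{enum:th:Dfix_Dex:DAB_pm_fixex} then follows by left translation: fix $\dif_0\in\DiffLx[\Ustruct,\Vstruct]{*,\bullet}$; for each $\gdif\in\DiffLxOrPresxOFix[\Ustruct]$ the composite $\dif_0\circ\gdif$ is a $\Ck$-diffeomorphism $(\DLine,\Ustruct)\to(\DLine,\Vstruct)$ whose action on $\BranchPtDLine$ coincides with that of $\dif_0$ (as $\gdif$ fixes both branch points) and whose orientation type is that of $\dif_0$, so $\dif_0\circ\gdif\in\DiffLx[\Ustruct,\Vstruct]{*,\bullet}$; the map $\gdif\mapsto\dif_0\circ\gdif$ is injective, hence it embeds the (uncountable, by part~\ref{enum:th:Dfix_Dex:self_bij}) group $\DiffLxOrPresxOFix[\Ustruct]$ into $\DiffLx[\Ustruct,\Vstruct]{*,\bullet}$, which is therefore uncountable. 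I expect the main obstacle to be the surjectivity in \ref{enum:th:Dfix_Dex:bij}: one must reassemble a single $\Ck$-diffeomorphism of the non-Hausdorff space $\DLine$ from the two chart-wise formulas, and it is exactly the double-coset membership $b\in\Vtrmap\DiffRz\Utrmap^{-1}$ (resp.\ its $\bijDex$-analogue) that forces the gluing along $\Na\cap\Nb$ to be consistent; checking that the glued map is then $\Ck$ with respect to $\Ustruct$ and $\Vstruct$ reduces to reading off its local presentations, and everything else is formal.
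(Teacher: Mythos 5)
Your proposal is correct and follows essentially the same route as the paper: the same chart-wise presentations $a=\tUmap\circ\dif\circ\Umap^{-1}$, $b=\tVmap\circ\dif\circ\Vmap^{-1}$ with the identity $b=\Vtrmap\circ a\circ\Utrmap^{-1}$ for well-definedness, injectivity from determination on $\Nb$, surjectivity by gluing the two chart formulas, and then orientation bookkeeping, the conjugation/homomorphism argument together with normality of $\aDRnbp$ (Lemma~\ref{lm:Dnb_is_normal}) for part~\ref{enum:th:Dfix_Dex:self_bij}, and translation by a fixed diffeomorphism (as in Lemma~\ref{lm:DiffM_DiffN}) for part~\ref{enum:th:Dfix_Dex:DAB_pm_fixex}. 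No gaps worth noting.
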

\begin{proof}
\ref{enum:th:Dfix_Dex:bij}
First we show that $\bijDfix$ is a well-defined bijection.
Let $\dif\in\DiffLxOFix[\Ustruct,\Vstruct]$.
Then $\dif(\Na)=\Na$ and $\dif(\Nb)=\Nb$, whence we have the following commutative diagrams:
\begin{align*}
    &\xymatrix@R=4ex@C=12ex{
        \Na \ar[r]^-{\dif}  \ar[d]_-{\Umap} & \Na \ar[d]^-{\tUmap} \\
        \bR \ar[r]^-{\adif=\tUmap\circ\dif\circ\Umap^{-1}} & \bR
    }
    &
    \qquad\qquad
    &\xymatrix@R=4ex@C=12ex{
        \Nb \ar[r]^-{\dif}  \ar[d]_-{\Vmap} & \Nb \ar[d]^-{\tVmap} \\
        \bR \ar[r]^-{\bdif=\tVmap\circ\dif\circ\Vmap^{-1}} & \bR
    }
\end{align*}
in which the lower arrows $\adif:=\tUmap\circ\dif\circ\Umap^{-1}$ and $\bdif:=\tVmap\circ\dif\circ\Vmap^{-1}$ are local presentations of $\dif$ in the corresponding charts.
We need to show that $\bijDfix(\dif) := \bdif \in \DiffRz \, \cap \, \Vtrmap\,\DiffRz\,\Utrmap^{-1}$ and that correspondence $\dif\mapsto\bdif$ is a bijection.

The assumption that $\dif$ is a $\Ck$-diffeomorphism of $\DLine$ means that $a,b\in\DiffRz$.
Notice that on $\Rzp$ we have the following identity:
\[
   b
   = \tVmap\circ\dif\circ\Vmap^{-1}
   = (\tVmap\circ\tUmap^{-1}) \circ
     (\tUmap\circ\dif\circ\Umap^{-1}) \circ
     (\Vmap\circ\Umap^{-1})^{-1}.
\]
Taking $0$-extensions of these diffeomorphisms we obtain that
$$b = \Vtrmap\circ a \circ \Utrmap^{-1}\in\Vtrmap\,\DiffRz\,\Utrmap^{-1}.$$
But $\bdif\in\DiffRz$ as well, whence $\bdif\in\DiffRz \, \cap \, \Vtrmap\,\DiffRz\,\Utrmap^{-1}$, and thus the map $\bijDfix$ is well-defined.

Moreover, since $\dif(\pta)=\pta$, $\dif$ is uniquely determined by its restriction to $\DLine\setminus\{\pta\}=\Nb$, and therefore by the composition $\bdif=\tVmap\circ\dif\circ\Vmap^{-1}$.
Hence $\bijDfix$ is injective.

Let us prove that $\bijDfix$ is surjective.
Let $\bdif\in\DiffRz \, \cap \, \Vtrmap\,\DiffRz\,\Utrmap^{-1}$, i.e.\
\[
    b = \Vtrmap\circ a \circ \Utrmap^{-1} = \tVmap\circ\tUmap^{-1} \circ a \circ \Umap\circ\Vmap^{-1}
\]
for some $\adif\in\DiffRz$.
The latter is equivalent to the following identity:
\begin{equation}\label{equ:fix:tuau_tvbv}
    \tUmap^{-1}\circ a \circ \Umap = \tVmap^{-1}\circ b \circ \Vmap.
\end{equation}
Define the map
\begin{equation}\label{equ:dif_constr_from_a_b}
    \dif\colon\DLine\to\DLine,
    \qquad
    \dif(\px) =
    \begin{cases}
        \tUmap^{-1}\circ a \circ \Umap(\px), & \px\in\Na, \\
        \tVmap^{-1}\circ b \circ \Vmap(\px), & \px\in\Nb.
    \end{cases}
\end{equation}
Due to~\eqref{equ:fix:tuau_tvbv}, the formulas for $\dif$ agree on $\Na\cap\Nb$, and therefore $\dif$ is a continuous bijection.
Moreover, we have the following commutative diagrams:
\begin{align*}
    &\xymatrix@R=4ex@C=12ex{
        \Na \ar[r]^-{\restr{\dif}{\Na} = \tUmap^{-1}\circ a \circ \Umap}  \ar[d]_-{\Umap} & \Na \ar[d]^-{\tUmap} \\
        \bR \ar[r]^-{a} & \bR
    }
    &
    &\xymatrix@R=4ex@C=12ex{
        \Nb \ar[r]^-{\restr{\dif}{\Nb} = \tVmap^{-1}\circ b \circ \Vmap}  \ar[d]_-{\Vmap} & \Nb \ar[d]^-{\tVmap} \\
        \bR \ar[r]^-{b} & \bR
    }
\end{align*}
which show that the local presentation of $\dif$ in the charts $\Na$ and $\Nb$ are $\Ck$-diffeomorphisms $a$ and $b$ respectively.
Therefore $\dif$ is a $\Ck$-diffeomorphism between $(\DLine,\Ustruct)$ and $(\DLine,\Vstruct)$.
This proves that $\bijDfix$ is a bijection.

The proof for $\bijDex$ is similar.
Let us just mention that if $\dif\in\DiffLxOEx[\Ustruct,\Vstruct]$, then $\dif(\Na)=\Nb$ and $\dif(\Nb)=\Na$, whence we have the following commutative diagrams:
\begin{align*}
    &\xymatrix@R=4ex@C=12ex{
        \Na \ar[r]^-{\dif}  \ar[d]_-{\Umap} & \Nb \ar[d]^-{\tVmap} \\
        \bR \ar[r]^-{\adif=\tVmap\circ\dif\circ\Umap^{-1}} & \bR
    }
    &
    \qquad\qquad
    &\xymatrix@R=4ex@C=12ex{
        \Nb \ar[r]^-{\dif}  \ar[d]_-{\Vmap} & \Na \ar[d]^-{\tUmap} \\
        \bR \ar[r]^-{\bdif=\tUmap\circ\dif\circ\Vmap^{-1}} & \bR
    }
\end{align*}
in which the lower arrows $\adif:=\tVmap\circ\dif\circ\Umap^{-1}$ and $\bdif:=\tUmap\circ\dif\circ\Vmap^{-1}$ are local presentations of $\dif$ in the corresponding charts.
Then $\bijDex$ is defined by $\bijDex(\dif) = \bdif$.
The verification that $\bijDex$ is a well-defined bijection is similar to the case of $\bijDfix$ and we leave it for the reader.

\ref{enum:th:Dfix_Dex:orientation}
This statement directly follows from the definition of bijections $\bijDfix$ and $\bijDex$.

\ref{enum:th:Dfix_Dex:self_bij}
Suppose $\UAtlas=\VAtlas$, so $\Vmap = \tVmap$ and $\Vtrmap=\Utrmap$.
Then $\Utrmap\,\DiffRz\,\Utrmap^{-1}$ is a conjugated to $\DiffRz$ subgroup of $\aHRzp$, whence $\DiffRz \, \cap \, \Utrmap\,\DiffRz\,\Utrmap^{-1}$ is also a subgroup of $\aHRzp$.
Moreover, $\bijDfix(\dif)=\Vtrmap\circ\dif\circ\Vtrmap^{-1}$ for all $\dif\in\aHRzp$, and therefore $\bijDfix$ is a homomorphism of groups.
Since $\bijDfix$ is also a bijection, it is an isomorphism.

Also, since by Lemma~\ref{lm:Dnb_is_normal}, $\aDRnbp$ is normal in $\aHRzp$, we see that
\[
    \aDRnbp \ =       \  \Utrmap\aDRnbp\Utrmap^{-1}
            \ \subset \  \DiffRz \ \cap \ \Utrmap\,\DiffRz\,\Utrmap^{-1}.
\]
Hence $(\bijDLxOFix[\Ustruct,\Ustruct])^{-1}(\aDRnbp)$ is the desired subgroup of $\DiffLxOrPresxOFix[\Ustruct]$ isomorphic to $\aDRnbp$.

\ref{enum:th:Dfix_Dex:DAB_pm_fixex}
It follows from Lemma~\ref{lm:DiffM_DiffN} that for each $\dif\in\DiffLx[\Ustruct,\Vstruct]{*,\bullet}$ we have three bijections:
\[
\xymatrix{
    & \DiffLx[\Ustruct,\Vstruct]{*,\bullet} \ar[rd]^-{\ \kdif\,\mapsto\,\kdif\circ\dif^{-1}}
    \\
    \DiffLxOrPresxOFix[\Ustruct]
    \ar[ur]^-{\gdif\,\mapsto\,\dif\circ\gdif \ }
    \ar[rr]^-{\gdif \, \mapsto \, \dif\circ\gdif\circ\dif^{-1}} &&
    \DiffLxOrPresxOFix[\Vstruct]
}
\]
Hence, if $\DiffLx[\Ustruct,\Vstruct]{*,\bullet}\ne\varnothing$, then it is uncountable since $\DiffLxOFix[\Ustruct]$ is so.
\end{proof}

\begin{subcorollary}\label{cor:DiffAB_expl}
  Let $\kk\in\bN\cup\{\infty\}$ and $\Ustruct$ and $\Vstruct$ be two $\Ck$-structures on $\DLine$.
  \begin{enumerate}[wide=0pt, label={\rm\arabic*)}, itemsep=1ex]
    \item\label{enum:lm:DiffL_AB_fix:exists}
    Then the following conditions are equivalent:
    \begin{enumerate}[label={\rm(\roman*${}^{\mathrm{fix}}$)}, leftmargin=*, itemsep=1ex]
    \item\label{enum:lm:DiffL_AB_fix:exists:diff}
    $\DiffLxOFix[\Ustruct,\Vstruct]\ne\varnothing$;
    \item\label{enum:lm:DiffL_AB_fix:exists:some_ga_gb_equiv}
    there exist minimal atlases $\UAtlas=\{(\Na,\Umap),(\Nb,\Vmap)\} \in\Ustruct$ and $\VAtlas=\{(\Na,\tUmap),(\Nb,\tVmap)\} \in \Vstruct$ such that $\Vtrmap = \bdif \circ\Utrmap\circ \adif^{-1}$ for some $\adif,\bdif\in\DiffRz$, i.e.\ the $0$-extensions $\Utrmap$ and $\Vtrmap$ of their respective transition maps $\Vmap\circ\Umap^{-1}$ and $\tVmap\circ\tUmap^{-1}$ belong to the same $\DiffRz$-double coset in $\aHRzp$;

    \item\label{enum:lm:DiffL_AB_fix:exists:any_ga_gb_equiv}
    property~\ref{enum:lm:DiffL_AB_fix:exists:some_ga_gb_equiv} holds for any pair of minimal atlases $\UAtlas\in\Ustruct$ and $\VAtlas\in\Vstruct$.
    \end{enumerate}

    \item\label{enum:lm:DiffL_AB_ex:exists}
    Similarly, the following conditions are also equivalent:
    \begin{enumerate}[label={\rm(\roman*${}^{\mathrm{ex}}$)}, leftmargin=*, itemsep=1ex]
    \item\label{enum:lm:DiffL_AB_ex:exists:diff}
    $\DiffLxOEx[\Ustruct,\Vstruct]\ne\varnothing$;
    \item\label{enum:lm:DiffL_AB_ex:exists:some_ga_gb_equiv}
    there exist minimal atlases $\UAtlas\in\Ustruct$ and $\VAtlas \in \Vstruct$ such that $\Vtrmap^{-1} = \bdif \circ\Utrmap\circ \adif^{-1}$ for some $\adif,\bdif\in\DiffRz$;

    \item\label{enum:lm:DiffL_AB_ex:exists:any_ga_gb_equiv}
    property~\ref{enum:lm:DiffL_AB_ex:exists:some_ga_gb_equiv} holds for any pair of minimal atlases $\UAtlas\in\Ustruct$ and $\VAtlas\in\Vstruct$.
    \end{enumerate}

    \item\label{enum:lm:DiffL_AB_all:exists}
    Hence, the following conditions are equivalent as well:
    \begin{enumerate}[label={\rm(\roman*)}, leftmargin=*, itemsep=1ex]
    \item\label{enum:lm:DiffL_AB_all:exists:diff}
    $\DiffLx[\Ustruct,\Vstruct]{} \equiv \DiffLxOFix[\Ustruct,\Vstruct] \sqcup \DiffLxOEx[\Ustruct,\Vstruct]  \ne\varnothing$;
    \item\label{enum:lm:DiffL_AB_all:exists:some_ga_gb_equiv}
    there exist minimal atlases $\UAtlas\in\Ustruct$ and $\VAtlas\in\Vstruct$ such that either $\Vtrmap = \bdif \circ\Utrmap\circ \adif^{-1}$ or $\Vtrmap^{-1} = \bdif \circ\Utrmap\circ \adif^{-1}$ for some $\adif,\bdif\in\DiffRz$, i.e.\ $\Utrmap$ and $\Vtrmap$ belong to the same $(\DiffRz,\pm)$-double coset in $\aHRzp$;

    \item\label{enum:lm:DiffL_AB_all:exists:any_ga_gb_equiv}
    property~\ref{enum:lm:DiffL_AB_all:exists:some_ga_gb_equiv} holds for any pair of minimal atlases $\UAtlas\in\Ustruct$ and $\VAtlas\in\Vstruct$.
    \end{enumerate}
  \end{enumerate}
\end{subcorollary}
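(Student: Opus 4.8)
The plan is to derive everything from Theorem~\ref{th:Dfix_Dex}\ref{enum:th:Dfix_Dex:bij} together with the elementary description of double cosets in Lemma~\ref{lm:CHD_partition}; no further geometric input is needed. For the given pair of $\Ck$-structures $\Ustruct,\Vstruct$, I would choose \emph{arbitrary} minimal atlases $\UAtlas=\{(\Na,\Umap),(\Nb,\Vmap)\}\in\Ustruct$ and $\VAtlas=\{(\Na,\tUmap),(\Nb,\tVmap)\}\in\Vstruct$ — these exist by Lemma~\ref{lm:minimal_atlas_exist} — and let $\Utrmap,\Vtrmap\in\aHRzp$ be the $0$-extensions of their transition maps. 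Theorem~\ref{th:Dfix_Dex}\ref{enum:th:Dfix_Dex:bij} supplies bijections $\bijDfix\colon\DiffLxOFix[\Ustruct,\Vstruct]\to\DiffRz\cap\Vtrmap\,\DiffRz\,\Utrmap^{-1}$ and $\bijDex\colon\DiffLxOEx[\Ustruct,\Vstruct]\to\DiffRz\cap\Vtrmap\,\DiffRz\,\Utrmap$, valid for this and indeed for any choice of minimal atlases. So the first step is simply to record that $\DiffLxOFix[\Ustruct,\Vstruct]\ne\varnothing$ iff $\DiffRz\cap\Vtrmap\,\DiffRz\,\Utrmap^{-1}\ne\varnothing$, and $\DiffLxOEx[\Ustruct,\Vstruct]\ne\varnothing$ iff $\DiffRz\cap\Vtrmap\,\DiffRz\,\Utrmap\ne\varnothing$.

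The second step is to translate these non-emptiness conditions into double-coset language via Lemma~\ref{lm:CHD_partition} applied with $\Cgrp=\Dgrp=\DiffRz$. For part~\ref{enum:lm:DiffL_AB_fix:exists} I would take $\gel=\Vtrmap$ and $\hel=\Utrmap$: then $\DiffRz\cap\Vtrmap\,\DiffRz\,\Utrmap^{-1}\ne\varnothing$ is equivalent to $\Vtrmap\in\DiffRz\,\Utrmap\,\DiffRz$, i.e.\ to $\Vtrmap=\bdif\circ\Utrmap\circ\adif^{-1}$ for some $\adif,\bdif\in\DiffRz$, which is precisely condition~\ref{enum:lm:DiffL_AB_fix:exists:some_ga_gb_equiv}. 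For part~\ref{enum:lm:DiffL_AB_ex:exists} I would take $\gel=\Vtrmap$ and $\hel=\Utrmap^{-1}$, so that $\DiffRz\cap\Vtrmap\,\DiffRz\,\Utrmap\ne\varnothing$ becomes $\Vtrmap\in\DiffRz\,\Utrmap^{-1}\,\DiffRz$; passing to inverses (and using that $\DiffRz$ is a group) rewrites this as $\Vtrmap^{-1}=\bdif\circ\Utrmap\circ\adif^{-1}$, i.e.\ condition~\ref{enum:lm:DiffL_AB_ex:exists:some_ga_gb_equiv}. The only point needing a little care is keeping the sides of the compositions and the inversions straight; the rest is bookkeeping.

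With these two steps in place, the equivalences \ref{enum:lm:DiffL_AB_fix:exists:diff}$\Leftrightarrow$\ref{enum:lm:DiffL_AB_fix:exists:some_ga_gb_equiv}$\Leftrightarrow$\ref{enum:lm:DiffL_AB_fix:exists:any_ga_gb_equiv} close as a short cycle: since the displayed equivalences hold for \emph{every} pair of minimal atlases, non-emptiness of $\DiffLxOFix[\Ustruct,\Vstruct]$ forces the double-coset relation for all pairs (giving \ref{enum:lm:DiffL_AB_fix:exists:diff}$\Rightarrow$\ref{enum:lm:DiffL_AB_fix:exists:any_ga_gb_equiv}); \ref{enum:lm:DiffL_AB_fix:exists:any_ga_gb_equiv}$\Rightarrow$\ref{enum:lm:DiffL_AB_fix:exists:some_ga_gb_equiv} is immediate from Lemma~\ref{lm:minimal_atlas_exist}; and \ref{enum:lm:DiffL_AB_fix:exists:some_ga_gb_equiv}$\Rightarrow$\ref{enum:lm:DiffL_AB_fix:exists:diff} holds because $\Vtrmap=\bdif\circ\Utrmap\circ\adif^{-1}$ displays $\bdif=\Vtrmap\circ\adif\circ\Utrmap^{-1}$ as an element of $\DiffRz\cap\Vtrmap\,\DiffRz\,\Utrmap^{-1}$. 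Part~\ref{enum:lm:DiffL_AB_ex:exists} runs identically. Finally, part~\ref{enum:lm:DiffL_AB_all:exists} follows by combining the first two: $\DiffLx[\Ustruct,\Vstruct]{}=\DiffLxOFix[\Ustruct,\Vstruct]\sqcup\DiffLxOEx[\Ustruct,\Vstruct]$ is non-empty iff one of the summands is, and the disjunction of the two double-coset conditions is by definition the statement that $\Utrmap$ and $\Vtrmap$ lie in a common $(\DiffRz,\pm)$-double coset. I expect no genuine obstacle: the substance is entirely carried by Theorem~\ref{th:Dfix_Dex}, and what remains is to push its bijections through Lemma~\ref{lm:CHD_partition}.
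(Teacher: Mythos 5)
Your proposal is correct and takes essentially the same route as the paper: both reduce everything to the bijections of Theorem~\ref{th:Dfix_Dex} applied to an \emph{arbitrary} pair of minimal atlases (whose existence is Lemma~\ref{lm:minimal_atlas_exist}), so that non-emptiness of $\DiffLxOFix[\Ustruct,\Vstruct]$ (resp.\ $\DiffLxOEx[\Ustruct,\Vstruct]$) becomes non-emptiness of $\DiffRz\cap\Vtrmap\,\DiffRz\,\Utrmap^{\mp1}$, which is the stated double-coset condition. Your explicit appeal to Lemma~\ref{lm:CHD_partition} just formalizes the one-line algebraic rewriting that the paper carries out by hand.
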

\begin{proof}
\ref{enum:lm:DiffL_AB_fix:exists}
The implication~\ref{enum:lm:DiffL_AB_fix:exists:any_ga_gb_equiv}$\Rightarrow$\ref{enum:lm:DiffL_AB_fix:exists:some_ga_gb_equiv} is trivial.

\ref{enum:lm:DiffL_AB_fix:exists:diff}$\Rightarrow$\ref{enum:lm:DiffL_AB_fix:exists:any_ga_gb_equiv}.
Suppose $\DiffLxOFix[\Ustruct,\Vstruct]\ne\varnothing$ and let $\UAtlas\in\Ustruct$ and $\VAtlas\in\Vstruct$ be any minimal atlases.
Then by Theorem~\ref{th:Dfix_Dex}, we have a bijection
$$\bijDfix\colon\DiffLxOFix[\Ustruct,\Vstruct] \equiv \DiffRz \, \cap \, \Vtrmap\,\DiffRz\,\Utrmap^{-1}.$$
Hence, the latter intersection is non-empty, i.e.\ $\bdif=\Vtrmap\circ \adif \circ \Utrmap^{-1}$ for some $\adif,\bdif\in\DiffRz$, which is the same as $\Vtrmap = \bdif \circ\Utrmap\circ \adif^{-1}$.

\ref{enum:lm:DiffL_AB_fix:exists:some_ga_gb_equiv}$\Rightarrow$\ref{enum:lm:DiffL_AB_fix:exists:diff}
Suppose there exist some minimal atlases $\UAtlas\in\Ustruct$ and $\VAtlas\in\Vstruct$ such that $\Vtrmap = \bdif \circ\Utrmap\circ \adif^{-1}$ for some $\adif,\bdif\in\DiffRz$.
Then
$$\bdif=\Vtrmap\circ \adif \circ \Utrmap^{-1} \in \DiffRz \, \cap \, \Vtrmap\,\DiffRz\,\Utrmap^{-1} \ne\varnothing.$$
Hence, by Theorem~\ref{th:Dfix_Dex}, $\DiffLxOFix[\Ustruct,\Vstruct]\ne\varnothing$ as well.

The proof of~\ref{enum:lm:DiffL_AB_ex:exists} is similar, and~\ref{enum:lm:DiffL_AB_all:exists} is just a combination of~\ref{enum:lm:DiffL_AB_fix:exists} and~\ref{enum:lm:DiffL_AB_ex:exists}.
\end{proof}

\subsection{Classification of $\Ck$-structures on $\DLine$}
Let $\CkStructs{\DLine}$ be the set of all $\Ck$-structures on $\DLine$.
Then the group $\HomeoL$ of homeomorphisms of $\DLine$ acts on $\CkStructs{\DLine}$, as described in Lemma~\ref{lm:act_homeo_on_atlases}\ref{enum:HM_acts_on_smooth_struct}.
For every $\Ustruct\in\CkStructs{\DLine}$ let
\begin{itemize}[itemsep=1ex]
\item $[\Ustruct]$ be the orbit of $\Ustruct$ with respect to that action of $\HomeoL$;
\item $[\Ustruct]^{\mathrm{fix}}$ be the orbit of $\Ustruct$ with respect to the induced action of the subgroup $\HomeoLxOFix$ consisting of homeomorphisms fixing $\pta$ and $\ptb$.
\end{itemize}
Thus $[\Ustruct]$ consist of all $\Ck$-structures on $\DLine$ being $\Ck$-diffeomorphic to $\Ustruct$, while $[\Ustruct]^{\mathrm{fix}}$ is a subset of $[\Ustruct]$ consisting of $\Ck$-structures being $\Ck$-diffeomorphic to $\Ustruct$ via a diffeomorphism fixing $\pta$ and $\ptb$.

Denote by $\IsoClassesL$ and $\IsoClassesLFix$ the corresponding orbits spaces.
As a consequence we get the following theorem clarifying the bijections mentioned in Theorem~\ref{th:Ck-struct_on_L_simple}.
\begin{subtheorem}\label{th:Ck-struct_on_L_detailed}
There are well-defined \term{bijections}
\begin{itemize}[itemsep=1ex]
\item
$\sigma\colon
    \IsoClassesL
    \ \longrightarrow \
    \dbli{\DiffRz}{\aHRzp}$,
\[
    \sigma([\Ustruct]) = \DiffRz\,\Utrmap^{\pm1}\,\DiffRz,
    \ \text{where $\UAtlas\in\Ustruct$ is any minimal atlas},
\]
\item
$\sigma^{\mathrm{fix}}\colon
    \IsoClassesLFix
    \ \longrightarrow \
    \dbl{\DiffRz}{\aHRzp}{\DiffRz}$,
\[    \sigma^{\mathrm{fix}}([\Ustruct]^{\mathrm{fix}}) = \DiffRz\,\Utrmap\,\DiffRz,
    \ \text{where $\UAtlas\in\Ustruct$ is any minimal atlas},
\]
\end{itemize}
associating to the isomorphism classes $[\Ustruct]$ and $[\Ustruct]^{\mathrm{fix}}$ of a $\Ck$-structure $\Ustruct$ on $\DLine$ respectively $\DiffRz$- and $(\DiffRz,\pm)$-double cosets in $\aHRzp$ of the $0$-extension $\Utrmap$ of the transition map of \term{any} minimal atlas $\UAtlas\in\Ustruct$.
\end{subtheorem}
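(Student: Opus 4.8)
The plan is to read off both bijections directly from Corollary~\ref{cor:DiffAB_expl} (itself a consequence of Theorem~\ref{th:Dfix_Dex}), which asserts that $\DiffLxOFix[\Ustruct,\Vstruct]\ne\varnothing$ precisely when the $0$-extensions of the transition maps of \emph{any} (equivalently, \emph{some}) minimal atlases of $\Ustruct$ and $\Vstruct$ lie in one $\DiffRz$-double coset of $\aHRzp$, and likewise $\DiffLx[\Ustruct,\Vstruct]{}\ne\varnothing$ precisely when these $0$-extensions lie in one $(\DiffRz,\pm)$-double coset. I would establish the statement for $\sigma^{\mathrm{fix}}$ in detail and then obtain the one for $\sigma$ verbatim, replacing $\HomeoLxOFix$ by $\HomeoL$, $\DiffLxOFix[\cdot,\cdot]$ by $\DiffLx[\cdot,\cdot]{}$, and ``$\DiffRz$-double coset'' by ``$(\DiffRz,\pm)$-double coset'' throughout (so the appeal to the ``$\mathrm{fix}$''-part of Corollary~\ref{cor:DiffAB_expl} becomes an appeal to its ``$(\DiffRz,\pm)$''-part). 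Since by Lemma~\ref{lm:minimal_atlas_exist} every $\Ck$-structure admits a minimal atlas, the formulas for $\sigma^{\mathrm{fix}}$ and $\sigma$ are at least meaningful once independence of the choices is checked.

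First I would verify that $\sigma^{\mathrm{fix}}$ is well defined. If $\UAtlas,\UAtlas'\in\Ustruct$ are two minimal atlases of the same structure, then $\id_{\DLine}$ is a $\Ck$-diffeomorphism of $(\DLine,\Ustruct)$ onto itself by Lemma~\ref{lm:char_smooth_structures}, so $\DiffLxOFix[\Ustruct,\Ustruct]\ne\varnothing$, and Corollary~\ref{cor:DiffAB_expl} then forces the $0$-extensions of the transition maps of $\UAtlas$ and $\UAtlas'$ into the same $\DiffRz$-double coset; hence $\DiffRz\,\Utrmap\,\DiffRz$ does not depend on the minimal atlas chosen in $\Ustruct$. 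Next, $\Vstruct\in[\Ustruct]^{\mathrm{fix}}$ means by definition $\DiffLxOFix[\Ustruct,\Vstruct]\ne\varnothing$, so again by Corollary~\ref{cor:DiffAB_expl} the $0$-extensions $\Utrmap,\Vtrmap$ of the transition maps of arbitrary minimal atlases $\UAtlas\in\Ustruct$, $\VAtlas\in\Vstruct$ lie in one $\DiffRz$-double coset; thus $\sigma^{\mathrm{fix}}$ is constant on each orbit and is a genuine map $\IsoClassesLFix\to\dbl{\DiffRz}{\aHRzp}{\DiffRz}$. Reading this equivalence in the opposite direction yields injectivity: if $\DiffRz\,\Utrmap\,\DiffRz=\DiffRz\,\Vtrmap\,\DiffRz$ then $\DiffLxOFix[\Ustruct,\Vstruct]\ne\varnothing$, whence $[\Ustruct]^{\mathrm{fix}}=[\Vstruct]^{\mathrm{fix}}$.

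For surjectivity I would invoke the special minimal atlas: given any $h\in\aHRzp$, the atlas $\MinAtlas{h}=\{(\Na,\idU),(\Nb,h\circ\idV)\}$ is $\Ck$ with transition map $\restr{h}{\Rzp}$, so the $0$-extension of its transition map is $h$ itself. Letting $\Wstruct$ denote the $\Ck$-structure it represents, $\sigma^{\mathrm{fix}}\bigl([\Wstruct]^{\mathrm{fix}}\bigr)=\DiffRz\,h\,\DiffRz$, so every $\DiffRz$-double coset is attained; similarly $\sigma\bigl([\Wstruct]\bigr)=\DiffRz\,h^{\pm1}\,\DiffRz$ realizes every $(\DiffRz,\pm)$-double coset. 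Combining well-definedness, injectivity and surjectivity proves the claim for $\sigma^{\mathrm{fix}}$, and the translation indicated in the first paragraph proves it for $\sigma$.

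Given Theorem~\ref{th:Dfix_Dex} and Corollary~\ref{cor:DiffAB_expl}, the argument is essentially formal; the two points that genuinely need attention are the independence of the assignment on the minimal atlas chosen \emph{inside a fixed structure} — which is not automatic and rests on the observation that $\id_{\DLine}$, being a $\Ck$-diffeomorphism of $(\DLine,\Ustruct)$ onto itself, already forces $\DiffRz$-double-coset equivalence of the $0$-extensions of the two transition maps — and the surjectivity, which is exactly what the special minimal atlas construction is designed to deliver. (Orientability of the minimal atlas, supplied by Lemma~\ref{lm:minimal_atlas_exist}, is not actually used here, since Corollary~\ref{cor:DiffAB_expl} applies to arbitrary minimal atlases.)
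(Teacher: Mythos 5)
Your proposal is correct and follows essentially the same route as the paper: the paper's own proof simply observes that the theorem is a rephrasing of the equivalences of Corollary~\ref{cor:DiffAB_expl}, applied to a single structure (independence of the chosen minimal atlas) and to two structures (orbit-invariance and injectivity), exactly as you do. You are in fact slightly more explicit than the paper, since you also spell out surjectivity via the special minimal atlas $\MinAtlas{h}$, a point the paper's proof leaves implicit (that construction is only developed in the following subsection).
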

\begin{proof}
It is just a rephrasing of Corollary~\ref{cor:DiffAB_expl}.
Namely, the statement that $\sigma$ is a bijection means that two $\Ck$-structures $\Ustruct$ and $\Vstruct$ on $\DLine$ are $\Ck$-diffeomorphic iff for any pair of minimal atlases $\UAtlas\in\Ustruct$ and $\VAtlas\in\Vstruct$ the $0$-extensions $\Utrmap$ and $\Vtrmap$ of their respective transition maps belong to the same $(\DiffRz,\pm)$-double coset in $\aHRzp$.
This is exactly the equivalence of~\ref{enum:lm:DiffL_AB_all:exists:diff} and~\ref{enum:lm:DiffL_AB_all:exists:any_ga_gb_equiv} of Corollary~\ref{cor:DiffAB_expl}.

Similarly, existence of the bijection $\sigma^{\mathrm{fix}}$ is the same as the equivalence~\ref{enum:lm:DiffL_AB_fix:exists:diff}$\Leftrightarrow$\ref{enum:lm:DiffL_AB_fix:exists:any_ga_gb_equiv}.
\end{proof}

\subsection{Special minimal atlases}\label{sect:spec_minimal_atlas}
Our next aim is to provide examples of $\Ck$-structures $\Ustruct$ and $\Vstruct$ on $\DLine$ for which either of sets $\DiffLxOFix[\Ustruct,\Vstruct]$ and $\DiffLxOEx[\Ustruct,\Vstruct]$ can be empty or non-empty.
For that reason it will be convenient to work only with $\Ck$-structures admitting \term{special} minimal atlases defined as follows.

Recall that $\Uset=\bR$ and $\Vset=(\bR\setminus\{\pta\})\cup\{\ptb\}$ by the construction of $\DLine$.
Hence, we have the following homeomorphisms: the identity map $\idU\colon\Uset\to\bR$, and
\[
    \idV\colon\Vman\to\bR,
    \qquad
    \idV(\px)=
    \begin{cases}
    0,             & \px=\ptb, \\
    \px=\idU(\px), & \px\in\Vset\setminus\ptb.
    \end{cases}
\]
In particular, $\idU=\idV$ on $\Uset\setminus\pta=\Vset\setminus\ptb$.

Then for each $\dif\in\aHRzp$ one can define the following minimal atlas
\[
    \MinAtlas{\dif}:=\{ (\Uset,\idU), (\Vset, \dif\circ\idV) \}
\]
whose transition map is
\[
    \restr{\dif\circ\idV\circ\idU^{-1}}{\Rzp} = \restr{\dif}{\Rzp}.
\]
Hence, its $0$-extension is $\dif$, i.e.\ $\trmap{\MinAtlas{\dif}} = \dif$.

The atlas $\MinAtlas{\dif}$ will be called the \term{special minimal atlas of $\dif\in\aHRzp$}.
Denote by $\trStruct{\dif}$ the corresponding $\Ck$-structure on $\DLine$ defined by the atlas $\MinAtlas{\dif}$, so it is determined by $\dif$.
The following lemma collects simple facts about special atlases.
\begin{sublemma}\label{lm:spec_minimal_atlas}
\begin{enumerate}[leftmargin=*, label={\rm(\arabic*)}, itemsep=1ex]
\item\label{enum:spec_minimal_atlas:minimal_umap_id__spec}
Let $\UAtlas=\{ (\Uset,\Umap), (\Vset, \Vmap) \}$ be a minimal atlas on $\DLine$ such that $\Umap=\idU$.
Then $\UAtlas$ is a special minimal atlas corresponding to the homeomorphism $\Vmap\circ\idV^{-1} \in \aHRzp$, i.e.\ $\UAtlas = \MinAtlas{\Vmap\circ\idV^{-1}}$;

\item\label{enum:spec_minimal_atlas:characterization}
For a $\Ck$-structure $\Ustruct$ on $\DLine$ the following conditions are equivalent:
\begin{enumerate}[label={\rm(\alph*)}, itemsep=1ex]
\item\label{enum:exist_spec_red_atl:exist} $\Ustruct$ has a special minimal $\Ck$-atlas;
\item\label{enum:exist_spec_red_atl:umap_diff}
there exists a minimal atlas $\UAtlas=\{ (\Uset,\Umap), (\Vset, \Vmap) \} \in \Ustruct$ such that $\Umap\colon\Uset=\bR \to \bR$ is a $\Ck$-diffeomorphism;
\item\label{enum:exist_spec_red_atl:any_umap_diff}
for every minimal atlas $\UAtlas=\{ (\Uset,\Umap), (\Vset, \Vmap) \} \in \Ustruct$ we have that $\Umap\colon\Uset=\bR \to \bR$ is a $\Ck$-diffeomorphism;
\end{enumerate}

\item\label{enum:spec_minimal_atlas:diff}
Let $\Ustruct$ be a $\Ck$-structure on $\DLine$, $\UAtlas=\{ (\Uset,\Umap), (\Vset, \Vmap) \} \in \Ustruct$ some minimal atlas, and $\Utrmap$ be the $0$-extension of its transition map $\Vmap\circ\Umap^{-1}$;
Then the $\Ck$-structures $\Ustruct$ and $\trStruct{\Utrmap}$ are diffeomorphic via a preserving orientation diffeomorphism $\dif$ fixing $\pta$ and $\ptb$, i.e.\ $\DiffLxOrPresxOFix[\Ustruct, \trStruct{\Utrmap}]\ne\varnothing$;
For instance, if $\Umap$ preserves orientation, then the following map $\dif\colon\DLine\to\DLine$ defined by
\[
    \restr{\dif}{\Uset} = \idU^{-1}\circ\Umap\colon\Uset\to\Uset
    \qquad \text{and} \qquad \dif(\ptb)=\ptb
\]
belongs to $\DiffLxOrPresxOFix[\Ustruct, \trStruct{\Utrmap}]$;

\item\label{enum:lm:spec_reduc_atlas:compose_with_diff}
Let $\gdif,\dif\in\DiffRz$.
Then $\trStruct{\dif} = \trStruct{\gdif}$ if and only if $\gdif\circ\dif^{-1}\in\DiffRz$.
In particular, there is a bijection between $\Ck$-structures on $\DLine$ admitting special minimal atlas and \term{right} cosets $\DiffRz\setminus \aHRzp$;

\item\label{enum:lm:spec_reduc_atlas:DPlab_DMinainvb}
For each $\gdif,\dif\in\aHRzp$ there are bijections:
\[
    \DiffLxOFix[\trStruct{\gdif},\trStruct{\dif}]
    \xrightarrow{~ \bijDL[\MinAtlas{\gdif},\MinAtlas{\dif}]{} ~}
    \DiffRz \ \cap \ \dif\,\DiffRz\,\gdif^{-1}
    \xleftarrow{~ \bijDL[\MinAtlas{\gdif^{-1}},\MinAtlas{\dif}]{} ~}
    \DiffLxOEx[\trStruct{\gdif^{-1}},\trStruct{\dif}];
\]

\item\label{enum:lm:spec_reduc_atlas:non_diff_structs}
If $\bdif\in\aHRzp$ but $\bdif\not\in\DiffRz$, then the $\Ck$-structures $\trStruct{\id_{\bR}}$ and $\trStruct{\bdif}$ are not $\Ck$-diffeomorphic, i.e.\ $\DiffLx[\trStruct{\id_{\bR}},\trStruct{\bdif}]{} = \varnothing$.
\end{enumerate}
\end{sublemma}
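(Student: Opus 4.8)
The plan is to unwind the definitions so that every item reduces either to Lemma~\ref{lm:char_smooth_structures} or to the dictionary of Theorem~\ref{th:Dfix_Dex}; no new ideas are required.

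\textbf{Items \ref{enum:spec_minimal_atlas:minimal_umap_id__spec} and \ref{enum:spec_minimal_atlas:characterization}.} For~\ref{enum:spec_minimal_atlas:minimal_umap_id__spec}, given a minimal atlas $\UAtlas=\{(\Uset,\idU),(\Vset,\Vmap)\}$ one checks that $\dif:=\Vmap\circ\idV^{-1}$ lies in $\aHRzp$: it is a self-homeomorphism of $\bR$ with $\dif(0)=\Vmap(\ptb)=0$, and under the canonical identification $\Vset\setminus\ptb=\Rzp$ its restriction to $\Rzp$ equals the transition map $\Vmap\circ\idU^{-1}$ of $\UAtlas$, hence a $\Ck$-diffeomorphism of $\Rzp$; then $\Vmap=\dif\circ\idV$, so $\UAtlas=\MinAtlas{\dif}$. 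For~\ref{enum:spec_minimal_atlas:characterization} I would run the cycle \ref{enum:exist_spec_red_atl:exist}$\Rightarrow$\ref{enum:exist_spec_red_atl:umap_diff}$\Rightarrow$\ref{enum:exist_spec_red_atl:any_umap_diff}$\Rightarrow$\ref{enum:exist_spec_red_atl:exist}. The first is immediate, taking $\UAtlas=\MinAtlas{\dif}$, whose first chart $\idU$ is a $\Ck$-diffeomorphism. For the second, any minimal atlas $\{(\Uset,\Umap'),(\Vset,\Vmap')\}\in\Ustruct$ is $\Ck$-compatible with the atlas supplied by~\ref{enum:exist_spec_red_atl:umap_diff}, so the transition $\Umap'\circ\Umap^{-1}\colon\bR\to\bR$ between the two first charts is a $\Ck$-diffeomorphism and $\Umap'=(\Umap'\circ\Umap^{-1})\circ\Umap\in\DiffRz$. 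For the third, pick any minimal atlas $\{(\Uset,\Umap),(\Vset,\Vmap)\}\in\Ustruct$ (one exists by Lemma~\ref{lm:minimal_atlas_exist}), note that $(\Uset,\idU)$ is $\Ck$-compatible with $(\Uset,\Umap)$ (transition $\Umap\in\DiffRz$) and with $(\Vset,\Vmap)$ (transition $\restr{\Vmap}{\Rzp}=(\Vmap\circ\Umap^{-1})\circ\restr{\Umap}{\Rzp}$, a composite of $\Ck$-diffeomorphisms of $\Rzp$), so $\{(\Uset,\idU),(\Vset,\Vmap)\}$ is a minimal atlas of $\Ustruct$, special by~\ref{enum:spec_minimal_atlas:minimal_umap_id__spec}.

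\textbf{Item \ref{enum:spec_minimal_atlas:diff}.} The content is the explicit $\dif$, which I treat assuming first that $\Umap$ preserves orientation; here $\restr{\dif}{\Uset}=\idU^{-1}\circ\Umap$ is just $\Umap$ (as $\idU=\id_{\bR}$) and $\dif(\ptb)=\ptb$. Then $\dif$ is a homeomorphism of $\DLine$ fixing $\pta,\ptb$: continuity at $\ptb$ holds since $\dif^{-1}\bigl((\Wset\setminus\pta)\cup\ptb\bigr)=(\Umap^{-1}(\Wset)\setminus\pta)\cup\ptb$ for a basic neighbourhood of $\ptb$, and $\dif$ preserves orientation because $\restr{\dif}{\Rzp}=\restr{\Umap}{\Rzp}$ is increasing. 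It is a $\Ck$-diffeomorphism from $\Ustruct$ to $\trStruct{\Utrmap}$, since its local presentation in the charts $(\Uset,\Umap)$ and $(\Uset,\idU)$ is $\idU\circ\Umap\circ\Umap^{-1}=\id_{\bR}$, while in $(\Vset,\Vmap)$ and $(\Vset,\Utrmap\circ\idV)$ it is the self-homeomorphism of $\bR$ fixing $0$ and equal on $\Rzp$ to $\Utrmap\circ\Umap\circ\Vmap^{-1}=(\Vmap\circ\Umap^{-1})\circ\Umap\circ\Vmap^{-1}=\id_{\Rzp}$ (using $\restr{\Utrmap}{\Rzp}=\Vmap\circ\Umap^{-1}$), hence $\id_{\bR}$ again; both presentations, and those of $\dif^{-1}$, are $\Ck$. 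For an arbitrary minimal atlas one first replaces it, using Lemma~\ref{lm:minimal_atlas_exist} and if necessary negating \emph{both} charts of an orientable minimal atlas (which yields another minimal atlas of $\Ustruct$), by a minimal atlas of $\Ustruct$ whose first chart preserves orientation, and applies the case just settled.

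\textbf{Items \ref{enum:lm:spec_reduc_atlas:compose_with_diff}, \ref{enum:lm:spec_reduc_atlas:DPlab_DMinainvb}, \ref{enum:lm:spec_reduc_atlas:non_diff_structs}.} By Lemma~\ref{lm:char_smooth_structures}, $\trStruct{\dif}=\trStruct{\gdif}$ iff $\MinAtlas{\dif}$ and $\MinAtlas{\gdif}$ are $\Ck$-compatible; of the cross-transitions between them only the one between the second charts is not automatically $\Ck$, and it equals $\gdif\circ\dif^{-1}\colon\bR\to\bR$, so the condition is exactly $\gdif\circ\dif^{-1}\in\DiffRz$, equivalently $\DiffRz\dif=\DiffRz\gdif$; together with the surjectivity of $\dif\mapsto\trStruct{\dif}$ onto structures admitting a special minimal atlas, this gives~\ref{enum:lm:spec_reduc_atlas:compose_with_diff}. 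Item~\ref{enum:lm:spec_reduc_atlas:DPlab_DMinainvb} is Theorem~\ref{th:Dfix_Dex}\ref{enum:th:Dfix_Dex:bij} applied to the pairs of special minimal atlases $(\MinAtlas{\gdif},\MinAtlas{\dif})$ and $(\MinAtlas{\gdif^{-1}},\MinAtlas{\dif})$, whose $0$-extended transition maps are $(\gdif,\dif)$ and $(\gdif^{-1},\dif)$, so that both bijections take values in $\DiffRz\cap\dif\DiffRz\gdif^{-1}$. For~\ref{enum:lm:spec_reduc_atlas:non_diff_structs}, taking $\gdif=\id_{\bR}$ in~\ref{enum:lm:spec_reduc_atlas:DPlab_DMinainvb} identifies both $\DiffLxOFix[\trStruct{\id_{\bR}},\trStruct{\bdif}]$ and $\DiffLxOEx[\trStruct{\id_{\bR}},\trStruct{\bdif}]$ with $\DiffRz\cap\bdif\DiffRz$, which is empty since $\bdif\DiffRz$ is a left $\DiffRz$-coset distinct from $\DiffRz$ (because $\bdif\notin\DiffRz$); hence $\DiffLx[\trStruct{\id_{\bR}},\trStruct{\bdif}]{}=\varnothing$. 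The only genuinely delicate point is the orientation bookkeeping in~\ref{enum:spec_minimal_atlas:diff}: the natural identification of $\Ustruct$ with $\trStruct{\Utrmap}$ has $\restr{\dif}{\Rzp}=\restr{\Umap}{\Rzp}$, which reverses orientation exactly when $\Umap$ does, and (compare $\trStruct{\stlin{a}}$, $\trStruct{\stlin{\inv{a}}}$ and Theorem~\ref{th:ck_structs_examples}\ref{enum:th:ck_structs_examples:a_ainv}) this cannot be repaired for an arbitrary prescribed minimal atlas, so the reduction to an orientation-preserving first chart above is essential; the rest is mechanical.
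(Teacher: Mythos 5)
Your proposal is correct, and on items \ref{enum:spec_minimal_atlas:minimal_umap_id__spec}, \ref{enum:spec_minimal_atlas:characterization}, \ref{enum:lm:spec_reduc_atlas:compose_with_diff}, \ref{enum:lm:spec_reduc_atlas:DPlab_DMinainvb} and \ref{enum:lm:spec_reduc_atlas:non_diff_structs} it follows essentially the paper's own route: everything reduces to $\Ck$-compatibility of charts and to the bijections of Theorem~\ref{th:Dfix_Dex} (your cycle \ref{enum:exist_spec_red_atl:exist}$\Rightarrow$\ref{enum:exist_spec_red_atl:umap_diff}$\Rightarrow$\ref{enum:exist_spec_red_atl:any_umap_diff}$\Rightarrow$\ref{enum:exist_spec_red_atl:exist} in item \ref{enum:spec_minimal_atlas:characterization} is an immaterial reshuffle of the paper's implications).

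The one real divergence is item \ref{enum:spec_minimal_atlas:diff}, where you are more careful than the printed proof. The paper merely notes that $\id_{\bR}\in\DiffRz\cap\Utrmap\,\DiffRz\,\Utrmap^{-1}$, concludes $\DiffLxOFix[\Ustruct,\trStruct{\Utrmap}]\ne\varnothing$, and never returns to the words ``preserving orientation''; the element corresponding to $\id_{\bR}$ under $\bijDfix$ is exactly the explicit $\dif$ with $\restr{\dif}{\Uset}=\idU^{-1}\circ\Umap$, which preserves orientation only when $\Umap$ does. You verify this map directly and, crucially, you observe that for a minimal atlas whose first chart reverses orientation the claim $\DiffLxOrPresxOFix[\Ustruct,\trStruct{\Utrmap}]\ne\varnothing$ can actually fail. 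Your suspicion is right: for $\ax\ne1$ the minimal atlas $\{(\Na,-\idU),(\Nb,-\stlin{\ax}\circ\idV)\}$ belongs to $\trStruct{\stlin{\ax}}=\trStruct{\ax}$, the $0$-extension of its transition map is $\Utrmap(\px)=-\stlin{\ax}(-\px)=\ax\cdot\stlin{\inv{\ax}}(\px)$, so $\trStruct{\Utrmap}=\trStruct{\stlin{\inv{\ax}}}=\trStruct{\inv{\ax}}$ by item \ref{enum:lm:spec_reduc_atlas:compose_with_diff}, while $\DiffLxOrPresxOFix[\trStruct{\ax},\trStruct{\inv{\ax}}]=\varnothing$ by Theorem~\ref{th:ck_structs_examples}\ref{enum:th:ck_structs_examples:a_ainv}. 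So the first sentence of item \ref{enum:spec_minimal_atlas:diff} fails as literally stated for such atlases, and what you prove --- the explicit $\dif$ works whenever $\Umap$ preserves orientation, and every structure admits a minimal atlas of that kind --- is the correct (and, for the rest of the paper, sufficient) version. Just be explicit that your ``replace the atlas'' step changes $\Utrmap$, so it establishes the conclusion for the new transition map rather than for the originally prescribed one; that is forced by the counterexample, not a defect of your argument.
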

\begin{proof}
Statements~\ref{enum:spec_minimal_atlas:minimal_umap_id__spec} is trivial, while~\ref{enum:lm:spec_reduc_atlas:DPlab_DMinainvb} follows from Theorem~\ref{th:Dfix_Dex}.

\ref{enum:spec_minimal_atlas:characterization}
The implication \ref{enum:exist_spec_red_atl:any_umap_diff}$\Rightarrow$\ref{enum:exist_spec_red_atl:umap_diff} is also trivial.

\ref{enum:exist_spec_red_atl:umap_diff}$\Rightarrow$\ref{enum:exist_spec_red_atl:exist}
Let $\UAtlas=\{ (\Uset,\Umap), (\Vset, \Vmap) \} \in \Ustruct$ be a $\Ck$-atlas such that $\Umap\colon\Uset=\bR \to \bR$ is a $\Ck$-diffeomorphism.
Then $\{ (\Uset,\Umap^{-1}\circ\Umap = \idU), (\Vset, \Vmap) \}$ is a special minimal $\Ck$-atlas $\Ck$-compatible with $\UAtlas$, and therefore belonging to $\Vstruct$.

\ref{enum:exist_spec_red_atl:exist}$\Rightarrow$\ref{enum:exist_spec_red_atl:any_umap_diff}
Let $\MinAtlas{\adif}=\{ (\Uset,\idU), (\Vset, \adif\circ\idV) \} \in \Ustruct$ be a special minimal atlas, and $\UAtlas=\{ (\Uset,\Umap), (\Vset, \Vmap) \}$ be any other minimal atlas in $\Ustruct$.
Then they must be $\Ck$-compatible, i.e.\ $\Umap\circ\idU^{-1} = \Umap$ is a $\Ck$-diffeomorphism.

\ref{enum:spec_minimal_atlas:diff}
By Theorem~\ref{th:Dfix_Dex} we have a bijection $\DiffLxOFix[\Ustruct,\trStruct{\Utrmap}] \equiv \DiffRz \cap \Utrmap \DiffRz \Utrmap^{-1}$.
The latter intersection contains $\id_{\bR}$, and therefore it is non-empty, whence $\Ustruct$ and $\trStruct{\Utrmap}$ are $\Ck$-diffeomorphic via a diffeomorphism fixing $\pta$ and $\ptb$.

\ref{enum:lm:spec_reduc_atlas:compose_with_diff}
Consider the corresponding special minimal atlases
\begin{align*}
    \MinAtlas{\gdif}  &:= \{ (\Uset,\idU), (\Vset, \gdif\circ\idV) \}, &
    \MinAtlas{\dif}   &:= \{ (\Uset,\idU), (\Vset, \dif\circ\idV)  \}.
\end{align*}
Since they have the same first chart $(\Uset,\idU)$, the $\Ck$-structures $\MinAtlas{\gdif}$ and $\MinAtlas{\dif}$ coincide if and only if their second charts are $\Ck$-compatible.
The latter means that the corresponding transition map $(\dif\circ\idV) \circ (\gdif\circ\idV)^{-1} = \dif\circ\gdif^{-1}\in\DiffRz$.

\ref{enum:lm:spec_reduc_atlas:non_diff_structs}
By Theorem~\ref{th:Dfix_Dex} we have the following bijections:
\begin{equation}\label{equ:DABpl_a_idR}
\begin{aligned}
    \DiffLxOFix[\trStruct{\id_{\bR}},\trStruct{\bdif}]
        &
        \stackrel{\bijDL[\MinAtlas{\id_{\bR}},\MinAtlas{\bdif}]{}}{\equiv}
    \DiffRz \cap \bdif\DiffRz\id_{\bR}^{-1}
        =
    \DiffRz \cap \bdif\DiffRz
        = \\
        &=
    \DiffRz \cap \bdif\DiffRz\id_{\bR}
        \stackrel{(\bijDL[\MinAtlas{\id_{\bR}^{-1}},\MinAtlas{\bdif}]{})^{-1}}{\equiv}
    \DiffLxOEx[\trStruct{\id_{\bR}},\trStruct{\bdif}].
\end{aligned}
\end{equation}
Since $\bdif\not\in\DiffRz$, the adjacent classes $\DiffRz$ and $\bdif\DiffRz$ do not intersect each other, so all the sets in~\eqref{equ:DABpl_a_idR} are empty.
Hence,
\[
    \DiffLx[\trStruct{\id_{\bR}},\trStruct{\bdif}]{}
        \ = \
        \DiffLxOFix[\trStruct{\id_{\bR}},\trStruct{\bdif}]
        \ \sqcup \
        \DiffLxOEx[\trStruct{\id_{\bR}},\trStruct{\bdif}]
        \ = \
        \varnothing.
\]
Therefore, there are no diffeomorphisms between $\trStruct{\id_{\bR}}$ and $\trStruct{\bdif}$ (neiter fixing $\pta$ and $\ptb$ nor exchanging them).
\end{proof}

Statement~\ref{enum:spec_minimal_atlas:diff} of Lemma~\ref{lm:spec_minimal_atlas} shows that in order to classify $\Ck$-structures on $\DLine$ up to a $\Ck$-diffeomorphism, it suffices to consider only $\Ck$-structures having special minimal atlases.
Also, due to statement~\ref{enum:lm:spec_reduc_atlas:non_diff_structs}, $\DLine$ admits at least two non-diffeomorphic $\Ck$-structures.

Our final step is to show that in fact, there are uncountably many pair-wise non-diffeomorphic $\Ck$-structures on $\DLine$.

Recall that for each $\ax>0$ we have defined a homeomorphism $\stlin{\ax}$ by formula~\eqref{equ:stlin_b}, so $\stlin{\ax}(\px)=\px$ for $\px\leq0$, and $\stlin{\ax}(\px)=\ax\px$ for $\px\geq0$.
The following simple statement allows to classify $\Ck$-structures $\trStruct{\stlin{\ax}}$ on $\DLine$ corresponding to special minimal atlases $\MinAtlas{\stlin{\ax}}$ of $\stlin{\ax}$ for all $\ax>0$.

\begin{sublemma}\label{lm:l_ab_homeo}
Let $\ax,\bx>0$, $\func\in\DiffRz$, $\tau_1=\func'(0)$, and
\[ \qfunc := \stlin{\bx} \circ \func \circ \stlin{\ax} \in\aHRzp.\]
\begin{enumerate}[label={\rm(\alph*)}, leftmargin=*, itemsep=1ex]
\item\label{enum:lm:l_ab_homeo:tau_pos}
Suppose $\tau_1>0$;
Then $\qfunc$ is $\Cr{1}$ if and only if $\ax=\inv{\bx}$.

\item\label{enum:lm:l_ab_homeo:tau_neg}
Let $\tau_1<0$.
Then $\qfunc$ is $\Cr{1}$ if and only if $\ax=\bx$;

\item\label{enum:lm:l_ab_homeo:cn}
Suppose $\qfunc$ is $\Cr{1}$ and let $n\in\bN$ be such that $2\leq n\leq k$.
Assume also that $\ax,\bx\ne1$.
Then $\qfunc$ is $\Cr{n}$ if and only if $\func''(0)=\cdots=\func^{(n)}(0)=0$.
\end{enumerate}
\end{sublemma}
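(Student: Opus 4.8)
The plan is to reduce the whole statement to a local computation of $\qfunc$ at the origin. Since $\qfunc\in\aHRzp$ by hypothesis, $\qfunc$ is already $\Ck$ (hence $\Cr n$) on $\Rzp$, and both $\stlin{\ax},\stlin{\bx}$ are linear, hence $\Cinfty$, on each of the open rays $(-\infty,0)$ and $(0,+\infty)$; therefore the only question is the regularity of $\qfunc$ at $0$, and ``$\qfunc$ is $\Cr n$'' means precisely ``$\qfunc$ is $\Cr n$ at $0$''. First I would record the elementary fact that $\func\in\DiffRz$, being a $\Ck$-diffeomorphism of $\bR$ fixing $0$, is a strictly monotone homeomorphism with $\tau_1=\func'(0)\ne0$, increasing when $\tau_1>0$ and decreasing when $\tau_1<0$; so $\func$ carries each of the two rays $(-\infty,0)$, $(0,+\infty)$ onto one of them with no sign changes. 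This sign bookkeeping is what lets us peel the composition $\stlin{\bx}\circ\func\circ\stlin{\ax}$.

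Next I would write down the resulting piecewise formula for $\qfunc$ near $0$. If $\tau_1>0$, then $\qfunc(\px)=\func(\px)$ for $\px\le0$ and $\qfunc(\px)=\bx\,\func(\ax\px)$ for $\px\ge0$; if $\tau_1<0$, then $\qfunc(\px)=\bx\,\func(\px)$ for $\px\le0$ and $\qfunc(\px)=\func(\ax\px)$ for $\px\ge0$. In each case the restriction of $\qfunc$ to the closed left (resp.\ right) half-neighbourhood of $0$ is $\Ck$ up to the endpoint $0$ (composition of $\Ck$ maps), and its one-sided derivatives of order $j$ at $0$ are explicit constant multiples of $\func^{(j)}(0)$: for $\tau_1>0$ one gets $\qfunc^{(j)}(0^-)=\func^{(j)}(0)$ and $\qfunc^{(j)}(0^+)=\bx\ax^{\,j}\func^{(j)}(0)$, and symmetrically for $\tau_1<0$.

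Then I would invoke the standard criterion: a function continuous near $0$ whose restrictions to the two closed half-neighbourhoods are $\Cr n$ up to $0$ is $\Cr n$ at $0$ if and only if its left and right derivatives at $0$ of every order $\le n$ coincide (proved by iterating the mean-value lemma ``if $g$ is continuous at $0$ and $g'$ has a limit there, then $g'(0)$ exists and equals that limit''). Applying this with $n=1$ to the formulas above gives, for $\tau_1>0$, the condition $\func'(0)=\ax\bx\,\func'(0)$, hence $\ax\bx=1$, i.e.\ $\ax=\inv{\bx}$ — which is part~\ref{enum:lm:l_ab_homeo:tau_pos}; and for $\tau_1<0$ the condition $\bx\func'(0)=\ax\func'(0)$, hence $\ax=\bx$ — which is part~\ref{enum:lm:l_ab_homeo:tau_neg}. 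For part~\ref{enum:lm:l_ab_homeo:cn} I assume $\qfunc\in\Cr1$ and $\ax,\bx\ne1$, so by the previous two parts we are in one of the two relations just found; matching the $j$-th one-sided derivatives for $2\le j\le n$ then reduces in both cases to an equation of the form $(c^{\,j-1}-1)\,\func^{(j)}(0)=0$ with $c\in\{\ax,\bx\}$, and since $0<c\ne1$ forces $c^{\,j-1}\ne1$ for every $j\ge2$, this is equivalent to $\func^{(j)}(0)=0$ for all $j=2,\dots,n$, as claimed.

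I do not anticipate a real obstacle; the only places needing care are (i) tracking the signs of $\func$ on the two rays when unwinding $\stlin{\bx}\circ\func\circ\stlin{\ax}$ — this is exactly where the monotonicity of $\func$ enters and where the cases $\tau_1>0$ and $\tau_1<0$ genuinely behave differently — and (ii) being scrupulous that ``$\qfunc\in\Cr n$'' is a condition at $0$ only, and that the correct reformulation is the agreement of the one-sided $n$-jets of $\qfunc$ at $0$. Everything past that is a routine derivative count.
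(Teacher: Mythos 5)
Your proposal is correct and follows essentially the same route as the paper: unwind $\stlin{\bx}\circ\func\circ\stlin{\ax}$ into the same two piecewise formulas according to the sign of $\tau_1$, then compare the one-sided data at $0$ (the paper phrases this via the Taylor expansion of $\func$ at $0$ rather than via the gluing criterion for one-sided derivatives, but the computation and the resulting conditions $\ax\bx=1$, $\ax=\bx$, and $(\ax^{i-1}-1)\tau_i=0$ are identical). No gaps.
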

\begin{proof}
Since $\func$ is of class $\Ck$, we have that $\func(\px) = \tau_1\px +  \tau_2\px^2 + \cdots + \tau_n \px^n + \gfunc(\px)$, where $\tau_i = \tfrac{1}{i!} \func^{(i)}(0)$, $i=1,\ldots,n$, and $\gfunc\colon\bR\to\bR$ is a $\Ck$-function such that $\lim\limits_{\px\to0}\bigl(\gfunc(\px)/\px^{n}\bigr)=0$.
It follows that if $\tau_1>0$, then
\begin{equation}\label{equ:q_dfb_tau_pos}
    \qfunc(\px) =
    \begin{cases}
    \func(\px) =
        \tau_1\px +
        \tau_2\px^2 +
        \cdots
        \tau_n\px^n +
        \gfunc(\px), & \px\leq0, \\[1mm]
   \bx \func(\ax\px) =
         \bx\ax\tau_1\px +
         \bx\ax^2\tau_2\px^2 +
         \cdots
         \bx\ax^n\tau_n\px^n +
         \bx\gfunc(\ax\px), & \px\geq0.
    \end{cases}
\end{equation}
On the other hand, if $\tau_1<0$, then
\begin{equation}\label{equ:q_dfb_tau_neg}
    \qfunc(\px) =
    \begin{cases}
    \bx\func(\px) =
        \bx\tau_1\px +
        \bx\tau_2\px^2 +
        \cdots
        \bx\tau_n\px^n +
        \bx\gfunc(\px), & \px\leq0, \\[1mm]
    \func(\ax\px) =
         \ax\tau_1\px +
         \ax^2\tau_2\px^2 +
         \cdots
         \ax^n\tau_n\px^n +
         \gfunc(\ax\px), & \px\geq0.
    \end{cases}
\end{equation}

Thus if $\tau_1>0$, then due to~\eqref{equ:q_dfb_tau_pos}, $\qdif$ is $\Cr{1}$ iff $\bx\ax=1$.
Moreover, in that case $\qdif$ is $\Cr{n}$ iff $\ax^{i-1}\tau_i=\tau_i$ for $i=2,\ldots,n$.
Since $\ax \ne 0$, the latter is possible iff $\tau_i=0$ for $i=2\ldots,n$.
This proves~\ref{enum:lm:l_ab_homeo:tau_pos} and a part of~\ref{enum:lm:l_ab_homeo:cn} for $\tau_1>0$.

Similarly, if $\tau_1<0$, then due to~\eqref{equ:q_dfb_tau_neg}, $\qdif$ is $\Cr{1}$ iff $\bx=\ax$.
Moreover, again that case $\qdif$ is $\Cr{n}$ iff $\ax^{i-1}\tau_i=\tau_i$ for $i=2,\ldots,n$, which is again equivalent to the assumption that $\tau_i=0$ for $i=2\ldots,n$.
This proves~\ref{enum:lm:l_ab_homeo:tau_neg} and completes~\ref{enum:lm:l_ab_homeo:cn} for $\tau_1<0$.
\end{proof}

\begin{subcorollary}\label{cor:D_cap_wb_D_wa}
For all $\ax,\bx \ne 1$ we have that
\begin{align*}
    \DiffRz \cap \stlin{\bx} \DiffRz \stlin{\ax} &=
    \begin{cases}
        \varnothing,     & \ax \ne \bx, \inv{\bx}, \\[1mm]
        \JDiffRzOrPres,  & \ax\bx = 1,             \\[1mm]
        \JDiffRzOrRev,   & \ax = \bx.
    \end{cases}
\end{align*}
\end{subcorollary}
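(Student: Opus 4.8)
The strategy is to reduce the question of whether $q\in\DiffRz$ to $C^k$-smoothness of $q$ at the origin, feed this into Lemma~\ref{lm:l_ab_homeo}, and then carry out one short explicit computation to see that the resulting collection is in fact an entire group.

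Fix $a,b\neq 1$. For $f\in\DiffRz$ put $q:=\stlin{b}\circ f\circ\stlin{a}\in\aHRzp$. Since $\stlin{a}$ and $\stlin{b}$ are linear, hence $C^k$ with non-vanishing derivative, on each of the half-lines $(-\infty;0)$ and $(0;+\infty)$, the restriction $\restr{q}{\Rzp}$ is already a $C^k$-diffeomorphism; moreover, by the explicit formulas~\eqref{equ:q_dfb_tau_pos}--\eqref{equ:q_dfb_tau_neg}, whenever $q$ is $C^1$ at $0$ its derivative there equals $f'(0)$ or $b\,f'(0)$, which is nonzero because $f\in\DiffRz$ forces $f'(0)\neq0$. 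Hence $q\in\DiffRz$ if and only if $q$ is $C^k$ at $0$ (the inverse is then automatically $C^k$ by the inverse function theorem). Applying Lemma~\ref{lm:l_ab_homeo} I obtain: $q$ is $C^1$ at $0$ exactly when $f'(0)>0$ and $ab=1$, or $f'(0)<0$ and $a=b$; and, granting this, $q$ is $C^k$ exactly when $f^{(i)}(0)=0$ for $2\leq i\leq k$, i.e.\ $f\in\JDiffRz$ (for $k=1$ this is vacuous since $\JDiffRz[1]=\DiffRz[1]$). In summary,
\[
   q\in\DiffRz\iff f\in\JDiffRz\ \text{and}\ \bigl[(f'(0)>0,\ ab=1)\ \text{or}\ (f'(0)<0,\ a=b)\bigr].
\]

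Now I split by $(a,b)$. If $a\neq b$ and $ab\neq1$, the bracketed condition never holds, so $\DiffRz\cap\stlin{b}\DiffRz\stlin{a}=\varnothing$. If $ab=1$ (which forces $a\neq b$, as $a=b$ would give $a=1$), the condition reduces to $f'(0)>0$, whence $\DiffRz\cap\stlin{b}\DiffRz\stlin{a}=\{\stlin{b}\circ f\circ\stlin{a}\mid f\in\JDiffRzOrPres\}$. If $a=b$, it reduces to $f'(0)<0$, whence $\DiffRz\cap\stlin{a}\DiffRz\stlin{a}=\{\stlin{a}\circ f\circ\stlin{a}\mid f\in\JDiffRzOrRev\}$. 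It remains to identify these two sets with $\JDiffRzOrPres$ and $\JDiffRzOrRev$ respectively.

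For the inclusion $\subseteq$ I write $f(x)=\tau x+r(x)$ with $\tau=f'(0)$, so $r=f-\tau\cdot\id_{\bR}\in C^k(\bR)$ and $r^{(i)}(0)=0$ for $0\leq i\leq k$ (here $f\in\JDiffRz$ is used); recall that $f$ maps each of $(-\infty;0],[0;+\infty)$ onto itself if $\tau>0$ and interchanges them if $\tau<0$. Substituting into $\stlin{b}\circ f\circ\stlin{a}$ and using $ab=1$ (resp.\ $a=b$) then yields $q(x)=\tau x+\rho(x)$ (resp.\ $q(x)=a\tau x+\rho(x)$), where $\rho$ is glued from $r(x)$ on $x\leq0$ and $b\,r(ax)$ on $x\geq0$ (resp.\ from $a\,r(x)$ and $r(ax)$); on each half-line $\rho$ is $C^k$ with all derivatives up to order $k$ vanishing at $0$, so the glued $\rho$ lies in $C^k(\bR)$ and is flat to order $k$ at $0$, giving $q\in\JDiffRz$, and the sign of the leading coefficient places $q$ in $\JDiffRzOrPres$ (resp.\ $\JDiffRzOrRev$). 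For the inclusion $\supseteq$, given $q\in\JDiffRzOrPres$ (resp.\ $\JDiffRzOrRev$) I set $f:=\stlin{b}^{-1}\circ q\circ\stlin{a}^{-1}$; since $\stlin{c}^{-1}=\stlin{1/c}$ and $1/b=a$, $1/a=b$ when $ab=1$ (and simply $1/a\neq1$ when $a=b$), this $f$ has the same shape treated under $\subseteq$, hence $f\in\JDiffRz\subseteq\DiffRz$ and $q=\stlin{b}\circ f\circ\stlin{a}$ lies in the double coset. The one delicate point --- and the only place the hypothesis $a,b\neq1$ is genuinely used, through Lemma~\ref{lm:l_ab_homeo} --- is this passage from a proper subset to the full group $\JDiffRz^{\pm}$: the normalisation $ab=1$ makes the one-sided derivatives $q'(0^-)=\tau$ and $q'(0^+)=ab\,\tau=\tau$ coincide, forcing $q-\tau\cdot\id_{\bR}$ to be flat to order $k$ from both sides, and the case $a=b$ is analogous with $q-a\tau\cdot\id_{\bR}$.
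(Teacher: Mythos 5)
Your proposal is correct and follows essentially the same route as the paper's proof: Lemma~\ref{lm:l_ab_homeo} gives the case analysis and the criterion $q=\stlin{\bx}\circ\func\circ\stlin{\ax}\in\DiffRz\Leftrightarrow\func\in\JDiffRz$ with the appropriate sign of $\func'(0)$, the explicit formulas~\eqref{equ:q_dfb_tau_pos}--\eqref{equ:q_dfb_tau_neg} show the image lands in $\JDiffRzOrPres$ (resp.\ $\JDiffRzOrRev$), and the reverse inclusion is obtained exactly as in the paper by conjugating with $\stlin{\inv{\ax}},\stlin{\inv{\bx}}$ and reapplying the forward computation. (Only your closing aside is slightly off: the hypothesis $\ax,\bx\ne1$ is really used in the forward direction, via Lemma~\ref{lm:l_ab_homeo}\ref{enum:lm:l_ab_homeo:cn}, to force flatness, not in the passage to the full group --- but this does not affect the argument.)
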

\begin{proof}
1) If $\ax\ne \bx,\inv{\bx}$, then by Lemma~\ref{lm:l_ab_homeo} for any $\func\in\DiffRz$ the homeomorphism $\qdif=\stlin{\bx} \circ\func\circ \stlin{\ax}$ is not even $\Cr{1}$.
In particular, $\DiffRz \cap \stlin{\bx} \DiffRz \stlin{\ax}=\varnothing$.

2) Suppose $\ax=\inv{\bx}$.
Again, let $\func\in\DiffRz$ and $\qdif=\stlin{\bx} \circ\func\circ \stlin{\ax}$.

If $\func\in\DiffRzOrRev$, then by~\ref{enum:lm:l_ab_homeo:tau_neg}, $\qdif$ is not $\Cr{1}$ since $\ax\ne\bx$.
On the other hand, if $\func\in\DiffRzOrPres$, then by~\ref{enum:lm:l_ab_homeo:tau_pos} and~\ref{enum:lm:l_ab_homeo:cn}, $\qdif$ is $\Ck$ iff $\func\in\JDiffRzOrPres$.
In that case, it also follows from~\eqref{equ:q_dfb_tau_pos} that $\qdif\in\JDiffRzOrPres$.
In other words,
\[
    \DiffRz \cap \stlin{\bx} \DiffRz \stlin{\ax}
    \,=\,
    \DiffRz \cap \stlin{\bx} \JDiffRzOrPres \stlin{\ax}
    \,\subset\,
    \JDiffRzOrPres.
\]

To prove the inverse inclusion take any $\bar{\qdif}\in\JDiffRzOrPres$, and let $\bar{\func} = \stlin{\inv{\bx}} \circ\bar{\qdif}\circ \stlin{\inv{\ax}}$.
Since $\inv{\ax} = 1/(\inv{\bx})$, the already proved part of case 2) applied to $\bar{\qdif}$ implies that $\bar{\func} \in \JDiffRzOrPres$.
Hence,
\[
    \bar{\qdif} = \stlin{\bx} \circ\bar{\func}\circ \stlin{\ax}\in \DiffRz \cap \stlin{\bx} \JDiffRzOrPres \stlin{\ax}.
\]

3) The proof that $\DiffRz \cap \stlin{\bx} \DiffRz \stlin{\ax} = \JDiffRzOrRev$ for $\ax=\bx\ne 1$ is similar.
\end{proof}

\section{Proof of Theorem~\ref{th:ck_structs_examples}}\label{th:proof:th:ck_structs_examples}
Denote by $\trStruct{\ax}$ the $\Ck$-structure on $\DLine$ corresponding to the special minimal atlas $\MinAtlas{\stlin{\ax}}$ of the homeomorphism $\stlin{\ax}$.
We need to describe the sets of diffeomorphisms between $\Ck$-structures $\trStruct{\ax}$ and $\trStruct{\bx}$ for all $\ax,\bx>0$.
Let
\begin{align*}
    &\MinAtlas{\stlin{\ax}} = \{(\Na, \idU), (\Nb, \stlin{\ax}\circ\idV) \}, &
    &\MinAtlas{\stlin{\bx}} = \{(\Na, \idU), (\Nb, \stlin{\bx}\circ\idV) \},
\end{align*}
be special minimal atlases of $\stlin{\ax}$ and $\stlin{\bx}$ respectively.
Then by Theorem~\ref{th:Dfix_Dex} we have the following bijections:
\begin{align*}
    \bijDLxOFix\colon&\DiffLxOFix[\trStruct{\ax},\trStruct{\bx}] \to
        \DiffRz \cap \stlin{\bx} \DiffRz \stlin{\ax}^{-1},
        & \bijDLxOFix(\dif) &=  \stlin{\bx}\circ\idV\circ\dif\circ\idV^{-1}\circ\stlin{\ax}^{-1}, \\
    \bijDLxOEx\colon&\DiffLxOFix[\trStruct{\ax},\trStruct{\bx}] \to
        \DiffRz \cap \stlin{\bx} \DiffRz \stlin{\ax},
        & \bijDLxOEx(\dif) &= \idU\circ\dif\circ\idV^{-1}\circ\stlin{\ax}^{-1}.
\end{align*}
given by the lower arrows of the corresponding diagrams:
\begin{align*}
    &\xymatrix@C=5em{
        \Nb \ar[r]^-{\dif} \ar[d]_-{\stlin{\ax}\circ\idV} & \Nb \ar[d]^-{\stlin{\bx}\circ\idV} \\
        \bR \ar[r]^-{\bijDLxOFix(\dif)} & \bR
    }
    &&
    \xymatrix@C=5em{
        \Nb \ar[r]^-{\dif} \ar[d]_-{\stlin{\ax}\circ\idV} & \Na \ar[d]^-{\idU} \\
        \bR \ar[r]^-{\bijDLxOEx(\dif)} & \bR
    }
\end{align*}
and thus associating to each $\dif\in\DiffLx[\trStruct{\ax},\trStruct{\bx}]{}$ its local presentation from chart $\Nb$.
Moreover, Corollary~\ref{cor:D_cap_wb_D_wa} gives an explicit description of the images of $\bijDLxOFix$ and $\bijDLxOEx$.

\ref{enum:th:ck_structs_examples:a_ne_b_1b}
If $\ax\not=\bx,\inv{\bx}$, then by Corollary~\ref{cor:D_cap_wb_D_wa} the intersections $\DiffRz \cap \stlin{\bx} \DiffRz \stlin{\ax}^{\pm1}$ are empty, whence $\DiffLx[\trStruct{\ax},\trStruct{\bx}]{}=\varnothing$ as well.

\ref{enum:th:ck_structs_examples:a_1}
Let $\ax=1$, so $\stlin{\ax} = \id_{\bR}$.
Then
\[
    \stlin{1} \DiffRz \stlin{1}^{-1} = \DiffRz \cap \stlin{1} \DiffRz \stlin{1} = \DiffRz,
\]
whence we above bijections reduce to the following ones:
\begin{align*}
   &\bijDLxOFix\colon \DiffLxOFix[\trStruct{1}] \to \DiffRz, &
   &\bijDLxOEx \colon \DiffLxOEx[\trStruct{1}] \to \DiffRz.
\end{align*}

Let $\invol\colon\DLine\to\DLine$ be the involution exchanging $\pta$ and $\ptb$ and fixed on $\Rzp$.
One easily checks $\invol$ is a self-diffeomorphism of $\DiffLxOEx[\trStruct{1}]$.
Namely, the local presentations of $\invol$ with respect to the special minimal atlas $\MinAtlas{\stlin{1}} = \{(\Na, \idU), (\Nb, \stlin{1}\circ\idV = \idV) \}$ are given by the following commutative diagrams in which both lower arrows are $\id_{\bR}$:
\begin{align*}
    &\xymatrix@R=4ex@C=12ex{
        \Na \ar[r]^-{\invol}  \ar[d]_-{\idU} & \Nb \ar[d]^-{\idV} \\
        \bR \ar[r]^-{\id_{\bR}} & \bR
    }
    &
    &\xymatrix@R=4ex@C=12ex{
        \Nb \ar[r]^-{\invol}  \ar[d]_-{\idV} & \Na \ar[d]^-{\idU} \\
        \bR \ar[r]^-{\id_{\bR}} & \bR
    }
\end{align*}
In particular, $\bijDLxOEx(\invol) = \id_{\bR}$.

Moreover, as noted in Lemma~\ref{lm:HomeoL}, $\invol$ commutes with all homeomorphisms of $\DLine$, and in particular with $\DiffLxOFix[\trStruct{1}]$.
Hence the map $\eta\colon\DiffRz \times \bZ_{2} \to \DiffLx{}$, $\eta(\dif,\delta) \mapsto \invol^{\delta}\circ\dif$, is an isomorphism of groups.
It is aslo evident, that is the identities~\eqref{equ:eta_a=1} hold.

\ref{enum:th:ck_structs_examples:a_ne_1}
Suppose $\ax \ne 1$.
Note also that $\stlin{\ax}^{-1}=\stlin{\inv{\ax}}$.
Then by Theorem~\ref{th:Dfix_Dex} and Corollary~\ref{cor:D_cap_wb_D_wa} the above bijections reduce to the following ones
\begin{align*}
    \bijDLxOFix &\colon \DiffLxOFix[\trStruct{\ax}] \to \DiffRz \cap \stlin{\ax} \DiffRz \stlin{\inv{\ax}} = \JDiffRzOrPres,
    \\
    \bijDLxOEx  &\colon \DiffLxOEx[\trStruct{\ax}] \to \DiffRz \cap \stlin{\ax} \DiffRz \stlin{\ax} = \JDiffRzOrRev,
\end{align*}
which send preserving (resp.\ reversing) orientation diffeomorphisms of $\DLine$ to diffeomorphisms of $\bR$ of the same type.
Hence $\DiffLxOrRevxOFix[\trStruct{\ax}]=\DiffLxOrPresxOEx[\trStruct{\ax}] = \varnothing$, and
\[ \DiffLx[\trStruct{\ax}]{}=\DiffLxOrPresxOFix[\trStruct{\ax}]{}\sqcup\DiffLxOrRevxOEx[\trStruct{\ax}]{}. \]

Finally, consider the following homeomorphism $\pdif\colon\DLine\to\DLine$:
\[
\pdif(\px) =
    \begin{cases}
        -\px/\sqrt{\ax}, & \px\in(-\infty;0),\\
        \ptb,     & \px=\pta,\\
        \pta,     & \px=\ptb, \\
        -\px\sqrt{\ax}, & \px\in(0;\infty).
    \end{cases}
\]
It exchanges $\pta$ and $\ptb$ and one easily checks that $\pdif$ has order $2$.
We claim that $\pdif$ is the desired $\Ck$-diffeomorphism of $\trStruct{\ax}$ belonging to $\DiffLxOrRevxOEx[\trStruct{\ax}]$.

To check that, let us will compute local presentations of $\pdif$ in each of the charts of the special minimal atlas $\MinAtlas{\stlin{\ax}} = \{(\Na, \idU), (\Nb, \stlin{\ax}\circ\idV) \}$ of $\stlin{\ax}$.
Note that $\pdif$ exchanges the connected components of $\DLine\setminus\{\pta,\ptb\} = \Rzp$.
Then the local presentation of $\pdif$ with respect to the charts $(\Na, \idU)$ and $(\Nb, \stlin{\ax}\circ\idV)$ on the components $(-\infty;0)$ and $(0;\infty)$ are given by the following two diagrams:
\begin{align*}
    &\xymatrix@R=4ex@C=12ex{
        (-\infty;0) \ar[r]^-{\pdif(\px)=-\px/\sqrt{\ax}}  \ar[d]_-{\idU} & (0;+\infty) \ar[d]^-{\stlin{\ax}\circ\idV(\px) = \ax\px} \\
        (-\infty;0) \ar[r]^-{\px\mapsto -\sqrt{\ax}\px} & (0;+\infty)
    }
    &
    &\xymatrix@R=4ex@C=12ex{
        (0;\infty) \ar[r]^-{\pdif(\px)=-\sqrt{\ax}\px}  \ar[d]_-{\idU} & (-\infty;0) \ar[d]^-{\stlin{\ax}\circ\idV(\px) = \px} \\
        (0;\infty) \ar[r]^-{\px\mapsto -\sqrt{\ax}\px} & (-\infty;0)
    }
\end{align*}
The formulas for the lower arrows are the same, and thus the local presentation of $\pdif$ with respect to the charts $(\Na, \idU)$ and $(\Nb, \stlin{\ax}\circ\idV)$ is the map $\px \mapsto -\sqrt{\ax}\px$.

Similarly, the local presentation of $\pdif$ with respect to the charts $(\Nb, \stlin{\ax}\circ\idV)$ and $(\Na, \idU)$ on the components $(-\infty;0)$ and $(0;\infty)$ are given by:
\begin{align*}
    &\xymatrix@R=4ex@C=12ex{
        (-\infty;0) \ar[r]^-{\pdif(\px)=-\px/\sqrt{\ax}} \ar[d]_-{\stlin{\ax}\circ\idV(\px) = \px}
         & (0;+\infty)  \ar[d]^-{\idU} \\
        (-\infty;0) \ar[r]^-{\px\mapsto -\px/\sqrt{\ax}} & (0;+\infty)
    }
    &
    &\xymatrix@R=4ex@C=12ex{
        (0;\infty) \ar[r]^-{\pdif(\px)=-\sqrt{\ax}\px}  \ar[d]_-{\stlin{\ax}\circ\idV(\px) = \ax\px}
        & (-\infty;0) \ar[d]^-{\idU} \\
        (0;\infty) \ar[r]^-{\px\mapsto -\px/\sqrt{\ax}} & (-\infty;0)
    }
\end{align*}
Hence, the local presentation of $\pdif$ with respect to the charts $(\Nb, \stlin{\ax}\circ\idV)$ and $(\Na, \idU)$ is the map $\px \mapsto -\px/\sqrt{\ax}$.

Thus $\pdif$ is $\Ck$ in both charts of the atlas $\MinAtlas{\stlin{\ax}}$, and therefore it is a self-diffeomorphism of $\trStruct{\ax}$.

\ref{enum:th:ck_structs_examples:a_ainv}
Again if $\ax\not=1$, then by Theorem~\ref{th:Dfix_Dex} and Corollary~\ref{cor:D_cap_wb_D_wa} the above bijections reduce to the following ones
\begin{align*}
    \bijDLxOFix &\colon \DiffLxOFix[\trStruct{\ax},\trStruct{\inv{\ax}}] \to \DiffRz \cap \stlin{\inv{\ax}} \DiffRz \stlin{\ax}^{-1} = \JDiffRzOrRev,
    \\
    \bijDLxOEx  &\colon \DiffLxOEx[\trStruct{\ax},\trStruct{\inv{\ax}}] \to \DiffRz \cap \stlin{\inv{\ax}} \DiffRz \stlin{\ax} = \JDiffRzOrPres,
\end{align*}
which send preserving (resp.\ reversing) orientation diffeomorphisms of $\DLine$ to diffeomorphisms of $\bR$ of the same type.

Hence $\DiffLxOrPresxOFix[\trStruct{\ax},\trStruct{\inv{\ax}}] = \DiffLxOrRevxOEx[\trStruct{\ax},\trStruct{\inv{\ax}}] = \varnothing$, and
\[
    \DiffLx[\trStruct{\ax},\trStruct{\inv{\ax}}]{}
    =
    \DiffLxOrRevxOFix[\trStruct{\ax},\trStruct{\inv{\ax}}]{}
    \sqcup
    \DiffLxOrPresxOEx[\trStruct{\ax},\trStruct{\inv{\ax}}]{}.
\]

The latter statement on bijections of those sets with $\DiffLxOrPresxOFix[\trStruct{\ax}]$ follows from Lemma~\ref{lm:DiffM_DiffN}.
This completes Theorem~\ref{th:ck_structs_examples}.

\subsection*{Acknowledgement}
This work was supported by grants from the Simons Foundation (1030291, 1290607, S.I.M. and L.M.V.).


\end{document}